\documentclass[a4]{amsart}

\input xypic
\usepackage{amssymb}

\oddsidemargin 0.200 true in
\evensidemargin 0.200 true in
\marginparwidth 1 true in
\topmargin -0.5 true in
\textheight 9 true in
\textwidth 6.0 true in


\newtheorem{theorem}{Theorem}[section]
\newtheorem{proposition}[theorem]{Proposition}
\newtheorem{lemma}[theorem]{Lemma}
\newtheorem{corollary}[theorem]{Corollary}

\theoremstyle{definition}

\newtheorem{remark}[theorem]{Remark}

\newcommand{\g}{\ensuremath{\mathcal{G}}} 
\newcommand{\gk}{\ensuremath{\mathcal{G}_{k}}}

\newcommand{\cptwo}{\ensuremath{\mathbb{C}P^{2}}} 
\newcommand{\map}{\ensuremath{\mbox{Map}}} 
\newcommand{\mapstar}{\ensuremath{\mbox{Map}^{\ast}}} 
\newcommand{\mapcptwo}{\ensuremath{\mapstar(\cptwo,BSp(2))}} 


\newcommand{\hlgy}[1]{\ensuremath{H_{*}(#1)}}

\newcounter{bean}
\newenvironment{letterlist}{\begin{list}{\rm ({\alph{bean}})}
      {\usecounter{bean}\setlength{\rightmargin}{\leftmargin}}}
      {\end{list}}

\newcommand{\namedright}[3]{\ensuremath{#1\stackrel{#2}
 {\longrightarrow}#3}}
\newcommand{\nameddright}[5]{\ensuremath{#1\stackrel{#2}
 {\longrightarrow}#3\stackrel{#4}{\longrightarrow}#5}}
\newcommand{\namedddright}[7]{\ensuremath{#1\stackrel{#2}
 {\longrightarrow}#3\stackrel{#4}{\longrightarrow}#5
  \stackrel{#6}{\longrightarrow}#7}} 
\newcommand{\nameddddright}[9]{\ensuremath{#1\stackrel{#2}
 {\longrightarrow}#3\stackrel{#4}{\longrightarrow}#5
  \stackrel{#6}{\longrightarrow}#7\stackrel{#8}{\longrightarrow}#9}}

\newcommand{\larrow}{\relbar\!\!\relbar\!\!\rightarrow}

\newcommand{\lnamedright}[3]{\ensuremath{#1\stackrel{#2}
 {\larrow}#3}}
\newcommand{\lnameddright}[5]{\ensuremath{#1\stackrel{#2}
 {\larrow}#3\stackrel{#4}{\larrow}#5}}
\newcommand{\lnamedddright}[7]{\ensuremath{#1\stackrel{#2}
 {\larrow}#3\stackrel{#4}{\larrow}#5
  \stackrel{#6}{\larrow}#7}}

\newcommand{\qqed}{\hfill\Box}

\newcommand{\zmodtwo}{\ensuremath{\mathbb{Z}/2\mathbb{Z}}}

\newcommand{\zmodfour}{\ensuremath{\mathbb{Z}/4\mathbb{Z}}} 
\newcommand{\zmodeight}{\ensuremath{\mathbb{Z}/8\mathbb{Z}}}

\begin{document}


\title[$Sp(2)$-gauge groups]{The homotopy types of $Sp(2)$-Gauge groups over closed, 
         simply-connected four-manifolds} 
\author{Tseleung So}
\address{Mathematical Sciences, University
         of Southampton, Southampton SO17 1BJ, United Kingdom}
\email{tls1g14@soton.ac.uk}
\author{Stephen Theriault}
\address{Mathematical Sciences, University
         of Southampton, Southampton SO17 1BJ, United Kingdom}
\email{S.D.Theriault@soton.ac.uk}

\subjclass[2010]{Primary 55P15, Secondary 81T13.} 
\keywords{gauge group, simply-connected four-manifold, homotopy type}


\begin{abstract} 
We determine the number of distinct fibre homotopy types for the gauge groups of 
principal $Sp(2)$-bundles over a closed, simply-connected four-manifold. 
\end{abstract} 

\maketitle

\section{Introduction}
\label{sec:intro} 

Let $X$ be a pointed topological space, $G$ a topological group and 
\(\namedright{P}{}{X}\) 
a principal $G$-bundle. The \emph{gauge group} $\g(P)$ of $P$ is the 
group of $G$-equivariant automorphisms of $P$ that fix~$X$. The topology 
of gauge groups is of interest due to its connections with various moduli 
spaces~\cite{AB,BGG} and Donaldson Theory~\cite{D}. 

Considerable effort has gone into determining the homotopy types of 
gauge groups for specific groups $G$ and spaces $X$. Typically, $G$ 
and $X$ are chosen because of their interest in geometry or physics. 
In general, Crabb and Sutherland~\cite{CS} showed that, even if there are 
infinitely many inequivalent classes of principal $G$-bundles $P$ over $X$, 
there are only finitely many distinct homotopy types for their gauge groups. 
Precise enumerations of the homotopy types have been made 
in the following cases: $SU(2)$-bundles over $S^{4}$~\cite{K} or 
over a closed, simply-connected four-manifold~\cite{KT}; $SU(3)$-bundles 
over $S^{4}$~\cite{HK} or over a closed, simply-connected four-manifold~\cite{Th3}; 
$SU(5)$-bundles over~$S^{4}$~\cite{Th4}; $SO(3)$-bundles over 
$S^{4}$~\cite{KKKT}; and $Sp(2)$-bundles over $S^{4}$~\cite{Th2}. 
Substantial information has also been obtained for $SU(4)$-bundles 
over $S^{4}$~\cite{CT}; $Sp(3)$-bundles over $S^{4}$~\cite{C}; and 
$G_{2}$-bundles over $S^{4}$~\cite{KTT}. 

In this paper we consider the homotopy types of the gauge groups of 
principal $Sp(2)$-bundles over a closed, simply-connected four-manifold. 
It is well known that the principal $Sp(2)$-bundles over a closed 
simply-connected four-manifold $M$ are classified by their second Chern class, 
which can take any integer value. Let 
\(\namedright{P_{k}}{}{M}\) 
be the principal $Sp(2)$-bundle classified by the integer $k$, and 
let~$\gk(M)$ be its gauge group. We say that $\gk(M)$ is fibre homotopy 
equivalent to $\g_{\ell}(M)$ if there is a homotopy commutative diagram 
\[\spreaddiagramcolumns{-1pc}
  \diagram   
       \gk(M)\rrto^-{\simeq}\drto & & \g_{\ell}(M)\dlto \\ 
       & G. &
  \enddiagram\] 
For integers $a$ and $b$ let $(a,b)$ be their 
greatest common denominator. 

\begin{theorem} 
   \label{count} 
   Let $M$ be a closed, simply-connected four-manifold. The following hold: 
   \begin{letterlist} 
      \item if $(40,k)=(40,\ell)$ then $\gk(M)$ is fibre homotopy equivalent to $\mathcal{G}_{\ell}(M)$ 
                when localized rationally or at any prime;  
      \item if $\gk(M)$ is fibre homotopy equivalent to $\mathcal{G}_{\ell}(M)$ then $(40,k)=(40,\ell)$.  
   \end{letterlist} 
\end{theorem}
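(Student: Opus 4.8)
The plan is to study the gauge groups via the standard fibration connecting them to the classifying space of $Sp(2)$. Recall that for a principal $G$-bundle $P_k \to M$, the classifying space $B\g_k(M)$ is the component of $\map(M, BG)$ containing the map inducing $P_k$, and there is an evaluation fibration $\map^*_k(M, BG) \to \map_k(M, BG) \to BG$ whose fibre is the based mapping space. Since $M$ is a closed, simply-connected four-manifold, its cofibre structure gives a cofibration $S^3 \to \bigvee_{i} S^2 \to M \to S^4 \to \cdots$, and for $G = Sp(2)$ one has $BSp(2)$ with $3$-connected skeleton, so $\map^*_k(M, BSp(2))$ decomposes (up to homotopy) into a product whose interesting factor is $\mapstar(\cptwo, BSp(2))$-like, controlled by $\Omega^4_0 BSp(2) \simeq \Omega^3 Sp(2)$ and a factor coming from the $S^4$-cell. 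The upshot, following the method of Kono--Theriault and Theriault, is that the homotopy type of $\g_k(M)$ is detected by the homotopy class of a map built from the $k$-fold sum of a fixed "boundary" map, so that comparing $\g_k(M)$ with $\g_\ell(M)$ reduces to comparing $k$ and $\ell$ in a finite cyclic group whose order is the order of a particular Samelson product in $\pi_*(Sp(2))$ or related group.

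First I would set up the fibration-sequence machinery precisely: identify $B\g_k(M)$ as the appropriate component, take based loops, and use the fact that $\g_k(M) \simeq \map_k(M, BSp(2))$-fibre shifted to get a fibration $\g_k(M) \to \map^*_k(M, BSp(2)) \to Sp(2)$, or better, the comparison with $\g_k(S^4)$ whose homotopy types are already enumerated in \cite{Th2}. The key classical input is that $\pi_4(BSp(2)) = \pi_3(Sp(2)) \cong \mathbf{Z}$, so the adjoint of the classifying map for $P_k$ is $k$ times a generator, and the connecting map $\partial_k \colon Sp(2) \to \map^*_k(M, BSp(2))$ is (up to the action on components) the composite of $k \cdot \partial_1$ with translation. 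Then $\g_k(M)$ is the homotopy fibre of $\partial_k$, and two such fibres are fibre homotopy equivalent over $Sp(2)$ precisely when the maps $\partial_k, \partial_\ell$ agree up to sign and self-homotopy-equivalence, which by a counting/order argument happens iff $k \equiv \pm\ell$ in $\mathbf{Z}/N\mathbf{Z}$ where $N$ is the order of $\partial_1$ composed into the relevant homotopy group.

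Next I would compute that order $N$. This is where $\cptwo$ and $BSp(2)$ enter: using the splitting of $\mapstar(M, BSp(2))$ and the cofibration $\cptwo \to M$ or $M \to \cptwo$-type retractions available for simply-connected $4$-manifolds, the order of $\partial_1$ is governed by the order of a Samelson product $\langle \iota, \epsilon \rangle$ in $\pi_7(Sp(2))$ (where $\iota$ generates $\pi_3(Sp(2))$ and $\epsilon$ generates $\pi_7(Sp(2)) \cong \mathbf{Z} \oplus \mathbf{Z}/2$ or similar) together with a contribution from $\pi_6(Sp(2)) \cong \mathbf{Z}/2$. The arithmetic should yield $40 = 8 \cdot 5$, matching the statement; the factor of $8$ and the factor of $5$ come from the two nontrivial torsion/divisibility phenomena in $\pi_*(Sp(2))$ through dimension $7$. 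Concretely, I expect to invoke known generators and relations in $\pi_*(Sp(2))$ (from Mimura--Toda or similar), and to show the relevant self-map "multiplication by $k$" on the fibre has order exactly $40$ on homotopy, giving part (b), while part (a) follows by explicitly constructing the fibre homotopy equivalence when $(40,k) = (40,\ell)$ using that a unit mod $40$ lifts to a self-equivalence.

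The main obstacle I anticipate is the precise computation of the order $N = 40$: this requires pinning down the image of the composite $Sp(2) \xrightarrow{k} Sp(2) \to \mapstar(M, BSp(2))$-adjoint in the relevant (possibly unstable) homotopy groups, keeping careful track of both the $S^4$-cell contribution (which is essentially the $S^4$ case from \cite{Th2}, known to give order $40$) and the contribution from the lower ($2$-dimensional) cells of $M$, and showing the latter introduces no new divisibility constraints. Handling the general $4$-manifold $M$ rather than just $S^4$ means controlling how the attaching map $S^3 \to \bigvee S^2$ and the intersection form affect the mapping space splitting; I expect this to be manageable because $BSp(2)$ is $3$-connected, so maps out of the $2$-skeleton are null and the only surviving data is the top cell together with the action of $\pi_2$, but verifying that the fibre homotopy type depends on $k$ only through $(40,k)$ — and not on finer invariants of $M$ — is the crux and will require the homotopy-commutative-triangle argument (the group $\g$ in the displayed diagram being $Sp(2)$ itself, or $\map_k$) to be carried out with care. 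I would also need to separate the "$p=2$" and "$p=5$" localizations and treat the rational case (where all $\g_k(M)$ are equivalent) independently, then reassemble.
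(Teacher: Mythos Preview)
Your framework is correct in outline---the evaluation fibration, $\partial_k \simeq k\circ\partial_1$, and reduction to the order of $\partial_1$---and matches how the paper handles part~(a). But there is a genuine gap in your plan for part~(b), and a related underestimation of the non-Spin case.

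\textbf{The logic for part~(b) is backwards.} You write that showing ``multiplication by $k$ on the fibre has order exactly $40$'' gives part~(b). It does not: knowing $\partial_1$ has order $40$ tells you $\partial_k$ depends only on $k \bmod 40$, which feeds into part~(a). For part~(b) you must extract, from the (fibre) homotopy type of $\gk(M)$ alone, an invariant that recovers $(40,k)$. The paper does this in two stages: first it computes $[A,B\gk(\cptwo)]\cong\mathbb{Z}/(4,k)\mathbb{Z}$ where $A$ is the $7$-skeleton of $Sp(2)$, and shows via a cokernel argument that a mere homotopy equivalence $\gk(\cptwo)\simeq\g_\ell(\cptwo)$ forces $(4,k)=(4,\ell)$; combined with odd-primary information this gives $(20,k)=(20,\ell)$. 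The final factor of $2$ (distinguishing $(40,k)=4$ from $(40,k)=8$) is obtained only under the stronger \emph{fibre} homotopy hypothesis, by showing that $\overline{\partial}_4$ is essential while $\overline{\partial}_8$ is null, so that a fibre equivalence cannot link the two. Your proposal contains no mechanism for producing such invariants of $\gk$. Also, your condition ``$k\equiv\pm\ell$ in $\mathbb{Z}/N\mathbb{Z}$'' is not the right one; the theorem concerns $(N,k)=(N,\ell)$, which is strictly weaker.

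\textbf{The $\cptwo$ case is not absorbed by the $S^4$ case.} You argue that since $BSp(2)$ is $3$-connected the $2$-cells contribute nothing, so the order computation reduces to the $S^4$ case from \cite{Th2}. The decomposition $\gk(M)\simeq\gk(\cptwo)\times\prod\Omega^2 Sp(2)$ (non-Spin case) is correct, but the boundary map $\overline{\partial}_1\colon Sp(2)\to\mapstar_0(\cptwo,BSp(2))$ lands in a space that is \emph{not} an $H$-space, so ``order'' must be interpreted as the least $n$ with $\pi^\ast\circ n\circ\partial_1\simeq\ast$. A~priori this could be $20$ rather than $40$; the paper devotes two full sections to computing $\pi_3$, $\pi_6$, $\pi_9$ of $\mapstar(\cptwo,Sp(2))$ and analyzing composites with $\nu_4$, $\nu'$, and $2\nu_4^2$ to rule this out. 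Your sketch does not engage with this, and without it the $2$-primary part of (b) for non-Spin $M$ is unproven.
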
 

Two comments should be made. First, existing statements enumerating the 
homotopy types of gauge groups are phrased in terms of homotopy equivalence 
rather than fibre homotopy equivalence. A fibre homotopy equivalence between 
$\gk(M)$ and $\g_{\ell}(M)$ is stronger than a homotopy equivalence, so part~(a) 
implies a corresponding homotopy equivalence statement. In part~(b) the 
stronger condition of fibre homotopy equivalence is used. In Theorem~\ref{Acount} 
we show that if $\gk(M)$ is homotopy equivalent to $\mathcal{G}_{\ell}(M)$ then 
$(20,k)=(20,\ell)$. However, it is not clear if this can be improved to $(40,k)=(40,\ell)$. 
Second, an interesting feature of the 
proof of Theorem~\ref{count}~(b) involves showing that the boundary map 
in a fibration involving the classifying space $B\gk(M)$ has ``order" $40$ 
(what is meant by ``order" is described precisely following Proposition~\ref{cptwomethod}). 
This is interesting because the analogous boundary map in the case of gauge 
groups for principal $SU(2)$ or $SU(3)$-bundles over~$M$ has its ``order"  
vary depending on whether $M$ is Spin or non-Spin. In the $Sp(2)$-case 
the ``order"  is independent of the existence of a Spin structure. One 
wonders if this holds more generally.

\section{Framing the problem} 
\label{sec:frame} 

This section summarizes what is known and what remains to be proved 
in Theorem~\ref{count}. In general, let $G$ be a simply-connected, simple 
compact Lie group and let $M$ be a closed, simply-connected four-manifold. 
Since $[M,BG]\cong\mathbb{Z}$ the principal $G$-bundles over $M$ are 
classified by the second Chern class of $M$. Let 
\(\namedright{P_{k}}{}{M}\) 
be a principal $G$-bundle whose second Chern class is $k\in\mathbb{Z}$. 
Let $\gk(M)$ be the gauge group of $P_{k}$. 

The following decomposition was proved for the Spin case in~\cite{Th1} and for the 
non-Spin case in~\cite{So}. 

\begin{theorem} 
   \label{Mgaugedecomp} 
   Let $G$ be a simply-connected, simple compact Lie group and let $M$ 
   be a closed, simply-connected four-manifold with $H^{2}(M;\mathbb{Z})$ 
   of rank $d\geq 1$.  
   \begin{letterlist} 
      \item If $M$ is Spin then there is an integral homotopy equivalence 
                \[\gk(M)\simeq\gk(S^{4})\times\prod_{i=1}^{d}\Omega^{2} G.\] 
      \item If $M$ is non-Spin then there is an integral homotopy equivalence 
                \[\gk(M)\simeq\gk(\cptwo)\times\prod_{i=1}^{d-1}\Omega^{2} G.\] 
   \end{letterlist} 
\end{theorem}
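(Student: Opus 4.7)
The plan is to compute $\gk(M)$ via the classifying space $B\gk(M)\simeq\map_{k}(M,BG)$ by combining the evaluation fibration with a cellular analysis of $M$. Since $M$ is a closed simply-connected four-manifold with $H^{2}(M;\mathbb{Z})\cong\mathbb{Z}^{d}$, a standard CW presentation writes $M\simeq(\bigvee_{d}S^{2})\cup_{\phi}e^{4}$ with attaching map $\phi\in\pi_{3}(\bigvee_{d}S^{2})$. By Hilton's theorem $\phi=\sum_{i}a_{i}\eta_{i}+\sum_{i<j}b_{ij}[\iota_{i},\iota_{j}]$, and since $[\iota,\iota]=2\eta$ the coefficients $a_{i}$ can be absorbed into Whitehead products precisely when every $a_{i}$ is even, which is the Spin condition on the intersection form. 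In the non-Spin case an odd $\eta_{i}$-component survives and is handled by isolating a $\cptwo$ summand.

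Next I would extract the relevant fibration. The cofibration $\bigvee_{d}S^{2}\to M\to S^{4}$ yields, on applying $\map^{*}(-,BG)$, the fibration
\[ \Omega^{3}G\longrightarrow\map^{*}_{k}(M,BG)\longrightarrow(\Omega G)^{d} \]
with connecting map $\phi^{*}\colon(\Omega G)^{d}\to\Omega^{2}G$ induced by $\phi$, nested inside the evaluation fibration $\map^{*}_{k}(M,BG)\to B\gk(M)\to BG$. If $\phi^{*}\simeq *$, the displayed fibration splits as a product. The pinch map $q\colon M\to S^{4}$, which is degree one on the top cell and hence preserves the second Chern class $k$, induces a compatible map $B\gk(S^{4})\to B\gk(M)$ over $BG$. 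Splicing these together and looping then produces the Spin statement $\gk(M)\simeq\gk(S^{4})\times(\Omega^{2}G)^{d}$. For the non-Spin case I would first rewrite $M$ so that one $2$-cell together with the $4$-cell is grouped into a $\cptwo$ summand and the remaining attaching map $\phi'$ is a pure sum of Whitehead products, then apply the analogous argument to obtain $\gk(M)\simeq\gk(\cptwo)\times(\Omega^{2}G)^{d-1}$.

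The core technical step, and the main obstacle, is the null-homotopy of $\phi^{*}$. The key input is that Whitehead products suspend to zero, so $\Sigma\phi\simeq *$ and hence $\Sigma M\simeq(\bigvee_{d}S^{3})\vee S^{5}$ in the Spin case. I would transfer this suspension splitting into the mapping space via the adjunction $\map^{*}(A,\Omega B)=\map^{*}(\Sigma A,B)$, factoring $\phi^{*}$ through wedge summands arising from the splitting of $\Sigma M$ and showing each piece is null by identifying it with a Samelson product in $G$ that the splitting of $\Sigma M$ already contracts. However, $\Omega^{2}G$ supports non-trivial Samelson products in general, so the delicate work lies in pinpointing which wedge summands receive each Whitehead contribution of $\phi^{*}$ and assembling local null-homotopies coherently across the $(\Omega G)^{d}$ factor. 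In the non-Spin case there is the additional subtlety of choosing the $\cptwo$ summand to absorb exactly one odd $\eta$-contribution, requiring a change of basis in $H^{2}(M)$ and a check that the rewritten $\phi'$ is indeed a pure sum of Whitehead products. A secondary point is that the product splitting of $\map^{*}_{k}(M,BG)$ must descend compatibly to the splitting of the evaluation fibration over $BG$; this follows from naturality of evaluation together with the observation that $q^{*}$ acts as the identity on the $\Omega^{3}G$ fibre summand.
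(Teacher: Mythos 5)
First, note that the paper does not actually prove Theorem~\ref{Mgaugedecomp}: it quotes it, attributing the Spin case to~\cite{Th1} and the non-Spin case to~\cite{So}. So your argument has to stand on its own, and as written it has a genuine gap at its central step. You propose to split the fibration $\Omega^{3}G\to\mapstar_{k}(M,BG)\to(\Omega G)^{d}$ by showing that $\phi^{\ast}\colon(\Omega G)^{d}\to\Omega^{2}G$ is null homotopic, with key input $\Sigma\phi\simeq\ast$. But $\Sigma\phi\simeq\ast$ only gives $\Omega(\phi^{\ast})\simeq(\Sigma\phi)^{\ast}\simeq\ast$, i.e.\ a splitting \emph{after looping}; it says nothing about $\phi^{\ast}$ itself, and the adjunction $\mapstar(\Sigma A,B)\simeq\mapstar(A,\Omega B)$ cannot be applied to $\phi^{\ast}$ since $\phi$ is not a suspension. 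Concretely, precomposition with a Whitehead product $[\iota_{i},\iota_{j}]$ is, as a map of mapping spaces, the pairing $\Omega G\times\Omega G\to\Omega^{2}G$, $(f_{i},f_{j})\mapsto[f_{i},f_{j}]$, which factors through the smash and realizes Whitehead/Samelson pairings of $BG$; these are generally essential --- exactly the nontrivial Samelson products you flag as ``the main obstacle'' but never overcome. So the implication ``Spin $\Rightarrow\phi^{\ast}\simeq\ast\Rightarrow\mapstar_{k}(M,BG)\simeq(\Omega G)^{d}\times\Omega^{3}_{0}G$ before looping'' is unsubstantiated, and it is stronger than what the theorem asserts: the factors in the statement are $\Omega^{2}G$, one loop down from $\Omega G$, reflecting that the honest consequence of $\Sigma M\simeq(\bigvee_{d}S^{3})\vee S^{5}$ is a splitting of $\Omega\mapstar_{k}(M,BG)\simeq\mapstar(\Sigma M,BG)$. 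The published proofs work at that looped level and intertwine it with the evaluation fibration over $BG$, where the vanishing actually available is that of the connecting map into $(\Omega G)^{d}$ (Samelson products with $\pi_{1}(G)=0$ classes), rather than splitting the based mapping space itself.

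Two further steps would still need proof even granting your splitting. (i) To conclude $\gk(M)\simeq\gk(S^{4})\times(\Omega^{2}G)^{d}$ you must identify the homotopy fibre of $\overline{\partial}_{k}\colon G\to\mapstar_{k}(M,BG)$; this needs not only that its coordinate in $(\Omega G)^{d}$ is null (true, by the $\pi_{1}$ remark) but also that its other coordinate agrees with $\partial_{k}$, which requires the splitting to be compatible with $\pi^{\ast}$ --- more than ``naturality of evaluation'' plus the assertion that $q^{\ast}$ is the identity on the fibre summand. (ii) In the non-Spin case, pinching out $d-1$ two-spheres produces $S^{2}\cup_{a\eta}e^{4}$ where $a$ is the self-intersection of the retained basis class; to obtain $\cptwo$ you need a primitive class of square $\pm 1$, which oddness of the intersection form guarantees only with an extra argument (diagonalizability of odd indefinite forms, Donaldson's theorem in the smooth definite case), and the residual attaching map is then $\eta_{1}$ plus Whitehead products, so the Spin argument does not apply verbatim to the remaining spheres. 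None of these points is fatal to the overall strategy, but as it stands the proposal records the known reductions and leaves the decisive homotopy-theoretic step unproved.
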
 
\vspace{-1cm}~$\qqed$\bigskip 

Theorem~\ref{Mgaugedecomp} implies that to count the number of 
distinct homotopy types of the gauge groups $\gk(M)$ it suffices to 
do so in the special cases $\gk(S^{4})$ and $\gk(\cptwo)$. 
Further, at odd primes there is another decomposition, proved in~\cite{Th1}. 

\begin{theorem} 
   \label{oddpdecomp} 
   Let $G$ be a simply-connected, simple compact Lie group. Localize 
   rationally or at an odd prime $p$. Then there is a homotopy equivalence 
   \[\gk(\cptwo)\simeq\gk(S^{4})\times\Omega^{2} G.\] 
\end{theorem}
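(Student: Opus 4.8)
The plan is to reduce the statement to a known splitting of gauge groups over $S^4$ and then to use the fact that, at odd primes, $\cptwo$ has the same homotopy-theoretic behaviour as $S^4 \vee S^2$ in the relevant range. First I would recall the standard description of gauge groups in terms of mapping spaces: for a principal $G$-bundle $P_k$ over a space $X$, there is a homotopy equivalence $B\g_k(X) \simeq \map_k(X, BG)$, the component of the classifying map of $P_k$, and hence a fibration
\[
\g_k(X) \longrightarrow G \stackrel{\partial}{\longrightarrow} \map_k(X, BG)
\]
obtained by evaluating at the basepoint, whose connecting map identifies $\g_k(X)$ with the homotopy fibre of the evaluation fibration $\map_k(X,BG) \to BG$. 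Restricting to $X = \cptwo$ and $X = S^4$, the cofibration $S^2 \hookrightarrow \cptwo \to S^4$ (the attaching cell structure of $\cptwo$) induces a fibration sequence of mapping spaces, and correspondingly a relation between $\g_k(\cptwo)$ and $\g_k(S^4)$ with ``error term'' built from $\mapstar(S^2, BG) \simeq \Omega^2 BG \simeq \Omega^2 G$ (up to the component issue, which disappears since $\pi_2(BG) = 0$, so all components of these mapping spaces agree).

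Next I would set up the comparison more carefully. Smashing the cofibre sequence $S^2 \to \cptwo \to S^4$ against $BG$ in the appropriate mapping-space sense gives a homotopy pullback
\[
\xymatrix{
\g_k(\cptwo) \ar[r] \ar[d] & \g_k(S^4) \ar[d] \\
\ast \ar[r] & \mapstar(S^2, BG)
}
\]
or, dually, a fibration $\Omega^2 G \to \g_k(\cptwo) \to \g_k(S^4)$ whose classifying map $\g_k(S^4) \to B(\Omega^2 G) = \Omega G$ is induced by the inclusion of the bottom cell $S^2 \hookrightarrow \cptwo$. The splitting $\g_k(\cptwo) \simeq \g_k(S^4) \times \Omega^2 G$ will follow if I can show this fibration has a section, equivalently that the connecting map is null-homotopic. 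This connecting map factors through the map $BG = \map_0(\ast, BG) \to \mapstar(S^2, BG) = \Omega^2 BG$ adjoint to the (reduced) composite $S^2 \wedge G \to S^2 \wedge BG \to BG$ built from the Samelson-type product arising from the attaching map, twisted by $k$. The crucial point is that all the relevant obstruction lives in $[\g_k(S^4), \Omega^2 G] \cong [\Sigma^2 \g_k(S^4), G]$, and one computes that after localization at an odd prime $p$ (or rationally) this group, restricted to the piece that the connecting map hits, is zero — essentially because the bottom cell inclusion $S^2 \to \cptwo$ composed with the classifying data is divisible by quantities invertible at odd primes, or because the attaching map of the top cell of $\cptwo$ is the Hopf map $\eta$, which is $2$-torsion and hence trivial localized away from $2$.

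The main obstacle I expect is precisely this last vanishing: showing that the connecting map $\g_k(S^4) \to \Omega G$ (equivalently, the relevant Samelson/Whitehead product term) is null after inverting $2$. Concretely I would argue that the only obstruction to splitting comes from the attaching map $\eta \colon S^3 \to S^2$ of the top cell of $\cptwo$ — the homotopy pullback defining $\g_k(\cptwo)$ degenerates into a product exactly when the induced map $\Sigma \eta$-related term in the mapping space is trivial — and since $\eta$ has order $2$ in $\pi_3(S^2) \cong \mathbb{Z}$... (correction: $\eta$ generates $\pi_3(S^2) \cong \mathbb{Z}$, but the obstruction is a \emph{suspension} of a composite with $\eta$, landing in a $2$-primary torsion group) it vanishes upon localization at any odd prime and rationally. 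This is the step where I would need to be careful about which homotopy groups the obstruction actually lives in and to invoke the simple-connectivity and Lie-group structure of $G$ to control them; the rest of the argument is the formal manipulation of the evaluation fibration and the cofibre sequence for $\cptwo$, which is routine. I would also note that this theorem is already in the literature (\cite{Th1}), so in the write-up it suffices to cite it and sketch the mechanism rather than reprove it in full.
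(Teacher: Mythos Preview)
The paper does not prove this theorem at all: it is simply quoted from~\cite{Th1} with a $\qed$ box and no argument. Your final remark---that in the write-up it suffices to cite~\cite{Th1}---is therefore exactly what the paper does, and on that level your proposal matches the paper perfectly.

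Your sketch of the mechanism is in the right spirit: the only obstruction to the product decomposition comes from the attaching map $\eta$ of the top cell of $\cptwo$, and since $\Sigma\eta$ has order~$2$ the relevant maps vanish after localization away from~$2$. That is indeed the heart of the argument in~\cite{Th1}. However, the specific fibration you write down is oriented incorrectly. From the cofibration $S^{2}\to\cptwo\to S^{4}$ one obtains, via the evaluation fibrations and a $3\times 3$ diagram comparing $\partial_{k}$ with $\overline{\partial}_{k}=\pi^{\ast}\circ\partial_{k}$, a homotopy fibration
\[
\g_{k}(S^{4})\longrightarrow\g_{k}(\cptwo)\longrightarrow\Omega^{2}G,
\]
with $\g_{k}(S^{4})$ as the \emph{fibre} and $\Omega^{2}G$ as the \emph{base}, not the other way round. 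There is no natural map $\g_{k}(\cptwo)\to\g_{k}(S^{4})$ before one has already proved the splitting; the natural map (induced by the pinch $\cptwo\to S^{4}$) goes $\g_{k}(S^{4})\to\g_{k}(\cptwo)$. Accordingly your ``classifying map $\g_{k}(S^{4})\to\Omega G$'' is not the right object; what one actually needs is a section of $\g_{k}(\cptwo)\to\Omega^{2}G$, and the obstruction to that section is governed by the map $(\Sigma\eta)^{\ast}\colon\Omega^{2}G\to\Omega^{3}G$, which is null at odd primes since $\Sigma\eta$ is. With the fibration reoriented your outline becomes correct, and the resulting argument is essentially the one in~\cite{Th1}.
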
 
\vspace{-1cm}~$\qqed$\bigskip 

Theorem~\ref{oddpdecomp} implies that the only possible difference between the 
number of distinct homotopy types of the gauge groups $\gk(S^{4})$ and 
$\gk(\cptwo)$ occurs at the prime $2$. The homotopy types $\gk(S^{4})$ for 
$G=Sp(2)$ were determined in~\cite{Th2}. 

\begin{theorem} 
   \label{Sp2gauge} 
   Let $G=Sp(2)$. The following hold: 
   \begin{letterlist} 
      \item if $\gk(S^{4})\simeq\mathcal{G}_{\ell}(S^{4})$ then $(40,k)=(40,\ell)$; 
      \item if $(40,k)=(40,\ell)$ then $\gk(S^{4})\simeq\mathcal{G}_{\ell}(S^{4})$ 
                when localized rationally or at any prime. 
   \end{letterlist} 
\end{theorem}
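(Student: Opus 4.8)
The plan is to study $\gk(S^4)$ through the evaluation fibration on its classifying space. Recall that $B\gk(S^4)$ is the path-component $\map_k(S^4,BSp(2))$ of the classifying map of $P_k$, and that evaluating a map at the basepoint gives a fibration $\map^\ast_k(S^4,BSp(2))\to\map_k(S^4,BSp(2))\to BSp(2)$ with fibre $\Omega^4_0 BSp(2)\simeq\Omega^3_0 Sp(2)$ (all components of $\map^\ast(S^4,BSp(2))$ being homotopy equivalent). Its connecting map exhibits $\gk(S^4)$ as the homotopy fibre of a map $\partial_k\colon Sp(2)\to\Omega^3_0 Sp(2)$, and it is known that $\partial_k\simeq k\cdot\partial_1$ in the group $[Sp(2),\Omega^3_0 Sp(2)]$ (with addition coming from the loop-space target), where $\partial_1$ is the adjoint of the Samelson product $\langle\epsilon,1_{Sp(2)}\rangle$ and $\epsilon$ generates $\pi_3(Sp(2))\cong\mathbb{Z}$. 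So the statement reduces to (i) computing the order of $\partial_1$ in $[Sp(2),\Omega^3_0 Sp(2)]$, and (ii) understanding how the homotopy fibre of $k\partial_1$ depends on $k$.

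For part (b) the first step is to show that the order of $\partial_1$ divides $40$. Rationally, and $p$-locally for every prime $p\geq 7$, the relevant Samelson products vanish (because $\pi_6(S^3)$, $\pi_{10}(S^3)$ and $\pi_{10}(S^7)$ are then trivial), so $\partial_1$ is null. Using the principal bundle $S^3\to Sp(2)\to S^7$, the cell structure $Sp(2)\simeq S^3\cup e^7\cup e^{10}$, Mimura--Toda's calculation of $\pi_\ast(Sp(2))$, and the $5$-local splitting $Sp(2)\simeq S^3\times S^7$, one checks that the $2$-primary order of $\partial_1$ divides $8$, the $5$-primary order divides $5$, and the $3$-primary order is $1$; hence $40\partial_1\simeq\ast$. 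Now suppose $(40,k)=(40,\ell)$, equivalently $(8,k)=(8,\ell)$ and $(5,k)=(5,\ell)$. For each prime $p\mid 40$ elementary arithmetic produces an integer $a$ coprime to $p$ with $ak\equiv\ell\pmod{p^{v_p(40)}}$, so $p$-locally $\partial_\ell\simeq a\partial_k\simeq\mu_a\circ\partial_k$, where $\mu_a$ is the $a$-th power map of the $H$-space $\Omega^3_0 Sp(2)$. As $\mu_a$ is a $p$-local homotopy equivalence, the homotopy fibres of $\partial_k$ and $\partial_\ell$ agree $p$-locally; localized at any other prime or rationally both maps are null and both gauge groups are $Sp(2)\times\Omega^4_0 Sp(2)$. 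This yields part (b).

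For part (a) the point is that the order of $\partial_1$ is \emph{exactly} $40$ and that this is visible in the homotopy groups of $\gk(S^4)$. Since $\langle\epsilon,\epsilon\rangle\in\pi_6(Sp(2))=0$, the order of $\partial_1$ is carried not on the bottom cell of $Sp(2)$ but by the induced map on $\pi_7(Sp(2))\cong\mathbb{Z}$, which sends a generator $\epsilon_7$ to $\langle\epsilon,\epsilon_7\rangle\in\pi_{10}(Sp(2))\cong\mathbb{Z}/120$; I would show, prime by prime, that this element has order $40$ --- the $5$-local computation reducing, via $Sp(2)\simeq S^3\times S^7$, to identifying $\langle\epsilon,\epsilon_7\rangle$ with the generator of the $5$-torsion of $\pi_{10}(S^3)$, and the $2$-local computation using the non-split bundle over $S^7$. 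Because $\pi_6(Sp(2))=0$, the long exact sequence of the fibration then identifies $\pi_6(\gk(S^4))$ with the cokernel of $k\cdot\langle\epsilon,\epsilon_7\rangle$ in $\pi_{10}(Sp(2))$, which is cyclic of order a fixed multiple of $(40,k)$. Since this group is a homotopy invariant, a homotopy equivalence $\gk(S^4)\simeq\g_{\ell}(S^4)$ forces $(40,k)=(40,\ell)$, giving part (a).

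The main obstacle is step (i): pinning down the order of $\partial_1$, and in particular its exact $2$-primary order $8$, which requires a careful analysis of Samelson products in the non-split principal bundle $S^3\to Sp(2)\to S^7$ together with control of the $2$-torsion in $\pi_\ast(Sp(2))$. Once the orders are known, the arithmetic in part (b) and the invariant extraction in part (a) follow the pattern of Kono's original treatment of the $SU(2)$-case.
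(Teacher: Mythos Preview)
The paper does not prove this theorem; it is quoted from \cite{Th2} and marked with a bare~$\qqed$. So there is no in-paper argument to compare against, only the approach of \cite{Th2} as reflected elsewhere in the paper.

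Your outline follows the standard template (evaluation fibration, $\partial_{k}\simeq k\partial_{1}$, order of $\partial_{1}$, local homotopy equivalence of fibres) and that is indeed what \cite{Th2} does. Two remarks on where your sketch diverges from, or is thinner than, the argument the paper relies on.

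For part~(b), your claim that the $3$-primary order of $\partial_{1}$ is $1$ is correct but not as immediate as you suggest: $Sp(2)$ does \emph{not} split as $S^{3}\times S^{7}$ at $p=3$ (the attaching map $\nu'$ has order $12$), and $\pi_{10}(Sp(2))$ has nontrivial $3$-torsion. The vanishing of $\partial_{1}$ at $3$ comes instead from the cofibre description $Sp(2)/S^{3}\simeq S^{7}\vee S^{10}$ together with a direct check that the relevant Samelson products are $3$-locally null; this is carried out in \cite{Th2} and is not a one-line observation.

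For part~(a), your proposed invariant $\pi_{6}(\gk(S^{4}))\cong\mathrm{coker}\big((\partial_{k})_{\ast}\colon\pi_{7}(Sp(2))\to\pi_{10}(Sp(2))\big)$ works \emph{provided} you show that $(\partial_{1})_{\ast}$ sends a generator of $\pi_{7}(Sp(2))$ to an element of order exactly $40$. This is not the same as knowing the order of $\partial_{1}$ in $[Sp(2),\Omega^{3}_{0}Sp(2)]$: the map $S^{7}\to Sp(2)$ generating $\pi_{7}$ and the wedge summand $S^{7}\hookrightarrow S^{7}\vee S^{10}\simeq Sp(2)/S^{3}$ used to detect the order of $\partial_{1}$ are different maps, and one must check they see the same element of $\pi_{10}(Sp(2))$ up to units. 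The route taken in \cite{Th2} and \cite{CHM}, and mirrored in the present paper for $\cptwo$, avoids this by using the functor $[A,\,-\,]$ with $A$ the $7$-skeleton of $Sp(2)$: one shows $[A,\Omega^{3}Sp(2)]\cong\mathbb{Z}/8\mathbb{Z}$ is generated by $\partial_{1}\circ(\text{inclusion})$, so that $[A,B\gk(S^{4})]$ directly records $(8,k)$ at the prime $2$. Your $\pi_{6}$ approach can be made to work, but it needs the extra identification above; the $[A,\,-\,]$ approach is what the literature actually uses.
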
 
\vspace{-1cm}~$\qqed$\bigskip 

Collectively, Theorems~\ref{Mgaugedecomp}, \ref{oddpdecomp} and~\ref{Sp2gauge} 
imply that the only case we need to consider to complete the proof of 
Theorem~\ref{count} is that of $\gk(\cptwo)$ at the prime $2$. Resolving 
this case occupies the remainder of the paper.

\section{A method for counting the homotopy types of gauge groups} 
\label{sec:countingmethod}

Return to the case of any simply-connected, simple compact Lie group $G$, 
let $M=S^{4}$ or~$\cptwo$ and let $\map_{k}(M,BG)$ be the component 
of the space of continuous unbased maps from $M$ to~$BG$ which contains 
the map inducing $P_{k}$. Similarly, let $\mapstar_{k}(M,BG)$ be the space 
of continuous pointed maps from $M$ to $BG$ which contains the map 
inducing $P_{k}$. Observe that there is a fibration 
\(\nameddright{\mapstar_{k}(M,BG)}{}{\map_{k}(M,BG)}{ev}{BG}\),  
where $ev$ evaluates a map at the basepoint of~$M$. Let $B\gk(M)$ be the 
classifying space of $\gk(M)$. By~\cite{AB,G}, 
there is a homotopy equivalence $B\gk(M)\simeq\map_{k}(M,BG)$. 
The evaluation fibration therefore determines a homotopy fibration sequence 
\begin{equation} 
  \label{evfib}    
  \lnamedddright{G}{\overline{\partial}_{k}}{\mapstar_{k}(M,BG)}{} 
     {B\gk(M)}{ev}{BG} 
\end{equation}   
which defines the map $\overline{\partial}_{k}$. In the case $M=S^{4}$, 
write $\partial_{k}$ for $\overline{\partial}_{k}$. 

The evaluation fibration~(\ref{evfib}) satisfies a naturality property. The fact that 
the pinch map 
\(\namedright{\cptwo}{\pi}{S^{4}}\) 
to the top cell induces an isomorphism $[S^{4},BG]\cong [\cptwo,BG]$ implies 
that it induces a one-to-one correspondence between the components 
of $\map_{k}(S^{4},BG)$ and $\map_{k}(\cptwo,BG)$ and the components 
of $\mapstar_{k}(S^{4},BG)$ and $\mapstar_{k}(\cptwo,BG)$. Moreover, 
it is well known that there is a homotopy equivalence 
$\mapstar_{k}(S^{4},BG)\simeq\mbox{Map}^{\ast}_{0}(S^{4},BG)$ 
for every~$k$, and by~\cite{Su} there is also a compatible homotopy equivalence 
$\mapstar_{k}(\cptwo,BG)\simeq\mapstar_{0}(\cptwo,BG)$, Therefore, 
writing $\Omega^{3}_{0} G$ for $\mapstar_{0}(S^{4},BG)$, we obtain a homotopy 
fibration diagram 
\begin{equation} 
  \label{natdgrm} 
  \diagram 
     G\rto^-{\partial_{k}}\ddouble & \Omega^{3}_{0} G\rto\dto^{\pi^{\ast}}  
         & B\gk(S^{4})\rto^-{ev}\dto & BG\ddouble \\ 
     G\rto^-{\overline{\partial}_{k}} & \mapstar_{0}(\cptwo,BG)\rto 
         & B\gk(\cptwo)\rto^-{ev} & BG. 
  \enddiagram 
\end{equation}  

The key to understanding the homotopy types of $\gk(S^{4})$ and $\gk(\cptwo)$ 
is understanding the homotopy classes $\partial_{k}$ and $\overline{\partial}_{k}$. 
The adjoint of $\partial_{k}$ can be identified. Let  
\(i\colon\namedright{S^{3}}{}{G}\) 
be the inclusion of the bottom cell and let 
\(1\colon\namedright{G}{}{G}\) 
be the identity map. Lang~\cite{L} identified the homotopy class of the 
triple adjoint of $\partial_{k}$ as follows. 

\begin{lemma} 
   \label{Lang} 
   The adjoint of 
   \(\namedright{G}{\partial_{k}}{\Omega^{3}_{0} G}\) 
   is homotopic to the Samelson product 
   \(\lnamedright{\mbox{$S^{3}\wedge G$}}{\langle ki,1\rangle}{G}\). 
   Consequently, the linearity of the Samelson product implies that 
   $\partial_{k}\simeq k\circ\partial_{1}$.~$\qqed$ 
\end{lemma}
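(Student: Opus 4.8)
The plan is to work with the explicit clutching-function model of the gauge group over $S^{4}$ and to read off $\partial_{k}$ from the resulting homotopy pullback. Write $S^{4}=D^{4}_{+}\cup_{S^{3}}D^{4}_{-}$ and recall that $P_{k}$ is obtained from the trivial bundles over $D^{4}_{\pm}$ by clutching along a based map $\kappa\colon S^{3}\to G$ representing $k$ times the generator $[i]$ of $\pi_{3}(G)$, so that $\kappa\simeq ki$. A $G$-equivariant automorphism of $P_{k}$ covering the identity is a pair of maps $\phi_{\pm}\colon D^{4}_{\pm}\to G$ that agree over the overlap $S^{3}$ after conjugating one of them by $\kappa$. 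Since restriction to an interior point makes each $\map(D^{4}_{\pm},G)$ homotopy equivalent to $G$, this exhibits $\gk(S^{4})$ as the homotopy pullback of two maps $a,b\colon G\to\map(S^{3},G)$, where $a$ sends $g$ to the constant map at $g$ and $b$ sends $g$ to the map $x\mapsto\kappa(x)\,g\,\kappa(x)^{-1}$, both landing in the null component $\map(S^{3},G)_{0}$.

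Now evaluation at the basepoint of $S^{3}$ gives a split fibration $\Omega^{3}G\to\map(S^{3},G)\to G$ with section $a$, and hence an equivalence $\map(S^{3},G)_{0}\simeq\Omega^{3}_{0}G\times G$ carrying $a$ to $(\ast,1)$ and $b$ to $(\gamma,1)$, where $\ast$ is the constant loop, $1=\mathrm{id}_{G}$, and $\gamma\colon G\to\Omega^{3}_{0}G$ sends $g$ to the loop $x\mapsto\kappa(x)\,g\,\kappa(x)^{-1}g^{-1}$, namely the pointwise commutator. Cancelling the common $G$-factor in the homotopy pullback identifies $\gk(S^{4})$ with the homotopy fibre of $\gamma$, compatibly with the projections to $G$; comparing this with the fibre sequence \nameddright{\gk(S^{4})}{}{G}{\partial_{k}}{\mapstar_{k}(S^{4},BG)} obtained by extending~(\ref{evfib}) one step to the left, and with the identification $\mapstar_{k}(S^{4},BG)\simeq\map^{\ast}(S^{3},G)_{0}=\Omega^{3}_{0}G$, gives $\partial_{k}\simeq\gamma$. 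Unwinding this last adjunction, the triple adjoint of $\gamma$ is the map \namedright{S^{3}\wedge G}{}{G} sending $x\wedge g$ to $\kappa(x)\,g\,\kappa(x)^{-1}g^{-1}$, which is exactly the Samelson product $\langle\kappa,1\rangle\simeq\langle ki,1\rangle$. Finally, bilinearity of the Samelson product gives $\langle ki,1\rangle\simeq k\circ\langle i,1\rangle$, so $\partial_{k}\simeq k\circ\partial_{1}$.

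The main obstacle is the step ``$\partial_{k}\simeq\gamma$''. It is immediate that $\partial_{k}$ and $\gamma$ have the same homotopy fibre, namely $\gk(S^{4})$, but equal homotopy fibres do not by themselves force homotopic maps; one must check that the homotopy-pullback presentation of $\gk(S^{4})$ used above is compatible with the standard model~\cite{AB,G} of the evaluation fibration~(\ref{evfib}), i.e.\ promote the equivalence to one of the whole fibre sequence \namedright{G}{\partial_{k}}{\mapstar_{k}(S^{4},BG)} rather than of total spaces alone. (This is in essence the content of Lang's argument in~\cite{L}.) A secondary, purely bookkeeping, point is that the identification $\mapstar_{k}(S^{4},BG)\simeq\Omega^{3}_{0}G$ and the iterated adjunctions, together with the twist $S^{3}\wedge G\cong G\wedge S^{3}$, can introduce a sign; this is harmless, since $\langle i,1\rangle$ and $\langle 1,i\rangle$ differ only by a sign and $k$ runs over all of $\mathbb{Z}$.
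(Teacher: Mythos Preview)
The paper does not give its own proof of this lemma: it is stated with a reference to Lang~\cite{L} and closed with a $\Box$, so there is nothing to compare your argument against at the level of the paper itself. Your sketch is essentially a reconstruction of Lang's argument via the clutching model, and the outline is correct.

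You have also correctly located the only real subtlety. The identification $\partial_{k}\simeq\gamma$ is not a consequence of the two maps having equivalent homotopy fibres; it requires matching the pullback description of $\gk(S^{4})$ with the Atiyah--Bott/Gottlieb model $B\gk(S^{4})\simeq\map_{k}(S^{4},BG)$ so that the projection to $G$ in your pullback coincides (up to homotopy) with the loop of the evaluation map. Concretely, one checks that under the equivalence the map $\map_{k}(S^{4},BG)\to BG$ corresponds to restriction to the basepoint hemisphere, which on the clutching side is exactly the projection $\phi_{-}(0)$; this is what pins down the connecting map as $\gamma$ rather than merely some map with the same fibre. This is precisely the content of~\cite{L}, as you note, so citing Lang here (as the paper does) is the honest move unless you carry out that compatibility check in full. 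The sign/twist remark is fine and harmless for the stated consequence.
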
 

The following proposition, proved in~\cite{Th3}, uses the order of $\partial_{1}$ 
to estimate the number of homotopy types of the gauge groups $\gk(\cptwo)$. 

\begin{proposition} 
   \label{cptwomethod} 
   Suppose that the map 
   \(\namedright{G}{\partial_{1}}{\Omega^{3}_{0} G}\) 
   has order $m$. If $(m,k)=(m,\ell)$ then $\gk(\cptwo)$ is homotopy equivalent to 
   $\g_{\ell}(\cptwo)$ when localized rationally or at any prime.~$\qqed$ 
\end{proposition}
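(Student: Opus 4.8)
The plan is to mimic the standard strategy for comparing gauge groups via their classifying-space fibrations, as pioneered in the $SU(2)$ and $SU(3)$ cases. First I would fix the hypothesis: assume $\partial_{1}$ has order $m$, and suppose $(m,k)=(m,\ell)$. By Lemma~\ref{Lang} we know $\overline{\partial}_{k}\simeq\pi^{\ast}\circ\partial_{k}\simeq k\circ(\pi^{\ast}\circ\partial_{1})=k\circ\overline{\partial}_{1}$, where the first identification comes from the commutativity of the left square in~(\ref{natdgrm}). The fibration~(\ref{evfib}) with $M=\cptwo$ exhibits $B\gk(\cptwo)$ as the homotopy cofibre (up to the usual fibre-sequence bookkeeping) determined by $\overline{\partial}_{k}$, so the homotopy type of $B\gk(\cptwo)$ — and hence, by loop space considerations, of $\gk(\cptwo)$ — is governed by the homotopy class of $\overline{\partial}_{k}$ in $[G,\mapstar_{0}(\cptwo,BG)]$.

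The core step is the following number-theoretic observation about self-maps of $G$. Since $(m,k)=(m,\ell)$, there exist integers $a,b$ with $ak\equiv\ell$ and $b\ell\equiv k$ modulo $m$; moreover, because $k$ and $\ell$ have the same $\gcd$ with $m$, we may arrange $a$ and $b$ to be units modulo $m$ (choosing them coprime to $m$), so that the induced self-maps $a,b\colon G\to G$ are homotopy equivalences after restricting attention to the relevant order-$m$ phenomenon — more precisely, one shows the degree-$a$ map on $G$ becomes a homotopy equivalence once composed into a target where only the image of an order-$m$ element matters. I would then build a homotopy commutative square comparing $\overline{\partial}_{k}$ and $\overline{\partial}_{\ell}$: since $\overline{\partial}_{\ell}\simeq\ell\circ\overline{\partial}_{1}\simeq (ak)\circ\overline{\partial}_{1}\simeq a\circ(k\circ\overline{\partial}_{1})\simeq a\circ\overline{\partial}_{k}$, using that $\overline{\partial}_{1}$ is killed by $m$ so only $\ell\bmod m$ matters, and that the $H$-space structure on $G$ makes the degree maps behave additively on $\overline{\partial}_{1}$. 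This yields a diagram in which the left-hand maps $G\to G$ (degree $a$ and degree $b$) are mutually inverse up to the ambiguity absorbed by $\mapstar_{0}(\cptwo,BG)$, and the induced maps on the homotopy fibres $\mapstar_{0}(\cptwo,BG)$ are equivalences.

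From that square I would extend to a map of fibration sequences relating $B\gk(\cptwo)$ and $B\g_{\ell}(\cptwo)$: the five lemma (applied to the long exact sequences of homotopy groups, or Dold's theorem over each localization) then gives a homotopy equivalence $B\gk(\cptwo)\simeq B\g_{\ell}(\cptwo)$ locally at each prime and rationally. Looping down produces $\gk(\cptwo)\simeq\g_{\ell}(\cptwo)$ with the stated locality, and since the equivalence is compatible with the evaluation maps to $BG$ it is in fact a fibre homotopy equivalence, as needed downstream for Theorem~\ref{count}. I expect the main obstacle to be the bookkeeping that upgrades the numerical congruence $(m,k)=(m,\ell)$ into an honest homotopy commutative square with invertible vertical maps: one must be careful that the degree-$a$ and degree-$b$ self-maps of $G$ (which are genuinely not homotopy equivalences on the nose) only ever appear precomposed with a map of order $m$, so that their effect factors through the finite cyclic target and becomes invertible. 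This is precisely where the hypothesis that $\partial_{1}$ has order exactly $m$ is used, and handling it cleanly — rather than prime by prime — is the delicate part; the rest is a routine application of the fibre-sequence machinery already set up in~(\ref{evfib}) and~(\ref{natdgrm}).
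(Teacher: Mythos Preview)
The paper does not prove this proposition itself; it is quoted from~\cite{Th3} (hence the~$\qqed$ immediately following the statement). Your overall strategy---localise at a prime, use $(m,k)=(m,\ell)$ to produce a unit congruence $uk\equiv\ell\pmod{m}$, build a comparison square relating $\overline{\partial}_{k}$ to $\overline{\partial}_{\ell}$, and read off an equivalence of homotopy fibres---is exactly the approach taken there.

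There is, however, a genuine confusion in your execution at precisely the point you flag as delicate. You declare the degree maps to be self-maps $a,b\colon G\to G$ of the \emph{domain}, yet you write ``$\overline{\partial}_{\ell}\simeq a\circ\overline{\partial}_{k}$'', which places $a$ on the \emph{target} $\mapstar_{0}(\cptwo,BG)$. As the paper stresses immediately after the proposition, that target need not be an $H$-space, so no power map on it is available; and on the domain side the $a$-th power map on $G$ is not a $p$-local equivalence in general (think of squaring on $S^{3}$ at $p=2$), so your proposed remedy that it ``only ever appears precomposed with a map of order~$m$'' does not produce the commuting square with invertible verticals that you need. The correct home for the power map is the \emph{intermediate} loop space $\Omega^{3}_{0}G$ in the factorisation $\overline{\partial}_{k}=\pi^{\ast}\circ\partial_{k}=\pi^{\ast}\circ(k\cdot\partial_{1})$ coming from~(\ref{natdgrm}): because $\Omega^{3}_{0}G$ is a triple loop space, the $u$-th power map on it is a bona fide $p$-local self-equivalence whenever $u$ is a $p$-local unit, and one obtains $u\circ\partial_{k}\simeq\partial_{\ell}$ directly. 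The remaining work in~\cite{Th3} is to carry this equivalence through $\pi^{\ast}$ to an equivalence of the evaluation fibrations over~$BG$; that is what gives the fibre-homotopy upgrade recorded in Remark~\ref{cptwomethodremark}.
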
 

\begin{remark} 
\label{cptwomethodremark} 
In fact, the proof of Proposition~\ref{cptwomethod} in~\cite{Th3} shows a stronger 
result: if $(m,k)=(m,\ell)$ then $\gk(\cptwo)$ is fibre homotopy equivalent to 
$\g_{\ell}(\cptwo)$ when localized rationally or at any prime. 
\end{remark} 

With this we can prove Theorem~\ref{count}~(a). 

\begin{proof}[Proof of Theorem~\ref{count}~(a)] 
By Theorem~\ref{Mgaugedecomp}, it suffices to prove the statement 
for the special cases $M=S^{4}$ and $M=\cptwo$. The $M=S^{4}$ 
case is the statement of Theorem~\ref{Sp2gauge}. By~\cite{Th2} the map 
\(\namedright{Sp(2)}{\partial_{1}}{\Omega^{3}_{0} Sp(2)}\) 
has order $40$. Part~(a) now follows immediately from Proposition~\ref{cptwomethod} 
and Remark~\ref{cptwomethodremark}. 
\end{proof} 

While this proves the statement in Theorem~\ref{count}~(a) there is a more subtle 
question that needs to be addressed. The proof depended only on the order of 
\(\namedright{Sp(2)}{\partial_{1}}{\Omega^{3}_{0} Sp(2)}\) 
while not taking into account the composite 
\(\nameddright{Sp(2)}{\partial_{1}}{\Omega^{3}_{0} Sp(2)}{\pi^{\ast}}{\mapstar_{0}(\cptwo,BSp(2)}\). 
Since $\gk(\cptwo)$ is the homotopy fibre of $\overline{\partial}_{1}=\pi^{\ast}\circ\partial_{1}$, 
it is more natural to consider properties of this map. Notice, though, that it makes no sense 
to talk about the order of the map~$\overline{\partial}_{1}$ since $\mapstar_{0}(\cptwo,BG)$ 
need not be an $H$-space. Instead, the factorization of $\overline{\partial}_{1}$ 
through $\partial_{1}$ lets us consider the ``order" of $\overline{\partial}_{k}$, by which we 
mean the least integer $n$ such that the composite 
\(\namedddright{G}{\partial_{1}}{\Omega^{3}_{0} G}{n}{\Omega^{3}_{0} G}{} 
      {\mapstar_{0}(\cptwo,BG)}\) 
is null homotopic. Potentially, it is possible that the ``order" of $\overline{\partial}_{1}$ 
is $20$ instead of $40$, in which case it is more reasonable to expect 
Theorem~\ref{count}~(a) having a g.c.d. condition involving $20$ rather than $40$. 

In Theorem~\ref{maporder} we show that the ``order" of $\overline{\partial}_{1}$ 
is $40$. This also plays an important role in proving Theorem~\ref{count}~(b). 
The build-up to Theorem~\ref{maporder} requires several calculations which 
occupy the next two sections.

\section{Preliminary information for the ``order" of $\overline{\partial}_{1}$} 
\label{sec:prelim1} 

Recall that $\hlgy{Sp(2);\mathbb{Z}}\cong\Lambda(x_{3},x_{7})$ 
where $x_{i}$ has degree~$i$. So $Sp(2)$ can be given a $CW$-structure 
with three cells, one in each of the dimensions $3$, $7$ and $10$. Let 
\(i\colon\namedright{S^{3}}{}{Sp(2)}\) 
be the inclusion of the bottom cell. Let $A$ be the $7$-skeleton of $Sp(2)$, 
so $A$ has two cells, one each in dimensions $3$ and~$7$. Let 
\(j\colon\namedright{A}{}{Sp(2)}\) 
be the skeletal inclusion. There are homotopy cofibrations 
\[\nameddright{S^{6}}{f_{1}}{S^{3}}{}{A}\] 
\[\nameddright{S^{9}}{f_{2}}{A}{}{Sp(2)}\] 
for some maps $f_{1}$ and $f_{2}$. Following Toda's notation~\cite{To} 
for the homotopy groups of spheres, it is well known that $f_{1}=\nu'$, 
where $\nu'$ represents a generator of $\pi_{6}(S^{3})\cong\mathbb{Z}/12\mathbb{Z}$ 
(see~\cite{MT}, for example). Let $\nu_{4}$ represent  
the integral generator of $\pi_{7}(S^{4})\cong\mathbb{Z}\oplus\mathbb{Z}/12\mathbb{Z}$, 
and by definition let $\nu_{m}=\Sigma^{m-4}\nu_{4}$ for $m\geq 5$. In 
particular, $\nu_{4}^{2}=\nu_{4}\circ\nu_{7}$. James~\cite{J} showed 
that $\Sigma f_{2}$ factors as the composite 
\(\nameddright{S^{10}}{2\nu_{4}^{2}}{S^{4}}{}{\Sigma A}$. 

\begin{lemma} 
   \label{Sp2cofib} 
   There is a homotopy cofibration sequence 
   \[\lnamedddright{S^{3}}{i}{Sp(2)}{}{S^{7}\vee S^{10}}{\Sigma\nu'-g}{S^{4}}\] 
   where $g=2\nu_{4}^{2}$. 
\end{lemma}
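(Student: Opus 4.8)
The plan is to build the cofibration sequence of Lemma~\ref{Sp2cofib} by Spanier--Whitehead dualising, or rather by directly applying the Puppe construction to the two known cofibrations
\(\nameddright{S^{6}}{\nu'}{S^{3}}{}{A}\)
and
\(\nameddright{S^{9}}{f_{2}}{A}{}{Sp(2)}\),
and then identifying the connecting map in terms of the James result on $\Sigma f_2$. First I would record that the cofibration
\(\nameddright{S^{9}}{f_{2}}{A}{}{Sp(2)}\)
extends to the Puppe sequence
\(\namedddright{A}{}{Sp(2)}{\delta}{S^{10}}{-\Sigma f_{2}}{\Sigma A}\),
so that the cofibre of \(\namedright{S^{9}}{f_{2}}{A}\) is $Sp(2)$ and the next map $\delta\colon\namedright{Sp(2)}{}{S^{10}}$ is the pinch onto the top cell. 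The idea is to splice this with the (suspended) Puppe sequence of the first cofibration, which presents $\Sigma A$ as the cofibre of $\namedright{S^{7}}{\Sigma\nu'}{S^{4}}$, to obtain a description of $Sp(2)$ as a two-stage complex whose attaching data are controlled by $\nu'$ and $\Sigma f_2$.

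The key step is to assemble these into a single cofibration
\(\lnamedddright{S^{3}}{i}{Sp(2)}{}{S^{7}\vee S^{10}}{}{S^{4}}\).
Since the inclusion $\namedright{S^{3}}{i}{Sp(2)}$ is the bottom cell, its cofibre $Sp(2)/S^3$ has cells in dimensions $7$ and $10$; I would argue that $Sp(2)/S^3$ is homotopy equivalent to $S^7 \vee S^{10}$. This follows because the only obstruction to splitting off the top cell is the composite of the attaching map $\namedright{S^9}{f_2}{A}$ with the collapse $\namedright{A}{}{A/S^3=S^7}$, and by the James factorisation of $\Sigma f_2$ through $2\nu_4^2$ together with a desuspension/connectivity argument this composite is null — indeed $Sp(2)/S^3$ is a two-cell complex whose attaching map lives in $\pi_9(S^7)\cong\mathbb{Z}/2\mathbb{Z}$, and James's result pins it down after one suspension, which suffices here since $S^7$ is $6$-connected and the relevant stable range applies. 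Then the connecting map
\(\namedright{S^{7}\vee S^{10}}{}{S^{4}}\)
in the Puppe sequence of $\namedright{S^3}{i}{Sp(2)}$ is, on the first wedge summand, the suspension of the attaching map $\nu'$ of the $7$-cell of $Sp(2)$, hence $\Sigma\nu'$, and on the second wedge summand it is the composite carrying the $10$-cell around, which by James's theorem $\Sigma f_2\simeq (\namedright{S^{10}}{2\nu_4^2}{S^4}\to \Sigma A)$ is precisely $2\nu_4^2=g$ up to sign. Matching signs coming from the Puppe construction gives the map $\Sigma\nu' - g$ as stated.

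The main obstacle I expect is the bookkeeping of signs and the verification that $Sp(2)/S^3\simeq S^7\vee S^{10}$, i.e.\ that the $10$-cell of $Sp(2)/S^3$ splits off. The naive attaching map of this $10$-cell is the image of $f_2$ under $\namedright{A}{}{A/S^3=S^7}$, an element of $\pi_9(S^7)\cong\mathbb{Z}/2\mathbb{Z}$ generated by $\eta_7^2$; I would need to check it is zero. This is where James's computation enters: since $\Sigma f_2$ factors through $\namedright{S^{10}}{2\nu_4^2}{S^4}$ and $2\nu_4^2$ is divisible by $2$ while $\pi_9(S^7)$ is $2$-torsion, the relevant class vanishes after composing $f_2$ with the collapse to $S^7$ — one must be slightly careful that the collapse $\namedright{A}{}{S^7}$ corresponds under suspension to the collapse $\namedright{\Sigma A}{}{S^8}$ and that $2\nu_5 \colon\namedright{S^{11}}{}{S^{5}}$, the image of $2\nu_4^2$, composed appropriately, is null or factors through the bottom cell. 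Once the splitting is in hand, the rest is a formal Puppe-sequence manipulation, and the identification $g=2\nu_4^2$ is immediate from James's theorem.
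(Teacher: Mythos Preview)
Your strategy of directly analysing the Puppe sequence of $i\colon S^{3}\to Sp(2)$ and splitting the cofibre $C=Sp(2)/S^{3}$ is sound, and your argument that $q\circ f_{2}\in\pi_{9}(S^{7})$ vanishes (via $\Sigma(q\circ f_{2})=\Sigma q\circ\Sigma f_{2}$, the James factorisation, the fact that $S^{4}\to\Sigma A\to S^{8}$ are consecutive cofibre maps, and Freudenthal) is correct. The paper instead builds a specific map $\phi\colon Sp(2)\to S^{7}\vee S^{10}$ from the coaction on $Sp(2)$ and reads off the cofibre from a pushout square, so the two routes differ in how the splitting and the map $\Sigma\nu'-g$ are produced.

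There is, however, a genuine gap in your identification of the $S^{10}$-component. You assert that ``the identification $g=2\nu_{4}^{2}$ is immediate from James's theorem'', but James only tells you that $\Sigma f_{2}\colon S^{10}\to\Sigma A$ factors as $\iota\circ 2\nu_{4}^{2}$ where $\iota\colon S^{4}\hookrightarrow\Sigma A$. Whatever section $s\colon S^{10}\to C$ you choose, the composite $\beta=\delta\circ s\colon S^{10}\to S^{4}$ satisfies $\iota\circ\beta\simeq\pm\Sigma f_{2}$ (this itself needs a compatibility-of-Puppe-sequences argument you do not give), so $\beta$ and $\pm 2\nu_{4}^{2}$ are two lifts of the same class through $\iota$. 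They can differ by anything in the image of $(\Sigma\nu')_{\ast}\colon\pi_{10}(S^{7})\to\pi_{10}(S^{4})$, and $\pi_{10}(S^{7})$ is generated by $\nu_{7}$. The paper closes this gap by proving $\Sigma\nu'\circ\nu_{7}\simeq\ast$: since $\Sigma\nu'\circ\nu_{7}$ is a suspension its Hopf invariant vanishes, and $H\colon\pi_{10}(S^{4})\to\pi_{10}(S^{7})$ is injective, so the class is trivial. Without this step (or an equivalent) the value of $g$ is not pinned down, and your ``up to sign'' does not cover the ambiguity, which is by multiples of $\Sigma\nu'\circ\nu_{7}$ rather than by a sign.
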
 

\begin{proof} 
Define the space $C$ and the map $\delta$ by the homotopy cofibration sequence 
\begin{equation} 
  \label{Ccofib} 
  \namedddright{S^{3}}{i}{Sp(2)}{}{C}{\delta}{S^{4}}. 
\end{equation}  
In~\cite{Th2} it was shown that $C$ is homotopy equivalent to $S^{7}\vee S^{10}$; 
to also identify the homotopy class of $\delta$ we give an alternative description  
of the homotopy type of $C$. 

Let $q$ be the 
composite 
\(\nameddright{A}{j}{Sp(2)}{}{S^{7}}\) 
and notice that $q$ is the pinch map to the top cell. From this composite 
we obtain a homotopy pushout diagram 
\begin{equation} 
  \label{fpo} 
  \diagram 
      A\rto^-{j}\ddouble & Sp(2)\rto\dto & S^{10}\rto^-{\Sigma f_{2}}\dto^{g} 
              & \Sigma A\ddouble \\ 
      A\rto^-{q} & S^{7}\rto^-{\Sigma\nu'} & S^{4}\rto & \Sigma A 
  \enddiagram 
\end{equation} 
for some map $g$. James' factorization of $\Sigma f_{2}$ as 
\(\nameddright{S^{10}}{2\nu_{4}^{2}}{S^{4}}{}{\Sigma A}\) 
therefore implies that the difference $2\nu_{4}^{2}-g$ lifts to 
the homotopy fibre $F$ of 
\(\namedright{S^{4}}{}{\Sigma A}\). 
For dimensional reasons, this lift factors through the $10$-skeleton 
of $F$, which by a Serre spectral sequence calculation, is $S^{7}$. The 
composite 
\(S^{7}\hookrightarrow\namedright{F}{}{S^{4}}\) 
is homotopic to $\Sigma\nu'$, so $2\nu_{4}^{2}-g$ factors as the composite 
\(\nameddright{S^{10}}{h}{S^{7}}{\Sigma\nu'}{S^{4}}\) 
for some map $h$. By~\cite{To}, $\pi_{10}(S^{7})\cong\mathbb{Z}/12\mathbb{Z}$, 
generated by $\nu_{7}$, so $h=s\cdot\nu_{7}$ for some $s\in\mathbb{Z}/12\mathbb{Z}$, 
implying that $2\nu_{4}^{2}-g\simeq\Sigma\nu'\circ (s\cdot\nu_{7})$. 
By~\cite{To}, the Hopf invariant 
\(H\colon\namedright{\pi_{10}(S^{4})}{}{\pi_{10}(S^{7})}\) 
is an injection. But as $\Sigma\nu'\circ\nu_{7}\simeq\Sigma(\nu'\circ\nu_{6})$, 
we obtain $H(\Sigma\nu'\circ\nu_{7})=0$. Thus $\Sigma\nu'\circ\nu_{7}$ is null 
homotopic, and hence $g\simeq 2\nu_{4}^{2}$. 

The homotopy cofibration sequence 
\(\namedddright{S^{9}}{f_{2}}{A}{}{Sp(2)}{}{S^{10}}\) 
has a coaction 
\(\psi\colon\namedright{Sp(2)}{}{Sp(2)\vee S^{10}}\). 
Composing with the standard map 
\(\namedright{Sp(2)}{}{S^{7}}\) 
we obtain a composite 
\(\phi\colon\nameddright{Sp(2)}{\psi}{Sp(2)\vee S^{10}}{}{S^{7}\vee S^{10}}\). 
This coaction lets us use a Mayer-Vietorus like argument to produce a homotopy 
cofibration 
\(\nameddright{Sp(2)}{\phi}{S^{7}\vee S^{10}}{\Sigma\nu'-g}{S^{4}}\) 
from the pushout in the middle square of~(\ref{fpo}). 
Finally, notice that as $\phi\circ i$ is null homotopic, $\phi$ extends to a map  
\(\epsilon\colon\namedright{C}{}{S^{7}\vee S^{10}}\). 
As $\phi_{\ast}$ is an epimorphism, so is $\epsilon_{\ast}$. Comparing 
Poincar\'{e} series, $\epsilon_{\ast}$ must therefore be an isomorphism 
and so $\epsilon$ is a homotopy equivalence. Hence, up to homotopy, 
there is a cofibration sequence 
\(\namedddright{S^{3}}{i}{Sp(2)}{\phi}{S^{7}\vee S^{10}}{\Sigma\nu'-g}{S^{4}}\) 
where $g=2\nu_{4}^{2}$. 
\end{proof} 

By~\cite{Th2}, there is a homotopy commutative diagram 
\[\diagram 
      Sp(2)\rto^-{\partial_{1}}\dto & \Omega^{3}_{0} Sp(2) \\ 
      S^{7}\vee S^{10}\urto_-{a+b} & 
  \enddiagram\] 
where $a$ represents a generator of 
$\pi_{7}(\Omega^{3}_{0} Sp(2))\cong\pi_{10}(Sp(2))\cong\mathbb{Z}/8\mathbb{Z}$, 
and $b$ represents some class in 
$\pi_{10}(\Omega^{3}_{0} Sp(2))\cong\pi_{13}(Sp(2))\cong 
    \mathbb{Z}/4\mathbb{Z}\oplus\mathbb{Z}/2\mathbb{Z}$. 
Since $b$ has order at most $4$, we immediately obtain the following. 

\begin{lemma} 
   \label{4partial1} 
   There is a homotopy commutative diagram 
   \[\diagram 
          Sp(2)\rrto^-{\partial_{1}}\dto & & \Omega^{3}_{0} Sp(2)\dto^{4} \\ 
          S^{7}\vee S^{10}\rto^-{p_{1}} & S^{7}\rto^-{4a} & \Omega^{3}_{0} Sp(2) 
     \enddiagram\]
   where $p_{1}$ is the pinch map onto the left wedge summand.~$\qqed$ 
\end{lemma}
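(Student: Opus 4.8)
The plan is to combine the two homotopy commutative diagrams already in hand: the one from \cite{Th2} exhibiting $\partial_1$ as factoring through $S^7\vee S^{10}\xrightarrow{a+b}\Omega^3_0 Sp(2)$, and the observation that $b$ has order at most $4$ in $\pi_{10}(\Omega^3_0 Sp(2))$. First I would multiply the composite $\partial_1\simeq(a+b)\circ\phi$ by $4$: since $\Omega^3_0 Sp(2)$ is a loop space (indeed a triple loop space), the degree-$4$ self-map is an $H$-map and distributes over the co-$H$ structure of $S^7\vee S^{10}$, so $4\circ(a+b)\simeq (4a)\circ p_1 + (4b)\circ p_2$ where $p_1,p_2$ are the two pinch maps. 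Because $4b$ is null homotopic by the order bound on $b$, the second term vanishes, leaving $4\circ\partial_1\simeq (4a)\circ p_1\circ\phi$. It remains to recognize $p_1\circ\phi$ as the pinch map onto the $S^7$ summand; but $\phi$ was constructed in the proof of Lemma~\ref{Sp2cofib} precisely from the coaction and the standard projection $Sp(2)\to S^7$, so $p_1\circ\phi$ is (homotopic to) the pinch map $Sp(2)\to S^7$ onto the $7$-cell, which is exactly what the diagram records.

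Concretely, the steps in order: (1) quote the diagram from \cite{Th2} to write $\partial_1\simeq(a+b)\circ\phi$; (2) recall that $\Omega^3_0 Sp(2)$ is an $H$-space so the $4$-fold power map is additive with respect to wedge decompositions of the source, giving $4\circ(a+b)\simeq(4a+4b)$ as maps out of $S^7\vee S^{10}$, where we abuse notation by writing $4a$ for $(4a)\circ p_1$ etc.; (3) note $4b\simeq *$ since $\pi_{10}(\Omega^3_0 Sp(2))\cong\mathbb{Z}/4\mathbb{Z}\oplus\mathbb{Z}/2\mathbb{Z}$ makes $b$ of order dividing $4$; (4) conclude $4\circ\partial_1\simeq 4a\circ p_1\circ\phi$; (5) identify $p_1\circ\phi$ with the skeletal pinch $Sp(2)\to S^7$ using the explicit description of $\phi$ from the previous lemma. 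This is essentially bookkeeping, so the proof can be short — indeed the excerpt already marks it with $\qqed$ as immediate.

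The only point requiring any care is step (2): one must be sure that precomposing the $H$-space power map with a map out of a co-$H$-space (here $S^7\vee S^{10}$, with comultiplication the pinch to the wedge of the two spheres) genuinely splits additively over the wedge summands. This is standard — for a co-$H$-space $\Sigma Y$ and an $H$-space $Z$, the set $[\Sigma Y, Z]$ is a group and the power maps on the $Z$ side agree with the power maps induced by the co-$H$ structure on $\Sigma Y$ — but it is worth stating, since it is exactly the mechanism that lets the $4b$ term drop out while the $4a$ term survives on the correct wedge summand. There is no real obstacle here; the lemma is a direct corollary of the \cite{Th2} diagram together with the order bound on $b$, and the hardest part is simply making sure the wedge-summand bookkeeping in diagram~\eqref{natdgrm}-style notation is unambiguous.
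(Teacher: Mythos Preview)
Your proposal is correct and follows exactly the paper's reasoning: the paper simply notes that since $b$ has order at most $4$ the lemma follows immediately from the cited diagram, and your steps (1)--(4) spell this out. Step (5) is a harmless extra --- the diagram as stated does not require identifying $p_1\circ\phi$ with the standard pinch $Sp(2)\to S^7$, since the unlabeled left vertical arrow is just the map $Sp(2)\to S^7\vee S^{10}$ from the \cite{Th2} diagram and the bottom row already records $p_1$ separately.
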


\section{Determining the ``order" of $\overline{\partial}_{1}$} 

Start with the homotopy cofibration sequence 
\begin{equation} 
  \label{bigcofib} 
  \namedddright{S^{3}}{\eta}{S^{2}}{}{\cptwo}{}{S^{4}} 
        \stackrel{\Sigma\eta}{\longrightarrow} S^{3} 
\end{equation}  
and the homotopy fibration 
\begin{equation} 
  \label{bigfib} 
  \nameddright{S^{3}}{}{Sp(2)}{}{S^{7}}. 
\end{equation}  
Applying the pointed mapping space functor $\mapstar(\ \ ,\ )$ with 
the spaces in the left variable coming from~(\ref{bigcofib}) and the spaces in 
the right variable coming from~(\ref{bigfib}), and writing $\mapstar(S^{t},X)$ 
as $\Omega^{t} X$, we obtain a homotopy fibration diagram 
\begin{equation} 
  \label{bigfibdgrm} 
  \diagram 
       \Omega^{3} S^{3}\rto^-{(\Sigma\eta)^{\ast}}\dto 
           & \Omega^{4} S^{3}\rto\dto & \mapstar(\cptwo,S^{3})\rto\dto 
           & \Omega^{2} S^{3}\rto^-{\eta^{\ast}}\dto & \Omega^{3} S^{3}\dto \\ 
       \Omega^{3} Sp(2)\rto^-{(\Sigma\eta)^{\ast}}\dto  
           & \Omega^{4} Sp(2)\rto\dto & \mapstar(\cptwo,Sp(2))\rto\dto   
           & \Omega^{2} Sp(2)\rto^-{\eta^{\ast}}\dto & \Omega^{3} Sp(2)\dto \\    
       \Omega^{3} S^{7}\rto^-{(\Sigma\eta)^{\ast}} 
           & \Omega^{4} S^{7}\rto & \mapstar(\cptwo,S^{7})\rto 
           & \Omega^{2} S^{7}\rto^-{\eta^{\ast}} & \Omega^{3} S^{7}.
  \enddiagram 
\end{equation}     

For the remainder of the section, localize all spaces and maps at $2$. 

\begin{lemma} 
   \label{pi3} 
   There is an isomorphism $\pi_{3}(\mapstar(\cptwo,Sp(2)))\cong\mathbb{Z}$ and 
   the map 
   \(\namedright{\Omega^{4} Sp(2)}{}{\mapstar(\cptwo,Sp(2))}\) 
   induces 
   \(\namedright{\mathbb{Z}}{2}{\mathbb{Z}}\) 
   on $\pi_{3}$. 
\end{lemma}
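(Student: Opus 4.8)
The plan is to read off $\pi_3$ of the mapping space from the third row and column of the homotopy fibration diagram~(\ref{bigfibdgrm}), using known homotopy groups of $S^3$, $S^7$ and $Sp(2)$ localized at~$2$. First I would record the relevant $2$-local facts: $\pi_k(S^3)$ for $k=5,6,7$ (so $\pi_5(S^3)\cong\mathbb{Z}/2$, $\pi_6(S^3)\cong\mathbb{Z}/4$ after $2$-localization, $\pi_7(S^3)\cong\mathbb{Z}/2$), $\pi_k(S^7)$ for the low range, and the $2$-local homotopy groups of $Sp(2)$ through degree $\sim 8$, together with the exactness of the fibration~(\ref{bigfib}) on homotopy groups. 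Since $\mapstar(\cptwo,Sp(2))$ sits in the middle of the horizontal fibration in the middle row, with fibre $\Omega^4 Sp(2)$ and base $\Omega^2 Sp(2)$ (via the evident maps in~(\ref{bigfibdgrm})), the group $\pi_3(\mapstar(\cptwo,Sp(2)))$ fits into an exact sequence involving $\pi_3(\Omega^4 Sp(2))\cong\pi_7(Sp(2))$, $\pi_3(\Omega^2 Sp(2))\cong\pi_5(Sp(2))$, and the map $\eta^{\ast}$ which on these groups is precomposition-with-$\eta$, i.e. the usual $\eta$-multiplication $\pi_5(Sp(2))\to\pi_6(Sp(2))$.

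Next I would identify these groups and maps concretely. Localized at~$2$, $\pi_7(Sp(2))$ and $\pi_5(Sp(2))$ are read off from the fibration $S^3\to Sp(2)\to S^7$: in this range $\pi_i(Sp(2))\cong\pi_i(S^3)$ for $i\le 6$, and $\pi_7(Sp(2))\cong\mathbb{Z}$ coming from the $S^7$ factor (the $S^3$-contribution $\pi_7(S^3)$ and the boundary must be sorted out, but the rank-one part is the point). The key computation is that the connecting map $\eta^{\ast}\colon\pi_3(\Omega^2 Sp(2))\to\pi_3(\Omega^3 Sp(2))$, i.e. $\eta$-multiplication $\pi_5(Sp(2))\to\pi_6(Sp(2))$, together with the map $(\Sigma\eta)^{\ast}$ on the other side, pins down $\pi_3(\mapstar(\cptwo,Sp(2)))$ as an extension whose underlying group is $\mathbb{Z}$. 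Then, to get the factor of~$2$, I would chase the vertical map $\Omega^4 Sp(2)\to\mapstar(\cptwo,Sp(2))$: on $\pi_3$ this is the map $\pi_7(Sp(2))\to\pi_3(\mapstar(\cptwo,Sp(2)))$ induced by the inclusion of the fibre, and comparing with the $S^3$- and $S^7$-rows (where the analogous map and its cokernel can be computed directly, since mapping spaces into spheres are classical) shows the image has index~$2$ — equivalently, the cokernel of $\Omega^4 S^7\to\mapstar(\cptwo,S^7)$ on $\pi_3$ contributes a $\mathbb{Z}/2$ that the diagram forces the $Sp(2)$-map to surject onto only after multiplication by~$2$.

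The main obstacle I anticipate is the bookkeeping of the two interlocking exact sequences: one must simultaneously control the horizontal fibration (fibre $\Omega^4 Sp(2)$, total space $\mapstar(\cptwo,Sp(2))$, base $\Omega^2 Sp(2)$) and the vertical fibration (fibre $\mapstar(\cptwo,S^3)$, total space $\mapstar(\cptwo,Sp(2))$, base $\mapstar(\cptwo,S^7)$), and check that the relevant $\eta$- and $2\nu$-type operations on $2$-local homotopy groups of spheres behave as expected — in particular that $\eta^\ast$ kills the relevant $\mathbb Z$ and that no unexpected extension problem changes the group from $\mathbb{Z}$ to something else. I would resolve this by first doing the $S^3$ and $S^7$ rows completely (these are purely classical), then using the three vertical fibrations of~(\ref{bigfibdgrm}) to transport the answer to the middle row, and finally comparing the $\pi_3$-map $\Omega^4 Sp(2)\to\mapstar(\cptwo,Sp(2))$ against its $S^7$-analogue to extract the multiplication-by-$2$ claim. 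The statement of Lemma~\ref{4partial1} and the structure of $\partial_1$ recorded just before it are not needed here, but the $2$-local homotopy-group inputs they implicitly rely on ($\pi_{10}(Sp(2))$, etc.) are of the same flavour and give a consistency check.
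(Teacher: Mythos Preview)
Your overall strategy matches the paper's: apply $\pi_{3}$ to diagram~(\ref{bigfibdgrm}), compute the $S^{3}$ and $S^{7}$ rows explicitly, and use the vertical fibration to pin down the middle row. But one detail in your mechanism is wrong, and the crucial step deserves sharper articulation.

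Your claim that ``the cokernel of $\Omega^{4} S^{7}\to\mapstar(\cptwo,S^{7})$ on $\pi_{3}$ contributes a $\mathbb{Z}/2$'' is incorrect. On $\pi_{3}$ this map is $\pi_{7}(S^{7})\to\pi_{3}(\mapstar(\cptwo,S^{7}))$, and since $\pi_{5}(S^{7})=\pi_{6}(S^{7})=0$ the bottom row of the diagram collapses to show it is an \emph{isomorphism} onto $\mathbb{Z}$, with cokernel zero. The $\mathbb{Z}/2$ in the middle-row exact sequence comes instead from $\pi_{3}(\Omega^{2}Sp(2))\cong\pi_{5}(Sp(2))\cong\mathbb{Z}/2$; since $\pi_{6}(Sp(2))=0$ the map $\eta^{\ast}$ out of it is trivially zero (so this is not the ``key computation'' you flag), and one gets a short exact sequence $0\to\mathbb{Z}\to\pi_{3}(\mapstar(\cptwo,Sp(2)))\to\mathbb{Z}/2\to 0$.

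The real content is resolving this extension, and here your plan to ``do the $S^{3}$ row completely'' is exactly what is needed, but for a specific reason you do not name: the $S^{3}$ row gives $\pi_{3}(\mapstar(\cptwo,S^{3}))=0$. One checks that $(\Sigma\eta)^{\ast}\colon\pi_{6}(S^{3})\to\pi_{7}(S^{3})$ is onto (sending $\nu'$ to $\nu'\eta$) and that $\eta^{\ast}\colon\pi_{5}(S^{3})\to\pi_{6}(S^{3})$ is injective (sending $\eta^{2}$ to $\eta^{3}=2\nu'$). The vanishing of $\pi_{3}(\mapstar(\cptwo,S^{3}))$ then forces, via the vertical fibration, an injection of $\pi_{3}(\mapstar(\cptwo,Sp(2)))$ into $\pi_{3}(\mapstar(\cptwo,S^{7}))\cong\mathbb{Z}$; hence the group is torsion-free, the extension is $\mathbb{Z}$, and the map from $\pi_{7}(Sp(2))$ is multiplication by~$2$. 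So the factor of $2$ is produced by the $\mathbb{Z}/2$ in $\pi_{5}(Sp(2))$, not by anything in the $S^{7}$ row.
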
 

\begin{proof} 
Apply $\pi_{3}$ to~(\ref{bigfibdgrm}), using the homotopy group information 
for spheres in~\cite{To} and $Sp(2)$ in~\cite{MT}, to obtain a diagram of exact sequences 
\begin{equation} 
  \label{pi3dgrm} 
  \diagram 
     \zmodfour\rto^-{(\Sigma\eta)^{\ast}}\dto & \zmodtwo\rto^-{a}\dto^{c} 
          & \pi_{3}(\mapstar(\cptwo,S^{3})\rto^-{b}\dto 
          & \zmodtwo\rto^-{\eta^{\ast}}\dto & \zmodfour\dto \\ 
     0\rto\dto & \mathbb{Z}\rto^-{d}\dto^{e} & \pi_{3}(\mapstar(\cptwo,Sp(2)))\rto\dto^{f} 
          & \zmodtwo\rto\dto & 0\dto \\ 
     0\rto & \mathbb{Z}\rto^-{g} & \pi_{3}(\mapstar(\cptwo,S^{7}))\rto & 0\rto & 0  
  \enddiagram 
\end{equation}  
where the maps $a$ through $g$ are to be determined. 

First, consider the map $a$. By~\cite{To}, $\pi_{3}(\Omega^{3} S^{3})\cong\zmodfour$ and 
$\pi_{3}(\Omega^{4} S^{3})\cong\zmodtwo$ are generated by the adjoints 
of $\nu'$ and $\nu'\circ\eta$ respectively. As $(\Sigma\eta)^{\ast}$ precomposes 
with $\eta$, this map is onto. Hence $a=0$. Second, consider the map $b$. By~\cite{To}, 
$\pi_{3}(\Omega^{2} S^{3})\cong\zmodtwo$ is generated by the adjoint of $\eta^{2}$, 
so~$\eta^{\ast}$ sends this to the adjoint of $\eta^{3}$. But this equals $2\nu'$, 
which generates $\pi_{3}(\Omega^{3} S^{3})$, so the map $b$ is an injection. 
Hence exactness along the top row in~(\ref{pi3dgrm}) implies that  
$\pi_{3}(\mapstar(\cptwo,S^{3})\cong 0$. This then implies that the map $f$ 
is an injection.  

Third, exactness along the bottom row in~(\ref{pi3dgrm}) implies that $g$ is 
an isomorphism. Fourth, by~\cite{MT}, the map 
\(\namedright{\pi_{3}(\Omega^{4} Sp(2))}{}{\pi_{3}(\Omega^{4} S^{7})}\) 
is 
\(\namedright{\mathbb{Z}}{4}{\mathbb{Z}}\), 
so in~(\ref{pi3dgrm}) we have $c=0$ and $e=4$. 

Finally, consider the middle row in~(\ref{pi3dgrm}). By exactness, either 
$\pi_{3}(\mapstar(\cptwo,Sp(2)))$ is isomorphic to $\mathbb{Z}$ with $d=2$ 
or it is isomorphic to $\mathbb{Z}\oplus\zmodtwo$ with $d$ being the inclusion 
of the integral summand. But as $f$ is an injection and 
$\pi_{3}(\mapstar(\cptwo,S^{7})\cong\mathbb{Z}$, it must be the first possibility 
that holds. Hence $\pi_{3}(\mapstar(\cptwo,Sp(2)))\cong\mathbb{Z}$ and the map 
\(\namedright{\Omega^{4} Sp(2)}{}{\mapstar(\cptwo,Sp(2))}\) 
induces 
\(\namedright{\mathbb{Z}}{2}{\mathbb{Z}}\) 
on~$\pi_{3}$. 
\end{proof} 

\begin{lemma} 
   \label{pi6} 
   There is an isomorphism $\pi_{6}(\mapstar(\cptwo,Sp(2)))\cong\zmodeight$,
   and the map 
   \(\namedright{\Omega^{4} Sp(2)}{}{\mapstar(\cptwo,Sp(2))}\) 
   induces an isomorphism on $\pi_{6}$. 
\end{lemma}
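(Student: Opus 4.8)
The plan is to mimic the structure of the proof of Lemma~\ref{pi3}, applying $\pi_{6}$ to the homotopy fibration diagram~(\ref{bigfibdgrm}) and chasing the resulting diagram of exact sequences, all localized at $2$. First I would assemble the relevant $2$-local homotopy groups: $\pi_{6}(\Omega^{3}S^{3})\cong\pi_{9}(S^{3})$, $\pi_{6}(\Omega^{4}S^{3})\cong\pi_{10}(S^{3})$, $\pi_{6}(\Omega^{2}S^{3})\cong\pi_{8}(S^{3})$ from~\cite{To}; the groups $\pi_{6}(\Omega^{3}S^{7})\cong\pi_{9}(S^{7})$, $\pi_{6}(\Omega^{4}S^{7})\cong\pi_{10}(S^{7})$, $\pi_{6}(\Omega^{2}S^{7})\cong\pi_{8}(S^{7})=0$ from~\cite{To}; and $\pi_{6}(\Omega^{3}Sp(2))\cong\pi_{9}(Sp(2))$, $\pi_{6}(\Omega^{4}Sp(2))\cong\pi_{10}(Sp(2))\cong\zmodeight$, $\pi_{6}(\Omega^{2}Sp(2))\cong\pi_{8}(Sp(2))$ from~\cite{MT}. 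This produces a $5\times 3$ grid of exact sequences analogous to~(\ref{pi3dgrm}), with the bottom-right entries vanishing since $\pi_{8}(S^{7})=0$.

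Next I would pin down the horizontal maps. As in Lemma~\ref{pi3}, the maps induced by $(\Sigma\eta)^{\ast}$ and $\eta^{\ast}$ on the sphere rows are computed by pre-composition with $\eta$, using the known action of $\eta$ on the relevant stable and unstable stems; the goal is to show that the column $\pi_{6}(\mapstar(\cptwo,S^{3}))$ contributes nothing (or only a controlled amount) so that the middle vertical map $f\colon\pi_{6}(\mapstar(\cptwo,Sp(2)))\to\pi_{6}(\mapstar(\cptwo,S^{7}))$ is close to an isomorphism. The bottom row gives $\pi_{6}(\mapstar(\cptwo,S^{7}))$ as an extension of $\pi_{10}(S^{7})$-data by $0$, which together with the vanishing of $\pi_{8}(S^{7})$ identifies it explicitly. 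Then the middle row, an exact sequence relating $\pi_{6}(\Omega^{4}Sp(2))\cong\zmodeight$, $\pi_{6}(\mapstar(\cptwo,Sp(2)))$, and a subquotient of $\pi_{8}(Sp(2))$, combined with the map $e\colon\pi_{6}(\Omega^{4}Sp(2))\to\pi_{6}(\Omega^{4}S^{7})$ coming from the fibration~(\ref{bigfib}), forces $\pi_{6}(\mapstar(\cptwo,Sp(2)))\cong\zmodeight$ and shows the map from $\Omega^{4}Sp(2)$ is an isomorphism on $\pi_{6}$.

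The main obstacle I anticipate is controlling the column $\pi_{6}(\mapstar(\cptwo,S^{3}))$: unlike the $\pi_{3}$ case where the relevant sphere groups were small ($\zmodtwo$, $\zmodfour$), here $\pi_{9}(S^{3})$, $\pi_{10}(S^{3})$ and $\pi_{8}(S^{3})$ are each nontrivial $2$-groups (involving classes like $\nu'\eta$, $\nu'\eta^{2}$, $\mu'$, $\eta\epsilon'$, etc.), so determining whether $(\Sigma\eta)^{\ast}$ is onto and whether $\eta^{\ast}$ is injective in this range requires careful bookkeeping of Toda brackets and the $\eta$-action in the $3$-stem through the $7$-stem. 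A secondary subtlety is the possible extension problem in the middle row: even after identifying the orders, I must rule out a $\mathbb{Z}/8$ versus, say, $\mathbb{Z}/4\oplus\mathbb{Z}/2$ ambiguity, which I expect to resolve exactly as in Lemma~\ref{pi3} by using that $f$ is injective and that $\pi_{6}(\mapstar(\cptwo,S^{7}))$ is cyclic, so the extension cannot split off a direct summand that maps to zero. Once these two points are settled, the identification of the map $\Omega^{4}Sp(2)\to\mapstar(\cptwo,Sp(2))$ on $\pi_{6}$ follows by comparing it through the fibration to the already-understood map $\Omega^{4}Sp(2)\to\Omega^{4}S^{7}$.
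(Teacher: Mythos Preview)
Your plan is not wrong, but it is far more elaborate than necessary and misses the key simplification that makes this lemma a one-liner. You correctly list $\pi_{6}(\Omega^{2}Sp(2))\cong\pi_{8}(Sp(2))$ and $\pi_{6}(\Omega^{3}Sp(2))\cong\pi_{9}(Sp(2))$ as inputs from~\cite{MT}, but you do not actually look up their values: both vanish $2$-locally. Once you know this, the middle row of~(\ref{bigfibdgrm}) alone gives the exact sequence
\[
0\longrightarrow \pi_{6}(\Omega^{4}Sp(2))\longrightarrow \pi_{6}(\mapstar(\cptwo,Sp(2)))\longrightarrow 0,
\]
so the map in question is an isomorphism and $\pi_{6}(\mapstar(\cptwo,Sp(2)))\cong\pi_{10}(Sp(2))\cong\zmodeight$. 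That is the paper's entire proof.

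Consequently, both of the obstacles you flag are illusory. There is no need to control the column $\pi_{6}(\mapstar(\cptwo,S^{3}))$, no need to compute the $\eta$-action on $\pi_{8}(S^{3})$, $\pi_{9}(S^{3})$, $\pi_{10}(S^{3})$, and no extension problem to resolve. The full $3\times 5$ diagram chase you propose, modeled on the genuinely harder Lemma~\ref{pi3}, would presumably succeed but at the cost of a page of unnecessary Toda-bracket bookkeeping. The lesson is: before setting up a large diagram chase, check whether the terms flanking the unknown group already vanish.
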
 

\begin{proof} 
By~\cite{MT}, $\pi_{6}(\Omega^{3} Sp(2))\cong 0$ and 
$\pi_{6}(\Omega^{2} Sp(2))\cong 0$, so when $\pi_{6}$ is applied to the 
homotopy fibration in the middle row of~(\ref{bigfibdgrm}) we obtain an 
isomorphism 
\(\namedright{\pi_{6}(\Omega^{4} Sp(2))}{}{\pi_{6}(\mapstar(\cptwo,Sp(2)))}\). 
By~\cite{MT}, $\pi_{6}(\Omega^{4} Sp(2))\cong\zmodeight$. 
\end{proof} 

\begin{lemma} 
   \label{pi9} 
   The following hold: 
   \begin{letterlist} 
      \item there is an isomorphism $\pi_{9}(\mapstar(\cptwo,Sp(2)))\cong\zmodfour$;  
      \item the map 
               \(\namedright{\Omega^{4} Sp(2)}{}{\mapstar(\cptwo,Sp(2))}\) 
              sends the order~$4$ generator of 
              $\pi_{9}(\Omega^{4} Sp(2))\cong\zmodfour\oplus\zmodtwo$ to an 
              order~$4$ generator of $\pi_{9}(\mapstar(\cptwo,Sp(2)))$.  
   \end{letterlist} 
\end{lemma}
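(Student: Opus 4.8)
The plan is to mimic the structure of the proofs of Lemmas~\ref{pi3} and~\ref{pi6}: apply $\pi_{9}$ to the homotopy fibration diagram~(\ref{bigfibdgrm}) and extract the needed information by diagram chasing. First I would apply $\pi_{9}$ to the middle row of~(\ref{bigfibdgrm}), the fibration
\(\namedddright{\Omega^{4} Sp(2)}{}{\mapstar(\cptwo,Sp(2))}{}{\Omega^{2} Sp(2)}{\eta^{\ast}}{\Omega^{3} Sp(2)}\),
together with the preceding term $\Omega^{3} Sp(2)\xrightarrow{(\Sigma\eta)^{\ast}}\Omega^{4} Sp(2)$. Using the $Sp(2)$ homotopy group data from~\cite{MT} and the sphere data from~\cite{To}, this gives a long exact sequence relating $\pi_{9}(\Omega^{4}Sp(2))\cong\pi_{13}(Sp(2))\cong\zmodfour\oplus\zmodtwo$, $\pi_{9}(\mapstar(\cptwo,Sp(2)))$, and $\pi_{9}(\Omega^{2}Sp(2))\cong\pi_{11}(Sp(2))$, with connecting maps $(\Sigma\eta)^{\ast}$ on the left and $\eta^{\ast}$ on the right. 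The goal is to show the cokernel of the incoming $(\Sigma\eta)^{\ast}$ is exactly $\zmodfour$ and that it injects into $\pi_{9}(\mapstar(\cptwo,Sp(2)))$ with the complementary $\zmodfour$-part of the source dying.

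The key step is to pin down the two connecting maps $(\Sigma\eta)^{\ast}\colon\pi_{9}(\Omega^{3}Sp(2))\to\pi_{9}(\Omega^{4}Sp(2))$ and $\eta^{\ast}\colon\pi_{9}(\Omega^{2}Sp(2))\to\pi_{9}(\Omega^{3}Sp(2))$, both of which amount to composition with (a suspension of) $\eta$ on the relevant homotopy groups $\pi_{*}(Sp(2))$. I would do this by comparing with the fibration~(\ref{bigfib}), $S^{3}\to Sp(2)\to S^{7}$, exactly as in the proofs above: the outer rows of~(\ref{bigfibdgrm}) involve $\pi_{*}(\mapstar(\cptwo,S^{3}))$ and $\pi_{*}(\mapstar(\cptwo,S^{7}))$, whose relevant homotopy groups are governed by $\pi_{*}(S^{3})$ and $\pi_{*}(S^{7})$ together with composition by $\eta$, all computable from~\cite{To}. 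In particular I expect $\eta^{\ast}$ on $\pi_{9}$ of the $S^{7}$-column to be essentially multiplication by the relevant $\eta$ on $\pi_{11}(S^{7})\to\pi_{12}(S^{7})$, and the $S^{3}$-column contributions to be small, so that a five-lemma/snake-lemma chase through the vertical fibrations $\mapstar(\cptwo,S^{3})\to\mapstar(\cptwo,Sp(2))\to\mapstar(\cptwo,S^{7})$ isolates the $Sp(2)$ answer. The identification of the order-$4$ generator in part~(b) then follows by tracking a generator of the $\zmodfour$-summand of $\pi_{13}(Sp(2))$ along $\namedright{\Omega^{4}Sp(2)}{}{\mapstar(\cptwo,Sp(2))}$ and checking, via exactness, that it is not hit by the incoming $(\Sigma\eta)^{\ast}$ and is not divisible, hence survives to an order-$4$ generator of the quotient.

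The main obstacle I anticipate is bookkeeping in the stable/unstable $\eta$-multiplications on $\pi_{*}(Sp(2))$ in the relevant range: the groups $\pi_{11}(Sp(2))$, $\pi_{12}(Sp(2))$, $\pi_{13}(Sp(2))$ and the effect of composition with $\eta$ among them are not as transparent as the $\pi_{3}$ and $\pi_{6}$ cases, since $\pi_{13}(Sp(2))\cong\zmodfour\oplus\zmodtwo$ has two summands and one must determine precisely which part is killed by $(\Sigma\eta)^{\ast}$ coming from $\pi_{9}(\Omega^{3}Sp(2))\cong\pi_{12}(Sp(2))$. I would resolve this by using the fibration~(\ref{bigfib}) to express these $Sp(2)$ groups and the $\eta$-action in terms of the corresponding sphere data from~\cite{To} and~\cite{MT}, checking compatibility of the $\eta$-multiplications under the maps $S^{3}\to Sp(2)\to S^{7}$, and — if a genuine ambiguity in the extension remains — appealing to the factorization of $\partial_{1}$ through $S^{7}\vee S^{10}$ recorded in Lemma~\ref{4partial1}, which constrains the order-$4$ behaviour on the $S^{7}$ summand and is exactly what part~(b) will later feed into the proof of Theorem~\ref{maporder}.
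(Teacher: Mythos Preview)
Your plan is essentially the paper's approach: apply $\pi_{9}$ to~(\ref{bigfibdgrm}) and chase. Two refinements are worth flagging. First, the paper uses only the top two rows ($S^{3}$ and $Sp(2)$), not the $S^{7}$ row; the reason is that by~\cite{MT} the map $S^{3}\to Sp(2)$ induces isomorphisms on $\pi_{11}$ and $\pi_{12}$, so the outer vertical maps in the relevant ladder are identities and the $Sp(2)$ behaviour is read off directly from the $S^{3}$ row. Your suggestion to look at $\pi_{11}(S^{7})\to\pi_{12}(S^{7})$ gives nothing, since $\pi_{11}(S^{7})=0$. Second, the actual content is two explicit Toda computations on the $S^{3}$ row: that $(\Sigma\eta)^{\ast}\colon\pi_{12}(S^{3})\to\pi_{13}(S^{3})$ is injective (via $\eta_{3}\epsilon_{4}\eta_{12}$ and $\mu_{3}\eta_{12}$, using $\eta^{2}\epsilon=2\epsilon'$), and that $\eta^{\ast}\colon\pi_{11}(S^{3})\to\pi_{12}(S^{3})$ is injective (via $\epsilon_{3}\eta_{11}=\eta_{3}\epsilon_{4}$ after suspension). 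Combined with the fact that the middle vertical map $\pi_{13}(S^{3})\to\pi_{13}(Sp(2))$ is $2\oplus\mathrm{id}$, this pins down both the kernel and cokernel without ambiguity. Your proposed fallback to Lemma~\ref{4partial1} is unnecessary and would not resolve an extension problem here in any case.
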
 

\begin{proof} 
Apply $\pi_{9}$ to the top two rows of~(\ref{bigfibdgrm}) and use the homotopy 
group information for spheres in~\cite{To} and $Sp(2)$ in~\cite{MT} to obtain a 
diagram of exact sequences 
\begin{equation} 
  \label{pi9dgrm} 
  \spreaddiagramcolumns{-0.6pc} 
  \diagram 
      \zmodtwo\oplus\zmodtwo\rto^-{a}\dto^{e} 
          & \zmodfour\oplus\zmodtwo\rto^-{b}\dto^{f} 
          & \pi_{9}(\mapstar(\cptwo,S^{3}))\rto^-{c}\dto^{g} & \zmodtwo\rto^-{d}\dto^{h} 
          & \zmodtwo\oplus\zmodtwo\dto^{e} \\ 
      \zmodtwo\oplus\zmodtwo\rto^-{i} & \zmodfour\oplus\zmodtwo\rto^-{j} 
          & \pi_{9}(\mapstar(\cptwo,Sp(2)))\rto^-{k} & \zmodtwo\rto^-{\ell} 
          &  \zmodtwo\oplus\zmodtwo  
  \enddiagram 
\end{equation} 
where the maps $a$ through $\ell$ are to be determined. Note the two maps 
labelled $e$ are the same, that is, they are both 
\(\namedright{\pi_{9}(\Omega^{3} S^{3})}{}{\pi_{9}(\Omega^{3} Sp(2))}\). 

We aim to show that $j$ is onto, it is an injection on the $\zmodfour$ summand 
and it is trivial on the $\zmodtwo$ summand. These properties then imply that 
$\pi_{9}(\mapstar(\cptwo,Sp(2)))\cong\zmodfour$ and that the map 
\(\namedright{\Omega^{4} Sp(2)}{}{\mapstar(\cptwo,Sp(2))}\)  
sends the order~$4$ generator of 
$\pi_{9}(\Omega^{4} Sp(2))\cong\zmodfour\oplus\zmodtwo$ to an 
order~$4$ generator of $\pi_{9}(\mapstar(\cptwo,Sp(2)))$.  

First, by~\cite{MT}, the map 
\(\namedright{S^{3}}{}{Sp(2)}\) 
induces an isomorphism on $\pi_{11}$ and $\pi_{12}$, implying that $h$ 
and~$e$ respectively are isomorphisms. In fact, the generators of 
$\pi_{11}(Sp(2))$ and $\pi_{12}(Sp(2))$ are chosen as the images of those 
from $\pi_{11}(S^{3})$ and $\pi_{12}(S^{3})$ respectively, so $e$ and $h$ 
may be taken to be identity maps. As well, Theorem~5.1 and relation (5.1) 
of~\cite{MT} imply that the map $f$ is $2\oplus id$, where $id$ is the identity map. 

Consider the map $a$. Explicitly, $a$ is the map 
\(\namedright{\pi_{9}(\Omega^{3} S^{3})}{\eta^{\ast}}{\pi_{9}(\Omega^{4} S^{3})}\). 
Taking adjoints, $a$ sends a map 
\(\namedright{S^{12}}{t}{S^{3}}\) 
to the composite 
\(\nameddright{S^{13}}{\eta_{12}}{S^{11}}{t}{S^{3}}\). 
By~\cite{To}, the generators for $\pi_{12}(S^{3})\cong\zmodtwo\oplus\zmodtwo$ 
are $\eta_{3}\epsilon_{4}$ and $\mu_{3}$ and the generators for  
$\pi_{13}(S^{3})\cong\zmodfour\oplus\zmodtwo$ are~$\epsilon'$ 
and $\eta_{3}\mu_{4}$, of orders $4$ and $2$ respectively. Thus 
$a(\eta_{3}\epsilon_{4})=\eta_{3}\epsilon_{4}\eta_{12}$ and 
$a(\mu_{3})=\mu_{3}\eta_{12}$, neither of which are immediately generators 
of $\pi_{13}(S^{3})$. In general, two maps 
\(u\colon\namedright{S^{m}}{}{S^{n}}\) 
and 
\(v\colon\namedright{S^{m'}}{}{S^{n'}}\) 
satisfy $\Sigma^{n} v\circ\Sigma^{m'} u\simeq\pm\Sigma^{n'} u\circ\Sigma^{m} v$. 
In our case, we obtain $\epsilon_{5}\eta_{13}\simeq\eta_{5}\epsilon_{6}$ 
and $\mu_{5}\eta_{14}\simeq\eta_{5}\mu_{6}$, where the signs have disappeared 
since all composites are of order~$2$. Thus after two suspensions the image of $a$ 
consists of $\eta_{5}\epsilon_{6}\eta_{14}=\eta^{2}_{5}\epsilon_{7}$ and 
$\mu_{5}\eta_{14}=\eta_{5}\mu_{6}$. By~\cite[Lemma 6.6]{To}, 
$\eta^{2}_{5}\epsilon_{7}=2\Sigma^{2}\epsilon'$ and $\Sigma^{2}\epsilon'$ 
has order~$4$. Moreover, by~\cite[Theorem 7.3]{To}, the elements 
$\eta_{5}\mu_{6}$ and $\Sigma^{2}\epsilon'$ represent distinct generators 
in $\pi_{15}(S^{5})$. Thus after two suspensions $a$ induces an injection, 
implying that $a$ itself must be an injection.  

Let $x,y$ be generators for each of the 
summands in $\zmodfour\oplus\zmodtwo$. Since $a$ is an injection 
and consists of elements of order~$2$, the image is generated by 
$2sx+ty$ and $2s'x+t'y$ for $s,s',t,t'\in\zmodtwo$. The injectivity of $a$ implies 
that at least one of $t$ or $t'$ is equal to $1$. As $f=2\oplus id$, the image 
of $f\circ a$ is $ty$ and $t'y$. The commutativity of the leftmost square 
in~(\ref{pi9dgrm}) then implies that the image of $i$ is $ty$ and $ty'$. Therefore 
the image of $i$ in $\mathbb{Z}/4\mathbb{Z}\oplus\mathbb{Z}/2\mathbb{Z}$ 
is trivial when projected to $\mathbb{Z}/4\mathbb{Z}$ and onto when projected 
to $\mathbb{Z}/2\mathbb{Z}$. Thus, exactness 
along the bottom row of~(\ref{pi9dgrm}) implies that $j$ 
is an injection on the $\zmodfour$ summand and is trivial on the 
$\zmodtwo$ summand. 

Consider the map $\ell$. Since $h$ and $e$ are identity maps, it is equivalent 
to determine $d$. Explicitly, $d$ is the map 
\(\namedright{\pi_{9}(\Omega^{2} S^{3})}{(\Sigma\eta)^{\ast}}{\pi_{9}(\Omega^{3} S^{3})}\). 
Taking adjoints, $d$ sends a map 
\(\namedright{S^{11}}{s}{S^{3}}\) 
to the composite 
\(\nameddright{S^{12}}{\eta_{11}}{S^{11}}{s}{S^{3}}\). 
By~\cite{To}, $\pi_{11}(S^{3})\cong\zmodtwo$ is generated 
by $\epsilon_{3}$, and as above, $\pi_{12}(S^{3})\cong\zmodtwo\oplus\zmodtwo$ is 
generated by $\eta_{3}\epsilon_{4}$ and $\mu_{3}$. Observe that 
$d(\epsilon_{3})=\epsilon_{3}\eta_{11}$, so as before, after 
two suspensions the image of $d$ is $\epsilon_{5}\eta_{13}=\eta_{5}\epsilon_{6}$, 
which is nontrivial. Therefore after two suspensions $d$ induces an injection, 
implying that $d$ itself is an injection. Hence $\ell$ is an injection, implying 
that $k$ is the zero map and $j$ is onto. 
\end{proof} 

In what follows we work with $\mapstar(\cptwo,BSp(2))$, and note that the 
pointed exponential law implies that 
$\Omega\mapstar(\cptwo,BSp(2))\simeq\mapstar(\cptwo,Sp(2))$. By 
Lemma~\ref{pi3}, $\pi_{4}(\mapstar(\cptwo,BSp(2)))\cong\mathbb{Z}$. Let 
\(\gamma\colon\namedright{S^{4}}{}{\mapstar(\cptwo,BSp(2))}\) 
represent a generator. The second statement in Lemma~\ref{pi3} implies 
that there is a homotopy commutative diagram 
\begin{equation} 
  \label{thetadgrm} 
  \diagram 
      S^{4}\dto^{\underline{2}}\rto^-{\theta} & \Omega^{3} Sp(2)\dto \\ 
      S^{4}\rto^-{\gamma} & \mapstar(\cptwo,BSp(2)) 
  \enddiagram 
\end{equation}  
where $\theta$ represents a generator of $\pi_{4}(\Omega^{3} Sp(2))\cong\mathbb{Z}$. 

\begin{lemma} 
   \label{gammanu} 
   The composite 
   \(\nameddright{S^{7}}{\nu_{4}}{S^{4}}{\gamma}{\mapstar(\cptwo,BSp(2))}\) 
   has order~$8$. 
\end{lemma}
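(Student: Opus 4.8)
The plan is to compute the order of $\gamma\circ\nu_4$ by pulling the problem back along the factorization $\gamma\circ\underline{2}\simeq(\mbox{incl})\circ\theta$ recorded in diagram~(\ref{thetadgrm}), and then to play off the known homotopy of $\Omega^3 Sp(2)$ against the homotopy of $\mapstar(\cptwo,BSp(2))$ that was assembled in Lemmas~\ref{pi3}, \ref{pi6} and~\ref{pi9}. Since everything is $2$-local, ``order'' means the $2$-primary order, and the answer $8$ will come from showing the class is killed by $8$ but not by $4$.

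First I would pin down where $\gamma\circ\nu_4$ lives: it is an element of $\pi_7(\mapstar(\cptwo,BSp(2)))\cong\pi_6(\mapstar(\cptwo,Sp(2)))$, which by Lemma~\ref{pi6} is $\zmodeight$, with the map from $\pi_7(\Omega^4 Sp(2))\cong\pi_6(\Omega^3 Sp(2))$... wait, more precisely $\pi_7(\Omega^3 Sp(2))\cong\pi_{10}(Sp(2))\cong\zmodeight$ maps to this $\zmodeight$. So the target group already has order exactly $8$, and the whole content is to show $\gamma\circ\nu_4$ is a \emph{generator} rather than a proper multiple. The strategy is to relate $\gamma\circ\nu_4$ to the image of $\theta\circ\nu_4$, where $\theta$ generates $\pi_4(\Omega^3 Sp(2))\cong\mathbb{Z}$. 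From~(\ref{thetadgrm}) we get $\gamma\circ\underline{2}\circ\nu_4\simeq(\mbox{incl})\circ\theta\circ\nu_4$, i.e. $2(\gamma\circ\nu_4)$ equals the image under $\Omega^3 Sp(2)\to\mapstar(\cptwo,BSp(2))$ (equivalently $\Omega^4 Sp(2)\to\mapstar(\cptwo,Sp(2))$ after looping) of the element $\theta\circ\nu_4\in\pi_7(\Omega^3 Sp(2))\cong\zmodeight$. The class $\theta\circ\nu_4$ is the Whitehead-type product detecting $\nu$ in $\pi_{10}(Sp(2))$; one should check (using the $CW$-structure of $Sp(2)$, or the known computation in \cite{Th2} that identifies $\partial_1$ on the bottom cell via the order-$8$ generator $a$) that $\theta\circ\nu_4$ has order $4$ in $\zmodeight$, i.e. it is $2$ times a generator. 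Then $2(\gamma\circ\nu_4)$ has order $4$ in the target $\zmodeight$ provided the map $\pi_7(\Omega^3 Sp(2))\to\pi_7(\mapstar(\cptwo,BSp(2)))$ is injective on the relevant subgroup — which is exactly the isomorphism statement of Lemma~\ref{pi6}. Hence $\gamma\circ\nu_4$ has order $8$.

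The key steps in order: (1) identify the target as $\pi_6(\mapstar(\cptwo,Sp(2)))\cong\zmodeight$ via the looping isomorphism and Lemma~\ref{pi6}; (2) apply $-\circ\nu_4$ to~(\ref{thetadgrm}) to get $2(\gamma\circ\nu_4)=\iota_*(\theta\circ\nu_4)$ where $\iota$ is the edge map, which is an isomorphism on $\pi_6$ by Lemma~\ref{pi6}; (3) compute the order of $\theta\circ\nu_4$ in $\pi_7(\Omega^3 Sp(2))\cong\pi_{10}(Sp(2))\cong\zmodeight$, showing it is $4$; (4) conclude $2(\gamma\circ\nu_4)$ has order $4$, hence $\gamma\circ\nu_4$ has order $8$ (it cannot have order $16$ since the ambient group is $\zmodeight$, and it cannot have order $\le 4$ since its double has order $4$).

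The main obstacle is step~(3): showing that $\theta\circ\nu_4$ has order exactly $4$, not $8$ and not $2$, in $\pi_{10}(Sp(2))\cong\zmodeight$. This requires knowing precisely how the generator $\theta$ of $\pi_4(\Omega^3 Sp(2))$ interacts with $\nu_4$ under the identification $\pi_7(\Omega^3 Sp(2))\cong\pi_{10}(Sp(2))$. I would handle this by invoking the computation in \cite{Th2}: there $\partial_1$ restricted to $S^7\subset S^7\vee S^{10}$ is the order-$8$ generator $a$ of $\pi_{10}(Sp(2))$, and $\partial_1$ restricted to the bottom cell $S^3\subset Sp(2)$ factors (via Lemma~\ref{Lang}, $\partial_1=\langle i,1\rangle$) in a way that relates it through the attaching map $\nu'$ of the $7$-cell to $2a$ on the relevant sphere. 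Equivalently, since $\theta$ is the generator of $\pi_4(\Omega^3 Sp(2))\cong\mathbb{Z}\cong\pi_7(Sp(2))$ (the integral generator $i\circ\nu_3'$-type class, or rather the class hitting the $7$-cell), composing with $\nu_4$ lands in the $2$-torsion part of $\pi_{10}(Sp(2))$, and the James/Toda relation $\nu_7^2$-versus-attaching-map computation pins the order at $4$. A clean alternative is to note that $\gamma\circ\nu_4$, being in the image of the bottom-cell evaluation, corresponds under all the identifications to (the $2$-local component of) $\overline{\partial}_1$ composed with $\nu_4$, and the order-$8$ conclusion is consistent with — indeed feeds into — Theorem~\ref{maporder}; so one may also argue contrapositively that if $\gamma\circ\nu_4$ had order $\le 4$ then a multiple-of-$4$ g.c.d.\ condition would suffice in Theorem~\ref{count}, contradicting Theorem~\ref{Sp2gauge} via the naturality diagram~(\ref{natdgrm}).
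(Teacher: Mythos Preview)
Your argument contains two compensating errors that happen to yield the correct final answer, but the intermediate steps are false.

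\textbf{Error in step (2).} You write ``$\gamma\circ\underline{2}\circ\nu_4\simeq 2(\gamma\circ\nu_4)$''. This would require $\underline{2}\circ\nu_4\simeq\nu_4\circ\underline{2}$ in $\pi_7(S^4)$, which fails precisely because $\nu_4$ is \emph{not} a suspension: it has Hopf invariant one. The distributivity formula gives $\underline{2}\circ\nu_4=2\nu_4+[\iota_4,\iota_4]\circ H(\nu_4)=2\nu_4+[\iota_4,\iota_4]$, and since $[\iota_4,\iota_4]=\pm(2\nu_4-\Sigma\nu')$ one gets $\underline{2}\circ\nu_4=4\nu_4\mp\Sigma\nu'$, not $2\nu_4$. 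The paper exploits exactly this Hopf-invariant-one phenomenon to obtain $\underline{2}\circ\nu_4\simeq\nu_4\circ\underline{4}$ (up to the $\Sigma\nu'$ term), so that the diagram yields $4(\gamma\circ\nu_4)$ on the bottom, not $2(\gamma\circ\nu_4)$.

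\textbf{Error in step (3).} You claim $\theta\circ\nu_4$ has order $4$ in $\pi_{10}(Sp(2))\cong\zmodeight$; in fact it has order $2$. The adjoint of $\theta\circ\nu_4$ is $c\circ\nu_7$ where $c$ generates $\pi_7(Sp(2))\cong\mathbb{Z}$. From the long exact sequence of $S^3\to Sp(2)\xrightarrow{q}S^7$ one finds $q\circ c\simeq\pm 4\iota_7$, hence $q_*(c\circ\nu_7)=\pm 4\nu_7$, which has order $2$ in $\pi_{10}(S^7)\cong\zmodeight$. Since $q_*$ is an isomorphism on $\pi_{10}$, the element $c\circ\nu_7$ (and thus $\theta\circ\nu_4$) has order $2$. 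Your hand-waving appeal to ``the James/Toda relation'' does not substantiate the claimed order $4$.

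Putting the correct versions together gives the paper's argument: $4(\gamma\circ\nu_4)$ equals (up to the $\Sigma\nu'$ correction) the image of an order-$2$ element under the $\pi_7$-isomorphism of Lemma~\ref{pi6}, hence $4(\gamma\circ\nu_4)$ has order $2$ and $\gamma\circ\nu_4$ has order $8$. Your two factor-of-$2$ errors cancel numerically, but the reasoning is not valid as written.

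Finally, the ``clean alternative'' contrapositive at the end is circular: Lemma~\ref{gammanu} is used to prove Proposition~\ref{gammag}, which is used to prove Theorem~\ref{maporder}, so you cannot invoke Theorem~\ref{maporder} here.
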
 

\begin{proof} 
Consider the diagram 
\[\diagram 
      S^{7}\rto^-{\nu_{4}}\dto^{\underline{4}} & S^{4}\dto^{\underline{2}}\rto^-{\theta} 
          & \Omega^{3} Sp(2)\dto \\ 
      S^{7}\rto^-{\nu_{4}} & S^{4}\rto^-{\gamma} & \mapstar(\cptwo,BSp(2)).  
  \enddiagram\] 
The left square homotopy commutes since $\nu_{4}$ has Hopf invariant 
one. The right square homotopy commutes by~(\ref{thetadgrm}). We claim 
that the upper direction around this diagram is a map of order~$2$. This 
would imply that $\gamma\circ\nu_{4}\circ\underline{4}$ has order~$2$, 
implying in turn that $\gamma\circ\nu_{4}$ has order~$8$. 

It remains to show that the upper direction around the diagram has order~$2$. 
By Lemma~\ref{pi6}, the map 
\(\namedright{\Omega^{3} Sp(2)}{}{\mapstar(\cptwo,BSp(2))}\) 
induces an isomorphism on $\pi_{7}$, implying that it suffices to show that 
the map $\theta\circ\nu_{4}$ has order~$2$. If  
\(c\colon\namedright{S^{7}}{}{Sp(2)}\) 
is the adjoint of $\theta$, it is equivalent to show that $c\circ\nu_{7}$ 
has order~$2$. Consider the composite 
\[\namedddright{S^{10}}{\nu_{7}}{S^{7}}{c}{Sp(2)}{q}{S^{7}}.\] 
By~\cite{MT}, $q$ induces an isomorphism on $\pi_{10}$, so it 
suffices to show that $q\circ c\circ\nu_{7}$ has order~$2$. By~\cite{To}, 
$\pi_{10}(S^{7})\cong\zmodeight$ is generated by $\nu_{7}$, and by~\cite{MT}, 
$q\circ c\simeq\pm 4$. Thus $q\circ c\circ\nu_{7}\simeq\pm 4\nu_{7}$ has 
order~$2$. 
\end{proof} 

Recall that the map 
\(\namedright{S^{10}}{g}{S^{4}}\) 
in Lemma~\ref{Sp2cofib} is $2\nu_{4}^{2}$. 

\begin{proposition} 
   \label{gammag} 
   The composite 
   \(\nameddright{S^{10}}{g}{S^{4}}{\gamma}{\mapcptwo}\) 
   has order~$2$.~$\qqed$  
\end{proposition}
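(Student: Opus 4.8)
The plan is to reduce the statement to a composition‑order calculation in the homotopy groups of $Sp(2)$, exploiting the factorisations set up in Lemmas~\ref{gammanu}, \ref{pi6} and~\ref{pi9}.

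\emph{Step 1.} Since $S^{10}$ is a co‑$H$‑space, precomposition by $\gamma$ is additive, so $\gamma\circ g=\gamma\circ(2\nu_4^2)=2(\gamma\circ\nu_4^2)$. By Lemma~\ref{pi9}, $\pi_{10}(\mapcptwo)\cong\zmodfour$, so $2(\gamma\circ\nu_4^2)$ automatically has order dividing $2$. Hence it suffices to prove that $\gamma\circ\nu_4^2$ is a generator of $\pi_{10}(\mapcptwo)$, i.e.\ has order exactly $4$.

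\emph{Step 2.} By Lemma~\ref{gammanu} the class $\gamma\circ\nu_4$ has order $8$, so it generates $\pi_7(\mapcptwo)\cong\zmodeight$ (Lemma~\ref{pi6}); and by Lemma~\ref{pi6} the map $\pi^{\ast}\colon\pi_7(\Omega^3 Sp(2))\to\pi_7(\mapcptwo)$ is an isomorphism. Thus $\gamma\circ\nu_4=\pi^{\ast}(\omega)$ for a generator $\omega$ of $\pi_7(\Omega^3 Sp(2))\cong\pi_{10}(Sp(2))\cong\zmodeight$. Using $\nu_4^2=\nu_4\circ\nu_7$ and the naturality of precomposition, $\gamma\circ\nu_4^2=\pi^{\ast}(\omega)\circ\nu_7=\pi^{\ast}(\omega\circ\nu_7)$. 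By Lemma~\ref{pi9} the homomorphism $\pi^{\ast}\colon\pi_{10}(\Omega^3 Sp(2))\cong\zmodfour\oplus\zmodtwo\to\pi_{10}(\mapcptwo)\cong\zmodfour$ is surjective, carries the $\zmodfour$‑summand isomorphically, and kills the $\zmodtwo$‑summand; consequently $\pi^{\ast}(\omega\circ\nu_7)$ has order $4$ if and only if $\omega\circ\nu_7$ has order $4$ (an element of order $4$ in $\zmodfour\oplus\zmodtwo$ necessarily has its $\zmodfour$‑coordinate a generator). Passing to adjoints and writing $\omega$ also for its adjoint, a generator of $\pi_{10}(Sp(2))\cong\zmodeight$, the element $\omega\circ\nu_7$ corresponds to $\omega\circ\nu_{10}\in\pi_{13}(Sp(2))\cong\zmodfour\oplus\zmodtwo$. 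So the proposition is equivalent to the assertion that \emph{$\omega\circ\nu_{10}$ has order $4$ in $\pi_{13}(Sp(2))$}.

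\emph{Step 3.} I would prove this using the bundle $S^3\stackrel{i}{\longrightarrow}Sp(2)\stackrel{q}{\longrightarrow}S^7$ together with the tables of~\cite{To} and~\cite{MT}. Because $i$ induces an isomorphism on $\pi_{12}$, the boundary $\pi_{13}(S^7)\to\pi_{12}(S^3)$ vanishes, so $q_{\ast}\colon\pi_{13}(Sp(2))\to\pi_{13}(S^7)$ is onto. Since $q$ induces an isomorphism on $\pi_{10}$ (and $q\circ c\simeq\pm 4$, as in the proof of Lemma~\ref{gammanu}), $q_{\ast}(\omega)$ is a generator of $\pi_{10}(S^7)\cong\zmodeight$, so $q_{\ast}(\omega\circ\nu_{10})$ is an odd multiple of $\nu_7\circ\nu_{10}=\nu_7^2=\Sigma^3\nu_4^2$; two‑locally $\nu_4^2$, hence $\nu_7^2$, has order exactly $2$, so $q_{\ast}(\omega\circ\nu_{10})\neq 0$ and $\omega\circ\nu_{10}$ has order at least $2$. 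To upgrade this to order $4$ one then has to show that $\ker q_{\ast}$ — equivalently the image of $i_{\ast}\colon\pi_{13}(S^3)\to\pi_{13}(Sp(2))$ — contains no element of order $4$, so that the nonvanishing of $q_{\ast}(\omega\circ\nu_{10})$ forces the $\zmodfour$‑coordinate of $\omega\circ\nu_{10}$ to be a generator. This last point, namely that $q$ already detects the $\zmodfour$‑summand of $\pi_{13}(Sp(2))$, is the main obstacle: it requires the explicit values of $\pi_{13}(S^3)$, $\pi_{13}(S^7)$, $\pi_{13}(Sp(2))$ and of $i_{\ast}$ on $\pi_{13}$ from~\cite{To} and~\cite{MT}.

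\emph{A remark on one tempting shortcut.} Two‑locally $\Sigma\nu'\circ\nu_7=\Sigma(\nu'\circ\nu_6)=0$, since $\pi_9(S^3)$ is $3$‑torsion; hence $\underline{2}\circ\nu_4^2\simeq 4\nu_4^2$, so the $\nu_4^2$‑analogue of the square in the proof of Lemma~\ref{gammanu} commutes and yields $4(\gamma\circ\nu_4^2)=\pi^{\ast}(\theta\circ\nu_4^2)$, with $\theta\circ\nu_4^2$ adjoint to $c\circ\nu_7^2=4(\omega\circ\nu_{10})=0$. But this only re‑derives the automatic bound that $\gamma\circ\nu_4^2$ has order at most $4$, and does not by itself give the sharp conclusion; Step~3 is what makes the difference.
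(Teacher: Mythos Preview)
Your Steps~1 and~2 are exactly the paper's argument: lift $\gamma\circ\nu_{4}$ through $\pi^{\ast}$ to a generator $\lambda$ (your $\omega$) of $\pi_{7}(\Omega^{3}Sp(2))\cong\pi_{10}(Sp(2))$, and reduce the proposition to the claim that $\lambda\circ\nu_{7}$ (adjointly, $\omega\circ\nu_{10}$) has order~$4$ in $\pi_{13}(Sp(2))$, after which Lemma~\ref{pi9} transports this to $\mapcptwo$.

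The difference is in how that last claim is handled. The paper does not attempt your Step~3 at all: it simply quotes~\cite{MT} for the fact that $[\nu_{7}]\nu_{10}$ \emph{is} the order-$4$ generator of $\pi_{13}(Sp(2))\cong\zmodfour\oplus\zmodtwo$, whence any unit multiple $c\circ\nu_{10}$ also has order~$4$. Your fibration argument is a detour that, as you yourself note, is incomplete and in the end needs precisely the \cite{MT} input (namely that $i_{\ast}$ on $\pi_{13}$ is $2\oplus\mathrm{id}$, which is what forces $\ker q_{\ast}$ to contain no order-$4$ element). So Step~3 can be replaced by a one-line citation.

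One small slip in Step~3: two-locally $\nu_{4}^{2}\in\pi_{10}(S^{4})$ has order~$8$, not~$2$ (indeed the Hopf invariant sends it to the generator $\nu_{7}$ of $\pi_{10}(S^{7})\cong\zmodeight$). The conclusion you want, that $\nu_{7}^{2}\in\pi_{13}(S^{7})$ has order~$2$, is nonetheless correct since $\pi_{13}(S^{7})\cong\zmodtwo$; just don't deduce it from the order of $\nu_{4}^{2}$.
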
 

\begin{proof} 
Consider the diagram 
\begin{equation} 
  \label{gammagdgrm} 
  \diagram 
     S^{10}\rto^-{2\nu_{7}} & S^{7}\rto^-{\nu_{4}}\dto^{\lambda} 
         & S^{4}\dto^{\gamma} \\ 
     & \Omega^{3} Sp(2)\rto^-{j} & \mapstar(\cptwo,BSp(2)) 
  \enddiagram 
\end{equation} 
where $j$ is the usual map and $\lambda$ will be defined momentarily. 
By Lemma~\ref{pi6}, $\pi_{7}(\mapstar(\cptwo,BSp(2)))\cong\zmodeight$ 
so Lemma~\ref{gammanu} implies that $\gamma\circ\nu_{4}$ represents 
a generator. Lemma~\ref{pi6} also states that the map 
\(\namedright{\Omega^{3} Sp(2)}{}{\mapstar(\cptwo,BSp(2))}\) 
induces an isomorphism on $\pi_{7}$. Thus $\gamma\circ\nu_{4}$ 
lifts to a map 
\(\lambda\colon\namedright{S^{7}}{}{\Omega^{3} Sp(2)}\) 
which represents a generator of $\pi_{7}(\Omega^{3} Sp(2))\cong\zmodeight$ 
and makes~(\ref{gammagdgrm}) homotopy commute. 

Mimura and Toda chose a generator of $\pi_{10}(Sp(2))\cong\zmodeight$ 
they called $[\nu_{7}]$. Let 
\(c\colon\namedright{S^{10}}{}{Sp(2)}\) 
be the triple adjoint of $\lambda$, so $c$ is also a generator 
of $\pi_{10}(Sp(2))$. Then $c\simeq u\cdot[\nu_{7}]$, where $u\in\zmodeight$ 
is a unit. In~\cite{MT} it is also shown that $[\nu_{7}]\nu_{10}$ represents the order~$4$ 
generator in $\pi_{13}(Sp(2))\cong\zmodfour\oplus\zmodtwo$. Thus 
$c\circ\nu_{10}$ also has order~$4$. Taking adjoints, $\lambda\circ\nu_{7}$ 
has order~$4$. Now reconsider~(\ref{gammagdgrm}). By Lemma~\ref{pi9}, 
the map 
\(\namedright{\Omega^{3} Sp(2)}{}{\mapstar(\cptwo,BSp(2))}\) 
sends the order~$4$ generator of 
$\pi_{10}(\Omega^{3} Sp(2))\cong\zmodfour\oplus\zmodtwo$ to an 
order~$4$ generator of $\pi_{10}(\mapstar(\cptwo,BSp(2)))$. Thus 
$j\circ\lambda\circ\nu_{7}$ has order~$4$, implying that $j\circ\lambda\circ 2\nu_{7}$ 
has order~$2$. The homotopy commutativity of~(\ref{gammagdgrm}) therefore 
implies that $\gamma\circ\nu_{4}\circ 2\nu_{7}$ has order~$2$. That is, 
$\gamma\circ g$ has order~$2$. 
\end{proof} 

We will also need to know how 
\(\namedright{S^{7}}{\Sigma\nu'}{S^{4}}\) 
composes with $\gamma$. 

\begin{lemma} 
   \label{gammanuprime} 
   The composite 
   \(\nameddright{S^{7}}{\Sigma\nu'}{S^{4}}{\gamma}{\mapstar(\cptwo,BSp(2))}\) 
   has order at most~$2$. 
\end{lemma}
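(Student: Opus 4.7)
The plan is to mirror the proof of Lemma~\ref{gammanu}: use diagram~(\ref{thetadgrm}) to transfer the order calculation from $\mapcptwo$ back to $\Omega^{3}Sp(2)$, where triple adjunction lets us work in $\pi_{10}(Sp(2))$.

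First I would observe that since $\Sigma\nu'$ is a suspension, the degree-two self-maps of $S^{7}$ and $S^{4}$ commute past it, so
\[ 2(\gamma\circ\Sigma\nu') \simeq \gamma\circ\underline{2}\circ\Sigma\nu' \simeq j\circ\theta\circ\Sigma\nu', \]
where $j\colon\Omega^{3}Sp(2)\to\mapcptwo$ is the canonical vertical map on the right of~(\ref{thetadgrm}). Since Lemma~\ref{pi6} shows that $j$ induces an isomorphism on $\pi_{7}$, it suffices to prove that $\theta\circ\Sigma\nu'$ is null homotopic in $\pi_{7}(\Omega^{3}Sp(2))\cong\pi_{10}(Sp(2))$.

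Taking triple adjoints, let $c\colon S^{7}\to Sp(2)$ denote the triple adjoint of $\theta$, so that $c$ generates $\pi_{7}(Sp(2))\cong\mathbb{Z}$. Then the triple adjoint of $\theta\circ\Sigma\nu'$ is $c\circ\Sigma^{4}\nu'\in\pi_{10}(Sp(2))$. The classical Toda relation $\Sigma^{2}\nu'=2\nu_{5}$ in $\pi_{8}(S^{5})$ persists through two more suspensions to give $\Sigma^{4}\nu'=2\nu_{7}$ in $\pi_{10}(S^{7})$, so $c\circ\Sigma^{4}\nu'\simeq 2(c\circ\nu_{7})$. I am thus reduced to showing that $c\circ\nu_{7}$ has order at most $2$ in $\pi_{10}(Sp(2))_{(2)}\cong\zmodeight$.

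This last step is exactly the calculation already carried out inside the proof of Lemma~\ref{gammanu}: the projection $q\colon Sp(2)\to S^{7}$ satisfies $q\circ c\simeq\pm 4$ and induces an isomorphism on $\pi_{10}$ at the prime~$2$, so $q\circ c\circ\nu_{7}\simeq\pm 4\nu_{7}$ has order $2$ in $\pi_{10}(S^{7})_{(2)}\cong\zmodeight$, forcing $c\circ\nu_{7}$ to have order $2$ as well. Chasing back through the adjunctions and diagram~(\ref{thetadgrm}) then yields $2(\gamma\circ\Sigma\nu')\simeq\ast$, proving the lemma. The main potential obstacle, though a routine one, is the adjunction and suspension bookkeeping needed to identify $\Sigma^{4}\nu'$ with $2\nu_{7}$; once the Toda relation $\Sigma^{2}\nu'=2\nu_{5}$ is in hand, the rest follows automatically from the stability of the groups in question.
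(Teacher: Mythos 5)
Your proof is correct, but it takes a genuinely different route from the paper's. The paper's argument is shorter and more elementary: it first observes that $\pi_{5}(\mapstar(\cptwo,BSp(2)))\cong 0$ (from the fibration over $\Omega Sp(2)$, since $\pi_{5}(\Omega^{3}Sp(2))\cong\pi_{5}(\Omega Sp(2))\cong 0$), then invokes the Toda relation $2\nu'=\eta_{3}^{3}$ to factor $\gamma\circ\Sigma\nu'\circ\underline{2}$ through $\gamma\circ\eta_{4}\in\pi_{5}(\mapstar(\cptwo,BSp(2)))\cong 0$. You instead use the other Toda relation $\Sigma^{4}\nu'=2\nu_{7}$, push the problem through diagram~(\ref{thetadgrm}) and Lemma~\ref{pi6} into $\pi_{10}(Sp(2))_{(2)}\cong\zmodeight$, and then recycle the computation $q\circ c\circ\nu_{7}\simeq\pm 4\nu_{7}$ from the proof of Lemma~\ref{gammanu}. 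Your approach has the virtue of reusing the machinery already built for Lemma~\ref{gammanu} (so less new input is needed at the level of homotopy groups), at the cost of being longer; the paper's approach is self-contained and avoids the triple-adjoint bookkeeping entirely. One small remark: the invocation of Lemma~\ref{pi6} (that $j$ is an isomorphism on $\pi_{7}$) is not logically needed for the sufficiency claim --- nullity of $\theta\circ\Sigma\nu'$ trivially implies nullity of $j\circ\theta\circ\Sigma\nu'$ regardless --- but it does no harm, and your use of $q_{\ast}$ being an isomorphism on $\pi_{10}$ is exactly what makes the final order-two conclusion go through.
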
 

\begin{proof} 
First observe that the homotopy fibration 
\(\nameddright{\Omega^{3} Sp(2)}{}{\mapstar(\cptwo,BSp(2))}{}{\Omega Sp(2)}\) 
implies that there is an exact sequence 
\(\nameddright{\pi_{5}(\Omega^{3} Sp(2))}{}{\pi_{5}(\mapstar(\cptwo,BSp(2)))} 
     {}{\pi_{5}(\Omega Sp(2))}\). 
By~\cite{MT}, $\pi_{5}(\Omega^{3} Sp(2))\cong\pi_{5}(\Omega Sp(2))\cong 0$, 
implying that $\pi_{5}(\mapstar(\cptwo,BSp(2)))\cong 0$. 

Next, by~\cite{To}, $2\nu'=\eta_{3}^{3}$. Therefore the composite 
\(\namedddright{S^{7}}{\underline{2}}{S^{7}}{\Sigma\nu'}{S^{4}}{\gamma 
       }{\mapstar(\cptwo,BSp(2))}\) 
factors through the composite 
\(\nameddright{S^{5}}{\eta_{4}}{S^{4}}{\gamma}{\mapstar(\cptwo,BSp(2))}\). 
The latter is null homotopic since $\pi_{5}(\mapstar(\cptwo,BSp(2)))\cong 0$, 
and hence $\gamma\circ\Sigma\nu'\circ\underline{2}$ is null homotopic. 
Consequently, $\gamma\circ\Sigma\nu'$ has order at most~$2$. 
\end{proof} 

\begin{theorem} 
   \label{maporder} 
   The composite 
   \(\namedddright{Sp(2)}{\partial_{1}}{\Omega^{3}_{0} Sp(2)}{4}{\Omega^{3}_{0} Sp(2)} 
          {}{\mapstar_{0}(\cptwo,BSp(2))}\) 
   is nontrivial.  
\end{theorem}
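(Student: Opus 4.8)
The plan is to carry the composite through the cofibration of Lemma~\ref{Sp2cofib}, reducing the statement to a computation in $\pi_{7}$ and $\pi_{10}$ of $\mapstar_{0}(\cptwo,BSp(2))$ that the preceding results have already set up. Writing $\pi^{\ast}\colon\Omega^{3}_{0}Sp(2)\to\mapstar_{0}(\cptwo,BSp(2))$ for the map induced by the pinch, the composite in question is $\pi^{\ast}\circ 4\circ\partial_{1}$, and by Lemma~\ref{4partial1} it equals $\pi^{\ast}\circ 4a\circ p_{1}\circ\phi$, where $\phi\colon Sp(2)\to S^{7}\vee S^{10}$ is the map of Lemma~\ref{Sp2cofib}, $p_{1}$ is the pinch onto the summand $S^{7}$, and $a$ generates $\pi_{7}(\Omega^{3}_{0}Sp(2))\cong\zmodeight$. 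Since $\pi^{\ast}$ induces an isomorphism on $\pi_{7}$ by Lemma~\ref{pi6}, the class $\pi^{\ast}\circ a$ generates $\pi_{7}(\mapstar_{0}(\cptwo,BSp(2)))\cong\zmodeight$, and so does $\gamma\circ\nu_{4}$ by Lemma~\ref{gammanu}; hence $\pi^{\ast}\circ a\simeq w\cdot(\gamma\circ\nu_{4})$ for some unit $w\in\zmodeight$, so $\pi^{\ast}\circ 4a\simeq 4(\gamma\circ\nu_{4})$. It therefore suffices to prove that the map $\Psi\colon S^{7}\vee S^{10}\to\mapstar_{0}(\cptwo,BSp(2))$ which restricts to $4(\gamma\circ\nu_{4})$ on $S^{7}$ and to the constant map on $S^{10}$ does not become null homotopic after precomposition with $\phi$.

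Then I would apply Puppe exactness to the cofibration $\lnamedddright{S^{3}}{i}{Sp(2)}{\phi}{S^{7}\vee S^{10}}{\Sigma\nu'-g}{S^{4}}$ of Lemma~\ref{Sp2cofib}: the composite $\Psi\circ\phi$ is null homotopic if and only if $\Psi$ extends across $\Sigma\nu'-g$, that is, $\Psi\simeq\alpha\circ(\Sigma\nu'-g)$ for some $\alpha\colon S^{4}\to\mapstar_{0}(\cptwo,BSp(2))$. Since $\pi_{4}(\mapstar_{0}(\cptwo,BSp(2)))\cong\mathbb{Z}$ is generated by $\gamma$ by Lemma~\ref{pi3}, such an $\alpha$ is $m\gamma$ for some integer $m$; restricting to the two wedge summands of $S^{7}\vee S^{10}$ this forces the two conditions $m(\gamma\circ\Sigma\nu')\simeq 4(\gamma\circ\nu_{4})$ and $m(\gamma\circ g)\simeq\ast$. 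By Proposition~\ref{gammag} the class $\gamma\circ g$ has order~$2$, so the second condition makes $m$ even; but $\gamma\circ\Sigma\nu'$ has order at most~$2$ by Lemma~\ref{gammanuprime}, so for even $m$ the first condition reads $4(\gamma\circ\nu_{4})\simeq\ast$, contradicting that $\gamma\circ\nu_{4}$ has order~$8$ by Lemma~\ref{gammanu}. Hence no such $\alpha$ exists, $\Psi\circ\phi$ is nontrivial, and the theorem follows.

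The step where anything genuinely happens is the last one, and there the essential input is the \emph{nontriviality} of $\gamma\circ g$ rather than merely a bound on its order: since $\pi_{7}(\mapstar_{0}(\cptwo,BSp(2)))\cong\zmodeight$ has a unique element of order~$2$, namely $4(\gamma\circ\nu_{4})$, the possibility $\gamma\circ\Sigma\nu'\simeq 4(\gamma\circ\nu_{4})$ cannot be excluded directly, and it is Proposition~\ref{gammag} that forces $m$ even and so annihilates the $S^{7}$-component of any putative extension. The one routine point is to check that the map $Sp(2)\to S^{7}\vee S^{10}$ occurring in Lemma~\ref{4partial1} agrees, up to a self-equivalence of $S^{7}\vee S^{10}$, with the map $\phi$ of Lemma~\ref{Sp2cofib} --- both being the quotient of $Sp(2)$ by its bottom cell --- so that Puppe exactness may be applied.
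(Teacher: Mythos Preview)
Your argument is correct and follows the paper's strategy: reduce via Lemma~\ref{4partial1} and the cofibration of Lemma~\ref{Sp2cofib} to ruling out an extension $S^{4}\to\mapstar_{0}(\cptwo,BSp(2))$, then test against the two wedge summands using Lemmas~\ref{gammanu}, \ref{gammanuprime} and Proposition~\ref{gammag}; the only difference is that you treat the $S^{10}$ summand first (forcing $m$ even) and then $S^{7}$, whereas the paper does the reverse (first forcing $\theta\simeq\pm\gamma$ via $S^{7}$, then contradicting Proposition~\ref{gammag} via $S^{10}$). One small point worth tightening: since $g=2\nu_{4}^{2}$ is not a suspension, the $S^{10}$ condition is literally $(m\gamma)\circ g\simeq\ast$, and computing $\underline{m}\circ g$ in $\pi_{10}(S^{4})$ gives $m^{2}g$ rather than $mg$ --- but as $\gamma\circ g$ has order~$2$ this still forces $m$ even, so your conclusion is unaffected.
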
 

\begin{proof} 
By Lemma~\ref{4partial1} there is a homotopy commutative diagram 
\[\diagram 
       Sp(2)\rrto^-{\partial_{1}}\dto & & \Omega^{3}_{0} Sp(2)\dto^{4} \\ 
       S^{7}\vee S^{10}\rto^-{p_{1}} & S^{7}\rto^-{4a} & \Omega^{3}_{0} Sp(2)  
  \enddiagram\] 
and by Lemma~\ref{Sp2cofib} there is a homotopy cofibration 
\(\lnameddright{Sp(2)}{}{S^{7}\vee S^{10}}{\Sigma\nu'-g}{S^{4}}\). 
Aiming for a contradiction, suppose that the composite 
\(\namedddright{Sp(2)}{\partial_{1}}{\Omega^{3}_{0} Sp(2)}{4}{\Omega^{3}_{0} Sp(2)} 
          {}{\mapstar_{0}(\cptwo,BSp(2))}\) 
is null homotopic. Then there is an extension 
\begin{equation} 
  \label{extdgrm} 
  \diagram 
        S^{7}\vee S^{10}\rto^-{p_{1}}\dto^{\Sigma\nu'-g} & S^{7}\rto^-{4a} 
            & \Omega^{3}_{0} Sp(2)\dto \\ 
        S^{4}\rrto^-{\theta} & & \mapstar_{0}(\cptwo,BSp(2)) 
  \enddiagram 
\end{equation}  
for some map $\theta$. 

We claim that $\theta$ must represent a generator of 
$\pi_{4}(\mapstar_{0}(\cptwo,BSp(2)))\cong\mathbb{Z}$. 
Restrict~(\ref{extdgrm}) to the~$S^{7}$ summand of $S^{7}\vee S^{10}$. 
Since $a$ represents a generator of $\pi_{7}(\Omega^{3} Sp(2))\cong\zmodeight$, 
the map $4a$ has order~$2$. By Lemma~\ref{pi6} the map 
\(\namedright{\Omega^{3}_{0} Sp(2)}{}{\mapstar_{0}(\cptwo,BSp(2))}\) 
induces an isomorphism on $\pi_{7}$. Thus the upper direction 
around~(\ref{extdgrm}) has order~$2$ when restricted to $S^{7}$. 
Therefore the homotopy commutativity of the diagram implies that 
$\theta\circ\Sigma\nu'$ has order~$2$. If $\theta$ did not represent 
a generator of $\pi_{4}(\mapstar_{0}(\cptwo,BSp(2)))\cong\mathbb{Z}$, 
then as we are localized at $2$, we must have $\theta\simeq 2t\cdot\gamma$ 
for some integer~$t$. But then 
$\theta\circ\Sigma\nu'\simeq (t\gamma)\circ\underline{2}\circ\Sigma\nu'\simeq 
    (t\gamma)\circ\Sigma\nu'\circ\underline{2}$, 
where the right homotopy holds since~$\Sigma\nu'$ is a suspension. 
By Lemma~\ref{gammanuprime}, $\gamma\circ\Sigma\nu'\circ\underline{2}$ 
is null homotopic, implying that $\theta\circ\Sigma\nu'$ is null homotopic, 
a contradiction. Hence $\theta$ represents a generator of 
$\pi_{4}(\mapstar_{0}(\cptwo,BSp(2))\cong\mathbb{Z}$, so $\theta\simeq\pm\gamma$. 

Now restrict~(\ref{extdgrm}) to the $S^{10}$ summand of $S^{7}\vee S^{10}$. 
The upper direction around the diagram is null homotopic, implying that 
$\theta\circ g\simeq\pm\gamma\circ g$ is null homotopic. But this contradicts 
Proposition~\ref{gammag}. Hence the composite 
\(\namedddright{Sp(2)}{\partial_{1}}{\Omega^{3}_{0} Sp(2)}{4}{\Omega^{3}_{0} Sp(2)} 
          {}{\mapstar_{0}(\cptwo,BSp(2))}\) 
must be nontrivial. 
\end{proof}

\section{Preliminary information for Theorem~\ref{count}~(b)} 
\label{sec:prelim2} 

Now we turn to Theorem~\ref{count}~(b). A key ingredient is Theorem~\ref{Acount}, 
which states that if there is a homotopy equivalence $\gk(\cptwo)\simeq\g_{\ell}(\cptwo)$ 
(rather than a fibre homotopy equivalence) then $(20,k)=(20,\ell)$. The build-up 
to Theorem~\ref{Acount} takes most of the next two sections. 

In this section we establish $2$-primary 
properties of certain homotopy sets related to~$Sp(2)$ and $\gk(\cptwo)$. 
Localize all spaces and maps at $2$. Three sequences of spaces will be used. 
The first is the homotopy fibration sequence 
\begin{equation} 
  \label{Sp2fib} 
  \namedddright{\Omega S^{7}}{\delta}{S^{3}}{j}{Sp(2)}{}{S^{7}}. 
\end{equation} 
The second is the homotopy cofibration sequence 
\begin{equation} 
  \label{Acofib} 
  \nameddddright{S^{6}}{\nu'}{S^{3}}{i}{A}{q}{S^{7}}{\Sigma\nu'}{S^{4}}   
\end{equation} 
where $\nu'$ is the attaching map for the top cell of $A$, $i$ is the 
inclusion of the bottom cell and $q$ is the pinch map to the top cell.
Toda's notation is used for $\nu'$; it represents a generator of  
$\pi_{6}(S^{3})\cong\mathbb{Z}/4\mathbb{Z}$. 
Applying $\mapstar(\ \ ,BSp(2))$ to the homotopy cofibration sequence 
\[\nameddddright{S^{3}}{\eta}{S^{2}}{}{\cptwo}{}{S^{4}}{\Sigma\eta}{S^{3}}.\] 
gives the third sequence of spaces, the homotopy fibration sequence 
\begin{equation} 
   \label{etafib} 
   \nameddddright{\Omega^{2} Sp(2)}{(\Sigma\eta)^{\ast}}{\Omega^{3} Sp(2)} 
      {}{\mapstar(\cptwo,BSp(2))}{}{\Omega Sp(2)}{\eta^{\ast}}{\Omega^{2} Sp(2)}. 
\end{equation} 

To simplify notation, let $N=\mapstar(\cptwo,BSp(2))$. Applying $[A,\ \ ]$ 
to the homotopy fibration sequence~(\ref{etafib}) gives an exact sequence 
\begin{equation} 
  \label{Aexact} 
  \nameddddright{[A,\Omega^{2} Sp(2)]}{(\Sigma\eta)^{\ast}}{[A,\Omega^{3} Sp(2)]}{} 
      {[A,N]}{}{[A,\Omega Sp(2)]}{\eta^{\ast}}{[A,\Omega^{2} Sp(2)]}. 
\end{equation}  

The goal of this section is to calculate $[A,N]$. The starting point 
is the following result of Choi, Hirato and Mimura~\cite{CHM}. 

\begin{lemma} 
   \label{CHMlemma} 
   There is an isomorphism $[A,\Omega^{3} Sp(2)]\cong\mathbb{Z}/8\mathbb{Z}$ 
   and a representative of the generator is the composite 
   \(\nameddright{A}{s}{Sp(2)}{\partial_{1}}{\Omega^{3}_{0} Sp(2)}\). 
   $\qqed$ 
\end{lemma}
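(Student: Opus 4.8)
The plan is to compute $[A,\Omega^{3} Sp(2)]$ directly from the cell structure of $A$ and then to identify the resulting generator with $\partial_{1}\circ s$; all spaces and maps are localized at $2$, as throughout this section. Starting from the cofibration $S^{6}\stackrel{\nu'}{\longrightarrow}S^{3}\stackrel{i}{\longrightarrow}A\stackrel{q}{\longrightarrow}S^{7}$, whose Puppe sequence continues with $S^{7}\stackrel{\Sigma\nu'}{\longrightarrow}S^{4}$, applying $[-,\Omega^{3} Sp(2)]$ yields an exact sequence
\[
  [S^{4},\Omega^{3} Sp(2)]\stackrel{(\Sigma\nu')^{\ast}}{\longrightarrow}[S^{7},\Omega^{3} Sp(2)]\stackrel{q^{\ast}}{\longrightarrow}[A,\Omega^{3} Sp(2)]\stackrel{i^{\ast}}{\longrightarrow}[S^{3},\Omega^{3} Sp(2)].
\]
The three outer groups are homotopy groups of $Sp(2)$, which by~\cite{MT} (and as already used in the proofs of Lemmas~\ref{pi3} and~\ref{pi6}) are $[S^{3},\Omega^{3} Sp(2)]\cong\pi_{6}(Sp(2))=0$, $[S^{7},\Omega^{3} Sp(2)]\cong\pi_{10}(Sp(2))\cong\mathbb{Z}/8\mathbb{Z}$ and $[S^{4},\Omega^{3} Sp(2)]\cong\pi_{7}(Sp(2))\cong\mathbb{Z}$. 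Since the rightmost group vanishes $q^{\ast}$ is onto, so $[A,\Omega^{3} Sp(2)]$ is the cokernel of $(\Sigma\nu')^{\ast}$. Thus the first objective is to show that $(\Sigma\nu')^{\ast}$ is zero; granting this, $q^{\ast}$ becomes an isomorphism, $[A,\Omega^{3} Sp(2)]\cong\mathbb{Z}/8\mathbb{Z}$, and a generator is $q^{\ast}(a)$, where $a$ is the generator of $\pi_{7}(\Omega^{3}_{0} Sp(2))$ from the excerpt.

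To prove $(\Sigma\nu')^{\ast}=0$ I would pass to adjoints via $\mathrm{Map}(X,\Omega^{3}Y)\cong\mathrm{Map}(\Sigma^{3}X,Y)$, which turns $(\Sigma\nu')^{\ast}$ into the homomorphism $\pi_{7}(Sp(2))\to\pi_{10}(Sp(2))$ given by precomposition with $\Sigma^{4}\nu'\in\pi_{10}(S^{7})$. By the relation $[\iota_{4},\iota_{4}]\simeq\pm(2\nu_{4}-\Sigma\nu')$ of~\cite{To}, and since Whitehead products of the identity suspend trivially, $\Sigma^{4}\nu'\simeq 2\nu_{7}$ in $\pi_{10}(S^{7})\cong\mathbb{Z}/8\mathbb{Z}$; hence on a generator $\omega$ of $\pi_{7}(Sp(2))\cong\mathbb{Z}$ the homomorphism sends $\omega$ to $2(\omega\circ\nu_{7})$. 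It therefore suffices to show that $\omega\circ\nu_{7}$ has order~$2$ in $\pi_{10}(Sp(2))\cong\mathbb{Z}/8\mathbb{Z}$, and this is precisely what is established inside the proof of Lemma~\ref{gammanu}: there $\omega$ is the adjoint $c$ of $\theta$, and it is shown that $c\circ\nu_{7}$ has order~$2$ by pushing forward along the isomorphism $\pi_{10}(Sp(2))\stackrel{\cong}{\to}\pi_{10}(S^{7})$ induced by $Sp(2)\to S^{7}$, under which it becomes $\pm 4\nu_{7}$. Consequently $2(\omega\circ\nu_{7})=0$, so $(\Sigma\nu')^{\ast}=0$ and $[A,\Omega^{3} Sp(2)]\cong\mathbb{Z}/8\mathbb{Z}$.

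Finally, to identify the generator with $\partial_{1}\circ s$, I would use the factorization $\partial_{1}\simeq(a+b)\circ\phi$ of~\cite{Th2}, where $\phi\colon Sp(2)\to S^{7}\vee S^{10}$ is the map of Lemma~\ref{Sp2cofib}, namely (under the equivalence $Sp(2)/S^{3}\simeq S^{7}\vee S^{10}$) the quotient of $Sp(2)$ by its bottom cell. Restricting along the skeletal inclusion $s\colon A\hookrightarrow Sp(2)$: the composite $\phi\circ s\circ i$ equals $\phi$ precomposed with the bottom cell of $Sp(2)$, hence is null-homotopic, so $\phi\circ s$ factors through $A/S^{3}=S^{7}$; and since $S^{7}\hookrightarrow S^{7}\vee S^{10}$ is $9$-connected while $A$ is $7$-dimensional, the only such factorization (after matching the $S^{7}$-component of $\phi$ with the standard projection $Sp(2)\to S^{7}$) is $\phi\circ s\simeq(S^{7}\hookrightarrow S^{7}\vee S^{10})\circ q$. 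Therefore $\partial_{1}\circ s\simeq(a+b)\circ(S^{7}\hookrightarrow S^{7}\vee S^{10})\circ q=a\circ q=q^{\ast}(a)$, which is a generator of $[A,\Omega^{3} Sp(2)]\cong\mathbb{Z}/8\mathbb{Z}$ by the previous step.

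The step I expect to be the real content is the vanishing of $(\Sigma\nu')^{\ast}$, and inside it the fact that $\omega\circ\nu_{7}$ has order exactly~$2$ — rather than $4$ or $8$ — in $\pi_{10}(Sp(2))\cong\mathbb{Z}/8\mathbb{Z}$; this reflects the precise way the generator of $\pi_{7}(Sp(2))$ sits over $S^{7}$ in the fibration $S^{3}\to Sp(2)\to S^{7}$ (equivalently, the James number $12$), with all the relevant $2$-primary homotopy of $Sp(2)$ coming from~\cite{MT}. Everything else — the exact sequence, the adjunction, and the bookkeeping with $\phi$ — is formal. (Alternatively one could simply quote the group computation $[A,\Omega^{3} Sp(2)]\cong\mathbb{Z}/8\mathbb{Z}$ from~\cite{CHM} directly, but the argument above recovers it from material already in the paper.)
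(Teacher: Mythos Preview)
The paper does not prove this lemma at all: it is stated as a result of Choi, Hirato and Mimura~\cite{CHM} and marked with a terminal $\qqed$, so there is nothing to compare your argument against on the paper's side.

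Your reconstruction is correct and is a genuinely different route from simply citing~\cite{CHM}. The computation of the group via the Puppe sequence of $S^{6}\to S^{3}\to A\to S^{7}$ is sound; the key input, that precomposition with $\Sigma^{4}\nu'\simeq 2\nu_{7}$ kills a generator $\omega$ of $\pi_{7}(Sp(2))$, does follow from the fact (proved inside Lemma~\ref{gammanu}) that $\omega\circ\nu_{7}$ has order~$2$. For the identification of the generator, your use of the factorisation $\partial_{1}\simeq(a+b)\circ\phi$ is also fine: the coaction defining $\phi$ is trivial on the $7$-skeleton $A$, so $\phi\circ s$ is literally the composite $A\xrightarrow{s} Sp(2)\to S^{7}\hookrightarrow S^{7}\vee S^{10}$, which is $(\mathrm{incl}_{S^{7}})\circ q$ on the nose---you don't even need the connectivity/dimension argument you sketch. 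The upshot is that your proof recovers the cited result entirely from ingredients already present in Sections~\ref{sec:prelim1}--5, whereas the paper outsources it; the trade-off is that your argument leans on Lemma~\ref{gammanu}, which in the paper's logical order appears before Lemma~\ref{CHMlemma}, so the dependence is legitimate.
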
 

Given Lemma~\ref{CHMlemma}, to calculate $[A,N]$ in~(\ref{Aexact}) 
we determine the image of $(\Sigma\eta)^{\ast}$ and the kernel of~$\eta^{\ast}$. 
To do so we first collect some information on the homotopy groups of $S^{3}$, $S^{7}$ 
and $Sp(2)$. For $m\geq 3$, let 
\(\eta_{m}\colon\namedright{S^{m+1}}{}{S^{m}}\) 
be $\Sigma^{m-2}\eta$. 

\begin{lemma}[Toda~\cite{To}]  
   \label{Toda} 
   In the relevant dimensions the groups $\pi_{i}(S^{3})$ are:  
   \medskip\newline\medskip\hspace{1cm}  
   \begin{tabular}{|l|c|c|c|c|c|} 
            \hline 
            $i$ & $5$ & $6$ & $8$ & \hspace{2mm} $9$\hspace{2mm} 
                 & \hspace{2mm}$10$\hspace{2mm}\ \\ \hline
            \mbox{$2$-component} & $\mathbb{Z}/2\mathbb{Z}$
                 & $\mathbb{Z}/4\mathbb{Z}$ & $\mathbb{Z}/2\mathbb{Z}$ 
                 & $0$ & $0$ \\ \hline
            \mbox{generator} & $\eta_{3}^{2}$ & $\nu'$  
                 & $\nu^{\prime}\eta_{6}^{2}$ & -- & -- \\ \hline 
   \end{tabular} \newline 
   Further, by~\cite[Lemma 5.7]{To} the composite 
   \(\nameddright{S^{8}}{\Sigma^{2}\nu'}{S^{5}}{\eta_{3}^{2}}{S^{3}}\) 
   is null homotopic. 
 
   In the relevant dimensions the groups $\pi_{i}(S^{7})$ are: 
   \smallskip\newline\smallskip\hspace{1cm} 
   \begin{tabular}{|l|c|c|c|} 
      \hline 
      $i$ & $7$ & $10$ & \hspace{2mm}$11$\hspace{2mm} \\ \hline
       \mbox{$2$-component} & $\mathbb{Z}$ & $\mathbb{Z}/8\mathbb{Z}$ 
              & $0$ \\ \hline
       \mbox{generator} & $\iota_{7}$ & $\nu_{7}$ & -- \\ \hline 
  \end{tabular}  \medskip\newline  
   Further, $2\cdot\nu'_{7}\simeq\Sigma^{4}\nu'$.~$\qqed$  
\end{lemma}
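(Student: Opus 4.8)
The statement is a compilation of classical computations, so the plan is simply to harvest each assertion from Toda's book~\cite{To} and from elementary stable homotopy theory, being careful only about the compatibility of the chosen generators. First I would read off the $2$-primary parts of $\pi_{i}(S^{3})$ for $i=5,6,8,9,10$, together with the named generators $\eta_{3}^{2}$, $\nu'$ and $\nu'\eta_{6}^{2}$, directly from Toda's tables; all of these dimensions lie comfortably inside the range he treats, so nothing new is needed. For the auxiliary fact that the composite $\eta_{3}^{2}\circ\Sigma^{2}\nu'\colon S^{8}\to S^{3}$ is null homotopic I would cite~\cite[Lemma 5.7]{To}, as in the statement; as a consistency check, once one knows $\Sigma^{2}\nu'=\pm 2\nu_{5}$ in $\pi_{8}(S^{5})$ this composite equals $\pm 2(\eta_{3}^{2}\circ\nu_{5})$, which is zero because $\pi_{8}(S^{3})$ has exponent~$2$.

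Next I would record the $2$-primary homotopy of $S^{7}$ in the three dimensions needed: $\pi_{7}(S^{7})\cong\mathbb{Z}$ is generated by the identity, and since $10-7=3$ and $11-7=4$ are both $\leq 7-2$, the groups $\pi_{10}(S^{7})$ and $\pi_{11}(S^{7})$ are stable, so $\pi_{10}(S^{7})\cong\pi_{3}^{s}\cong\mathbb{Z}/24\mathbb{Z}$ with $2$-primary part $\mathbb{Z}/8\mathbb{Z}$ generated by $\nu_{7}$, while $\pi_{11}(S^{7})\cong\pi_{4}^{s}=0$. For the relation $2\nu'_{7}\simeq\Sigma^{4}\nu'$ (localized at~$2$), the point is that $\nu'$ generates the summand $\mathbb{Z}/4\mathbb{Z}$ of $\pi_{6}(S^{3})_{(2)}$ and that in the chain of suspensions $\pi_{6}(S^{3})\to\pi_{7}(S^{4})\to\pi_{8}(S^{5})\to\pi_{9}(S^{6})\to\pi_{10}(S^{7})$ each map carries the relevant class to an element again of order~$4$: the first by the description of $\pi_{7}(S^{4})$ recalled in Section~\ref{sec:intro}, the second via the classical relation among $[\iota_{4},\iota_{4}]$, $\nu_{4}$ and $\Sigma\nu'$ in $\pi_{7}(S^{4})$ (which forces $\Sigma^{2}\nu'=\pm 2\nu_{5}$), and the last two by Freudenthal. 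Thus $\Sigma^{4}\nu'$ is an order-$4$ element of the cyclic group $\pi_{10}(S^{7})_{(2)}\cong\mathbb{Z}/8\mathbb{Z}$, hence equals $\pm 2$ times its chosen generator $\nu'_{7}$; the sign is absorbed into that choice.

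I do not expect a genuine obstacle here: every ingredient already exists in the literature and the task is essentially bookkeeping. The closest thing to a hard part is arranging the generators of $\pi_{\ast}(S^{7})$ --- and, in the later applications, of $\pi_{\ast}(Sp(2))$ --- compatibly with the chosen generators of $\pi_{\ast}(S^{3})$ via the fibration~(\ref{Sp2fib}), so that the displayed relations hold on the nose and not merely up to an unspecified unit; and the identification $\Sigma^{2}\nu'=\pm 2\nu_{5}$ underlying the last relation is the one place where one reaches past a bare table lookup into Toda's structural results.
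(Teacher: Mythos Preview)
Your proposal is correct and matches the paper's approach: the lemma is stated in the paper with a bare $\qqed$ and no proof, since every assertion is quoted directly from Toda~\cite{To}. Your additional sanity checks (using $\Sigma^{2}\nu'=\pm 2\nu_{5}$ and the stable range for $\pi_{10}(S^{7})$, $\pi_{11}(S^{7})$) go slightly beyond what the paper records, but they are correct and harmless; the only cosmetic point is that the relation is $2\nu_{7}\simeq\Sigma^{4}\nu'$ (the prime on $\nu'_{7}$ in the displayed statement is a typo, as the usage in Lemma~\ref{Seta2} confirms).
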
 

\begin{lemma}[Mimura-Toda~\cite{MT}]  
   \label{MT} 
   In the relevant dimensions, the groups $\pi_{i}(Sp(2))$ are: 
   \medskip\newline\medskip\hspace{1cm} 
   \begin{tabular}{|l|c|c|c|c|c|c|c|} 
         \hline 
         $i$ & $4$ & $5$ & \hspace{1mm} $6$\hspace{1mm} & \hspace{2mm} $7$\hspace{2mm}  
               & \hspace{2mm} $8$\hspace{2mm} 
               & \hspace{2mm} $9$\hspace{2mm} & $13$ \\ \hline
         \mbox{$2$-component} & $\mathbb{Z}/2\mathbb{Z}$
               & $\mathbb{Z}/2\mathbb{Z}$ & $0$  
               & $\mathbb{Z}$ & $0$ & $0$ 
               & $\mathbb{Z}/4\mathbb{Z}\oplus\mathbb{Z}/2\mathbb{Z}$ \\ \hline
     \end{tabular} \newline 
    Further, the map 
    \(\namedright{S^{5}}{\eta_{4}}{S^{4}}\) 
    induces an isomorphism 
    \(\namedright{\pi_{4}(Sp(2))}{\cong}{\pi_{5}(Sp(2))}\).~$\qqed$ 
\end{lemma}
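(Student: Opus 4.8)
The plan is to read these low-degree $2$-primary homotopy groups off the long exact homotopy sequence of the principal $S^{3}$-bundle~(\ref{bigfib}), \(\nameddright{S^{3}}{j}{Sp(2)}{p}{S^{7}}\), feeding in the $2$-primary homotopy of $S^{3}$ and $S^{7}$ from Lemma~\ref{Toda} and Toda's tables~\cite{To}. Since this is the classical computation of Mimura and Toda~\cite{MT}, I will only describe the mechanism. Everything is controlled by one non-formal input, the connecting class $\partial(\iota_{7})\in\pi_{6}(S^{3})$: because $p$ restricts on the $7$-skeleton $A$ of $Sp(2)$ to the pinch map $\namedright{A}{q}{S^{7}}$ of~(\ref{Acofib}), this class is the attaching map of the $7$-cell of $Sp(2)$, which is $\nu'$ --- a generator of $\pi_{6}(S^{3})$, isomorphic to $\zmodfour$ after localizing at $2$. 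This is the fact recorded in the discussion preceding Lemma~\ref{Sp2cofib} (``$f_{1}=\nu'$''), and I would quote it rather than rederive it. Granting it, every connecting map $\partial\colon\pi_{m}(S^{7})\to\pi_{m-1}(S^{3})$ in the range at issue is computed by composing with $\nu'$: one has $\partial(\iota_{7})=\nu'$, $\partial(\eta_{7})=\nu'\eta_{6}$ and $\partial(\eta_{7}^{2})=\nu'\eta_{6}^{2}$.

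With that in hand one runs through the exact sequence degree by degree, and the entries through degree $9$ follow with no further input. For $i=4$ and $i=5$ the groups $\pi_{i}(S^{7})$ vanish in degrees $4,5,6$, so $j_{\ast}$ is an isomorphism and $\pi_{4}(Sp(2))\cong\pi_{4}(S^{3})\cong\zmodtwo$, $\pi_{5}(Sp(2))\cong\pi_{5}(S^{3})\cong\zmodtwo$. For $i=6$, the map $\partial\colon\pi_{7}(S^{7})\cong\mathbb{Z}\to\pi_{6}(S^{3})\cong\zmodfour$ is onto (its image contains $\nu'$), so $\pi_{6}(Sp(2))=0$ and $\ker\partial\cong\mathbb{Z}$; since moreover $\partial(\eta_{7})=\nu'\eta_{6}$ generates $\pi_{7}(S^{3})\cong\zmodtwo$, exactness forces $\pi_{7}(Sp(2))\cong\mathbb{Z}$. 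For $i=8$, $\partial(\eta_{7}^{2})=\nu'\eta_{6}^{2}$ generates $\pi_{8}(S^{3})\cong\zmodtwo$ while the adjacent $\partial$ into $\pi_{7}(S^{3})$ is onto, so $\pi_{8}(Sp(2))=0$; and for $i=9$, $\pi_{9}(S^{3})=0$ and the remaining $\partial$ is injective, so $\pi_{9}(Sp(2))=0$.

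The degree $13$ entry is the real obstacle. Here $\pi_{13}(S^{7})\cong\zmodtwo$, and its image under $\partial$ in $\pi_{12}(S^{3})$ is zero (the value is a composite factoring through $\pi_{9}(S^{3})=0$), so $p_{\ast}$ carries $\pi_{13}(Sp(2))$ onto $\zmodtwo$ and $\pi_{13}(Sp(2))$ is an extension of $\zmodtwo$ by $\mathrm{coker}\,(\partial\colon\pi_{14}(S^{7})\to\pi_{13}(S^{3}))$. Identifying this cokernel as $\zmodfour$ and showing the extension splits as $\zmodfour\oplus\zmodtwo$ requires the careful bookkeeping of compositions in $\pi_{\ast}(S^{3})$, and of the structure of $\pi_{14}(S^{7})$, carried out in~\cite{MT}; this is where I would rely on Mimura and Toda. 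Finally, the closing assertion is immediate from the $i=4,5$ analysis: $j_{\ast}$ is an isomorphism on $\pi_{4}$ and $\pi_{5}$, precomposition with $\eta_{4}$ sends the generator $\eta_{3}$ of $\pi_{4}(S^{3})$ to $\eta_{3}\eta_{4}=\eta_{3}^{2}$, a generator of $\pi_{5}(S^{3})$, and $\eta_{4}^{\ast}$ commutes with $j_{\ast}$; hence $\eta_{4}$ induces an isomorphism $\pi_{4}(Sp(2))\to\pi_{5}(Sp(2))$.
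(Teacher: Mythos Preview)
The paper does not prove this lemma at all: it is stated with a $\qqed$ and attributed to Mimura--Toda~\cite{MT}, so there is no ``paper's own proof'' to compare against. Your sketch is nonetheless a correct reconstruction of the classical argument. The identification $\partial(\iota_{7})=\nu'$ is exactly the input recorded before Lemma~\ref{Sp2cofib}, and from it the entries for $i\leq 9$ fall out of the long exact sequence of~(\ref{Sp2fib}) just as you describe; your justification that $\partial(\nu_{7}^{2})$ vanishes at~$2$ is also sound, since the adjoint of $\nu_{7}^{2}$ lies in the image of $E_{\ast}\colon\pi_{12}(S^{6})\to\pi_{12}(\Omega S^{7})$ (it is $E_{\ast}(\nu_{6}^{2})$), so $\partial(\nu_{7}^{2})=\nu'\circ\nu_{6}^{2}$ factors through $\nu'\nu_{6}\in\pi_{9}(S^{3})$, which has trivial $2$-component. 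You are right that resolving the extension at $i=13$ requires the finer composition analysis carried out in~\cite{MT}, and deferring to that reference there is appropriate. In short, your proposal supplies what the paper deliberately omits, by the standard route.
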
  

We first aim towards Proposition~\ref{etastar}, which describes the 
map $\eta^{\ast}$ in~(\ref{Aexact}). 

\begin{lemma} 
   \label{eta1} 
   The map 
   \(\namedright{S^{3}}{i}{A}\) 
   induces an isomorphism 
   \(\namedright{[A,\Omega Sp(2)]}{i^{\ast}}{[S^{3},\Omega Sp(2)]}\). 
\end{lemma}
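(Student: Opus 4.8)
The plan is to apply the contravariant functor $[-,\Omega Sp(2)]$ to the Puppe extension of the cofibration $\nameddright{S^{6}}{\nu'}{S^{3}}{i}{A}$ appearing in~(\ref{Acofib}), namely
\(\namedddright{S^{6}}{\nu'}{S^{3}}{i}{A}{q}{S^{7}}\),
and read off the consequences. Applying $[-,\Omega Sp(2)]$ gives an exact sequence
\[[S^{7},\Omega Sp(2)]\stackrel{q^{\ast}}{\longrightarrow}[A,\Omega Sp(2)]\stackrel{i^{\ast}}{\longrightarrow}[S^{3},\Omega Sp(2)]\stackrel{(\nu')^{\ast}}{\longrightarrow}[S^{6},\Omega Sp(2)].\]
Since $\Omega Sp(2)$ is a loop space, every term here is an abelian group and every map is a group homomorphism; in particular $(\nu')^{\ast}$ is a homomorphism because precomposition with a fixed map respects the loop multiplication on the target. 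Using the loop–space adjunction I would identify $[S^{7},\Omega Sp(2)]\cong\pi_{8}(Sp(2))$, $[S^{3},\Omega Sp(2)]\cong\pi_{4}(Sp(2))$ and $[S^{6},\Omega Sp(2)]\cong\pi_{7}(Sp(2))$, with the group structures matching the usual ones.

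Next I would feed in the $2$-local homotopy groups of $Sp(2)$ from Lemma~\ref{MT} (recall all spaces and maps in this section are localized at~$2$): $\pi_{8}(Sp(2))\cong 0$, $\pi_{4}(Sp(2))\cong\mathbb{Z}/2\mathbb{Z}$ and $\pi_{7}(Sp(2))\cong\mathbb{Z}$. The vanishing of $[S^{7},\Omega Sp(2)]\cong\pi_{8}(Sp(2))$ forces $q^{\ast}=0$, so exactness at $[A,\Omega Sp(2)]$ gives that $i^{\ast}$ is injective. For surjectivity, observe that $(\nu')^{\ast}$ is a homomorphism from $\mathbb{Z}/2\mathbb{Z}$ into the torsion-free group $\mathbb{Z}$ (that is, $\mathbb{Z}_{(2)}$), and is therefore the zero map. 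Exactness at $[S^{3},\Omega Sp(2)]$ then gives $\mathrm{im}(i^{\ast})=\ker((\nu')^{\ast})=[S^{3},\Omega Sp(2)]$, so $i^{\ast}$ is onto. Combining, $i^{\ast}$ is an isomorphism, which is the assertion of the lemma.

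The only step carrying any real content is the triviality of $(\nu')^{\ast}$, and this is exactly where it matters that the maps land in the loop space $\Omega Sp(2)$ rather than in a non-$H$-space: additivity of precomposition is what reduces the claim to the trivial observation that $\mathrm{Hom}(\mathbb{Z}/2\mathbb{Z},\mathbb{Z})=0$. If one wished to avoid invoking the $H$-structure for this point, the same conclusion follows by noting that the generator of $\pi_{4}(Sp(2))$ is $\iota\circ\eta_{3}$, where $\iota=j\circ i$ is the bottom-cell inclusion of $Sp(2)$, so that the relevant element of $\pi_{7}(Sp(2))$ is $\iota_{\ast}(\eta_{3}\circ\Sigma\nu')$, which is torsion (as $\pi_{7}(S^{3})$ is finite) and hence zero in $\pi_{7}(Sp(2))\cong\mathbb{Z}$. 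Everything else is routine bookkeeping with the adjunction and the homotopy-group tables already recorded in Lemmas~\ref{Toda} and~\ref{MT}.
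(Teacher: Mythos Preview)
Your proof is correct and follows essentially the same route as the paper: both apply $[-,\Omega Sp(2)]$ to the cofibration sequence~(\ref{Acofib}), identify the outer terms via $\pi_{8}(Sp(2))\cong 0$, $\pi_{4}(Sp(2))\cong\mathbb{Z}/2\mathbb{Z}$, $\pi_{7}(Sp(2))\cong\mathbb{Z}$, and conclude that $(\nu')^{\ast}$ vanishes because a homomorphism from a torsion group to a torsion-free group is zero. Your additional remarks about group structures and the alternative argument via $\iota_{\ast}(\eta_{3}\circ\Sigma\nu')$ are sound elaborations but not needed beyond what the paper uses.
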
 

\begin{proof} 
The homotopy cofibration~(\ref{Acofib}) induces an exact sequence of groups 
\[\namedddright{[S^{7},\Omega Sp(2)]}{q^{\ast}}{[A,\Omega Sp(2)]}{i^{\ast}} 
       {[S^{3},\Omega Sp(2)]}{(\nu')^{\ast}}{[S^{6},\Omega Sp(2)]}.\] 
Identifying the first, third and fourth terms using Lemma~\ref{MT} we obtain 
an exact sequence 
\[\namedddright{0}{q^{\ast}}{[A,\Omega Sp(2)]}{i^{\ast}} 
       {\mathbb{Z}/2\mathbb{Z}}{(\nu')^{\ast}}{\mathbb{Z}}.\] 
Since $\mathbb{Z}$ is torsion-free, $(\nu')^{\ast}$ must be 
the zero map. The lemma now follows. 
\end{proof} 

\begin{lemma} 
   \label{eta2} 
   The map 
   \(\namedright{S^{3}}{i}{A}\) 
   induces an isomorphism 
   \(\namedright{[A,\Omega^{2} Sp(2)]}{i^{\ast}}{[S^{3},\Omega^{2} Sp(2)]}\). 
\end{lemma}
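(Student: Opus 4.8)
The plan is to mimic the proof of Lemma~\ref{eta1}, replacing $\Omega Sp(2)$ by $\Omega^{2} Sp(2)$. Applying $[\ \ ,\Omega^{2} Sp(2)]$ to the homotopy cofibration sequence~(\ref{Acofib}) gives an exact sequence
\[\namedddright{[S^{7},\Omega^{2} Sp(2)]}{q^{\ast}}{[A,\Omega^{2} Sp(2)]}{i^{\ast}}
       {[S^{3},\Omega^{2} Sp(2)]}{(\nu')^{\ast}}{[S^{6},\Omega^{2} Sp(2)]}.\]
Using the pointed exponential law to rewrite $[S^{m},\Omega^{2} Sp(2)]\cong\pi_{m+2}(Sp(2))$, the relevant groups are read off from Lemma~\ref{MT}: $\pi_{9}(Sp(2))\cong 0$, $\pi_{5}(Sp(2))\cong\mathbb{Z}/2\mathbb{Z}$ and $\pi_{8}(Sp(2))\cong 0$. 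Thus the exact sequence becomes
\[\namedddright{0}{q^{\ast}}{[A,\Omega^{2} Sp(2)]}{i^{\ast}}
       {\mathbb{Z}/2\mathbb{Z}}{(\nu')^{\ast}}{0}.\]
Exactness at the first and last visible terms immediately forces $i^{\ast}$ to be surjective (since $q^{\ast}$ has trivial source) and injective (since $(\nu')^{\ast}$ has trivial target, so its kernel is everything, which by exactness equals the image of $i^{\ast}$, giving that $i^{\ast}$ is onto $\mathbb{Z}/2\mathbb{Z}$; and injectivity follows because the term preceding $[A,\Omega^2 Sp(2)]$ is $0$). Hence $i^{\ast}$ is an isomorphism.

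The only subtlety — and it is the one genuinely new input compared with Lemma~\ref{eta1} — is the vanishing of $\pi_{8}(Sp(2))$ localized at $2$, which is where $(\nu')^{\ast}$ lands; this is exactly the column recorded in Lemma~\ref{MT}, so no additional work is required. I would therefore present the argument in two sentences: set up the exact sequence from~(\ref{Acofib}), identify the three outer terms via Lemma~\ref{MT} (noting $\pi_{9}(Sp(2))=\pi_{8}(Sp(2))=0$ at the prime $2$), and conclude that $i^{\ast}$ is sandwiched between two zero groups and is therefore an isomorphism. There is essentially no obstacle here; the lemma is a bookkeeping step that parallels the previous one, differing only in which homotopy groups of $Sp(2)$ are invoked.
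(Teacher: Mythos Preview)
Your proof is correct and follows exactly the same route as the paper: apply $[\,\cdot\,,\Omega^{2}Sp(2)]$ to the cofibration~(\ref{Acofib}), identify the outer terms via Lemma~\ref{MT} as $\pi_{9}(Sp(2))\cong 0$, $\pi_{5}(Sp(2))\cong\mathbb{Z}/2\mathbb{Z}$, $\pi_{8}(Sp(2))\cong 0$, and conclude. One small expository slip: the parenthetical after ``surjective'' actually justifies injectivity (trivial source of $q^{\ast}$ gives $\ker i^{\ast}=0$), and vice versa; you sort this out by the end of the sentence, but it would read more cleanly if the labels were swapped.
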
 

\begin{proof} 
The homotopy cofibration~(\ref{Acofib}) induces an exact sequence of groups 
\[\namedddright{[S^{7},\Omega^{2} Sp(2)]}{q^{\ast}}{[A,\Omega^{2} Sp(2)]}{i^{\ast}} 
       {[S^{3},\Omega^{2} Sp(2)]}{(\nu')^{\ast}}{[S^{6},\Omega^{2} Sp(2)]}.\] 
Identifying the first, third and fourth terms using Lemma~\ref{MT} we obtain 
an exact sequence 
\[\namedddright{0}{q^{\ast}}{[A,\Omega^{2} Sp(2)]}{i^{\ast}}{\mathbb{Z}/2\mathbb{Z}} 
         {(\nu')^{\ast}}{0}.\] 
Therefore $i^{\ast}$ is an isomorphism. 
\end{proof} 

\begin{proposition} 
   \label{etastar} 
   The map 
   \(\namedright{\Omega Sp(2)}{\eta^{\ast}}{\Omega^{2} Sp(2)}\) 
   induces an isomorphism 
   \(\namedright{[A,\Omega Sp(2)]}{\eta^{\ast}}{[A,\Omega^{2} Sp(2)]}\). 
\end{proposition}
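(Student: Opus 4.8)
The plan is to compare the actions of $\eta^\ast$ and $i^\ast$ using the naturality square that comes from the bottom-cell inclusion $i\colon S^3\to A$. Concretely, I would set up the commutative square
\[
\diagram
   [A,\Omega Sp(2)]\rto^-{\eta^\ast}\dto^{i^\ast} & [A,\Omega^2 Sp(2)]\dto^{i^\ast} \\
   [S^3,\Omega Sp(2)]\rto^-{\eta^\ast} & [S^3,\Omega^2 Sp(2)]
\enddiagram
\]
where the bottom $\eta^\ast$ is precomposition with $\eta\colon S^4\to S^3$, so it is the map $\pi_4(\Omega Sp(2))\to\pi_5(\Omega Sp(2))$, i.e. $\pi_5(Sp(2))\to\pi_6(Sp(2))$. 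By Lemma~\ref{eta1} and Lemma~\ref{eta2} both vertical maps $i^\ast$ are isomorphisms, so to prove the proposition it suffices to show the bottom horizontal $\eta^\ast$ is an isomorphism.

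Next I would identify the bottom map explicitly. Under the exponential law $[S^3,\Omega Sp(2)]\cong\pi_5(Sp(2))$ and $[S^3,\Omega^2 Sp(2)]\cong\pi_6(Sp(2))$, but it is cleaner to keep one loop coordinate and work with $[S^4,Sp(2)]\xrightarrow{\eta^\ast}[S^5,Sp(2)]$; the map sends a class $f\colon S^4\to Sp(2)$ to $f\circ\eta_4$. Now Lemma~\ref{MT} records exactly the fact we need: the map $\eta_4\colon S^5\to S^4$ induces an isomorphism $\pi_4(Sp(2))\xrightarrow{\cong}\pi_5(Sp(2))$. (Both groups are $\mathbb{Z}/2\mathbb{Z}$ by the table in Lemma~\ref{MT}.) But precomposition with $\eta_4$ on $[S^4,Sp(2)]$ is, up to the sign conventions relating $f\circ\eta_4$ and $(\eta_4)_\ast$ applied in $\pi_\ast$, literally the statement that $\eta_4$ induces $\pi_4(Sp(2))\to\pi_5(Sp(2))$; one has to be mildly careful because $\eta^\ast$ here is composition on the source side, while Lemma~\ref{MT} phrases it as the induced map on homotopy groups, but since $S^4$ is a sphere these agree. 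Hence the bottom $\eta^\ast$ is an isomorphism, and chasing the square gives that the top $\eta^\ast\colon[A,\Omega Sp(2)]\to[A,\Omega^2 Sp(2)]$ is an isomorphism.

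The only genuine subtlety, and the step I would be most careful about, is the bookkeeping between "$\eta^\ast$ as precomposition on mapping sets out of $A$" and "$\eta_4$ inducing a map on homotopy groups of $Sp(2)$": one must check that the loop-desuspension of $\eta^\ast\colon[A,\Omega Sp(2)]\to[A,\Omega^2 Sp(2)]$, after applying the isomorphisms $i^\ast$, is indeed precomposition with $\eta_4\colon S^5\to S^4$ on $[S^4,Sp(2)]$, and that this precomposition map is conjugate (via the exponential law and the standard identification $[S^n,X]\cong\pi_n(X)$) to $(\eta_4)_\ast\colon\pi_4(Sp(2))\to\pi_5(Sp(2))$. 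This is standard but worth stating cleanly. Once that identification is in place, Lemma~\ref{MT} does all the work and the proof is immediate.
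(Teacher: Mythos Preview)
Your approach is essentially identical to the paper's: the same naturality square (yours is the transpose of the paper's), the same appeal to Lemmas~\ref{eta1} and~\ref{eta2} for the $i^\ast$ isomorphisms, and the same use of Lemma~\ref{MT} to identify the remaining edge with the isomorphism $\pi_4(Sp(2))\to\pi_5(Sp(2))$ induced by $\eta_4$. One small slip: your parenthetical ``$\pi_4(\Omega Sp(2))\to\pi_5(\Omega Sp(2))$, i.e.\ $\pi_5(Sp(2))\to\pi_6(Sp(2))$'' is off by one, since $[S^3,\Omega Sp(2)]\cong\pi_3(\Omega Sp(2))\cong\pi_4(Sp(2))$; but you immediately revert to the correct identification $[S^4,Sp(2)]\to[S^5,Sp(2)]$, so the argument goes through.
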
 

\begin{proof} 
Consider the commutative diagram 
\begin{equation} 
   \label{etastardgrm} 
   \diagram 
        [A,\Omega Sp(2)]\rto^-{a}\dto^{b} 
            & [S^{3},\Omega Sp(2)]\dto^{d} \\ 
        [A,\Omega^{2} Sp(2)]\rto^-{c} & [S^{3},\Omega^{2} Sp(2)]. 
   \enddiagram 
\end{equation} 
Here, $a$ and $c$ are induced by $i^{\ast}$ while $b$ and $c$ 
are induced by $\eta^{\ast}$. By Lemma~\ref{eta1}, $a$ is 
an isomorphism; by Lemma~\ref{eta2}, $c$ is an isomorphism. 
Observe that $d$ is the map 
\(\namedright{\pi_{4}(Sp(2))}{}{\pi_{5}(Sp(2))}\) 
induced by precomposing with 
\(\namedright{S^{5}}{\eta_{4}}{S^{4}}\), 
which is an isomorphism by Lemma~\ref{MT}. Thus $d$ is an isomorphism. 
The commutativity of~(\ref{etastardgrm}) therefore implies that $b$ is 
an isomorphism as well, proving the lemma. 
\end{proof} 

Next, we aim towards Proposition~\ref{Setastar}, which describes 
the map $(\Sigma\eta)^{\ast}$ in~(\ref{Aexact}). 

\begin{lemma} 
   \label{Seta1} 
   The map 
   \(\namedright{S^{3}}{i}{A}\) 
   induces an isomorphism 
   \(\namedright{[A,\Omega^{2} S^{3}]}{i^{\ast}} 
         {[S^{3},\Omega^{2} S^{3}]\cong\mathbb{Z}/2\mathbb{Z}}\). 
\end{lemma} 

\begin{proof} 
The homotopy cofibration~(\ref{Acofib}) induces an exact sequence of groups 
\[\namedddright{[S^{7},\Omega^{2} S^{3}]}{q^{\ast}}{[A,\Omega^{2} S^{3}]}{i^{\ast}} 
       {[S^{3},\Omega^{2} S^{3}]}{(\nu')^{\ast}}{[S^{6},\Omega^{2} S^{3}]}.\] 
Identifying the first, third and fourth terms using Lemma~\ref{Toda} we obtain 
an exact sequence 
\[\namedddright{0}{q^{\ast}}{[A,\Omega^{2} S^{3}]}{i^{\ast}} 
       {\mathbb{Z}/2\mathbb{Z}}{(\nu')^{\ast}}{\mathbb{Z}/2\mathbb{Z}}.\] 
The generator of $\pi_{5}(S^{3})$ is $\eta_{3}^{2}$, so 
$(\nu')^{\ast}(\eta_{3}^{2})$ is the composite 
\(\nameddright{S^{8}}{\Sigma^{2}\nu'}{S^{5}}{\eta_{3}^{2}}{S^{3}}\), 
which is null homotopic by Lemma~\ref{Toda}. Thus $(\nu')^{\ast}$ is the 
zero map. The lemma now follows. 
\end{proof} 

\begin{lemma} 
   \label{Seta2} 
   The map 
   \(\namedright{S^{3}}{i}{A}\) 
   induces multiplication by $4$ in the map   
   \(\namedright{\mathbb{Z}\cong [A,\Omega^{4} S^{7}]}{i^{\ast}} 
         {[S^{3},\Omega^{4} S^{7}]\cong\mathbb{Z}}\).  
\end{lemma}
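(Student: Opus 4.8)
The plan is to follow exactly the template of Lemmas~\ref{Seta1} and~\ref{eta1}--\ref{eta2}, with $\Omega^{4}S^{7}$ in place of $\Omega^{2}S^{3}$. Applying $[\ \ ,\Omega^{4}S^{7}]$ to the homotopy cofibration sequence~(\ref{Acofib}) --- of which the relevant portion is the cofibre sequence $S^{6}\stackrel{\nu'}{\longrightarrow}S^{3}\stackrel{i}{\longrightarrow}A\stackrel{q}{\longrightarrow}S^{7}$ --- yields an exact sequence of groups
\[\namedddright{[S^{7},\Omega^{4}S^{7}]}{q^{\ast}}{[A,\Omega^{4}S^{7}]}{i^{\ast}}
   {[S^{3},\Omega^{4}S^{7}]}{(\nu')^{\ast}}{[S^{6},\Omega^{4}S^{7}]}.\]
I would then use the adjunction identity $[S^{m},\Omega^{4}S^{7}]\cong\pi_{m+4}(S^{7})$ and read off from Lemma~\ref{Toda} that $[S^{7},\Omega^{4}S^{7}]\cong\pi_{11}(S^{7})\cong 0$, that $[S^{3},\Omega^{4}S^{7}]\cong\pi_{7}(S^{7})\cong\mathbb{Z}$ with generator $\iota_{7}$, and that $[S^{6},\Omega^{4}S^{7}]\cong\pi_{10}(S^{7})\cong\mathbb{Z}/8\mathbb{Z}$ with generator $\nu_{7}$. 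The sequence then reduces to
\[\namedddright{0}{}{[A,\Omega^{4}S^{7}]}{i^{\ast}}{\mathbb{Z}}{(\nu')^{\ast}}{\mathbb{Z}/8\mathbb{Z}},\]
so $i^{\ast}$ is injective and its image is the kernel of $(\nu')^{\ast}$.

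The only genuine computation is to determine $(\nu')^{\ast}$. Taking adjoints, $(\nu')^{\ast}$ carries $\iota_{7}$ to the composite $\iota_{7}\circ\Sigma^{4}\nu'=\Sigma^{4}\nu'\in\pi_{10}(S^{7})$, and by the suspension relation recorded in Lemma~\ref{Toda} this equals $2\nu_{7}$. Hence $(\nu')^{\ast}$ is multiplication by $2$ into $\mathbb{Z}/8\mathbb{Z}$, its image is the cyclic subgroup of order $4$ generated by $2\nu_{7}$, and therefore $\ker((\nu')^{\ast})=4\mathbb{Z}$. Consequently $i^{\ast}$ is injective with image $4\mathbb{Z}\subseteq\mathbb{Z}$; in particular $[A,\Omega^{4}S^{7}]\cong\mathbb{Z}$, and with respect to suitable choices of generators $i^{\ast}$ is multiplication by $4$.

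I do not anticipate any serious obstacle: the argument is a routine variant of the preceding lemmas, and all the homotopy-group input comes straight from Lemma~\ref{Toda}. The one point that needs to be gotten right --- and the source of the $4$ in the statement --- is the identification of $(\nu')^{\ast}$ with multiplication by $2$ on $\mathbb{Z}/8\mathbb{Z}$, i.e.\ the fact that $\Sigma^{4}\nu'$ has order $4$ in $\pi_{10}(S^{7})$ rather than generating it.
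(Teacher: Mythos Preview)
Your proof is correct and follows essentially the same route as the paper: apply $[-,\Omega^{4}S^{7}]$ to the cofibration~(\ref{Acofib}), identify the outer terms via Lemma~\ref{Toda}, and compute that $(\nu')^{\ast}(\iota_{7})=\Sigma^{4}\nu'\simeq 2\nu_{7}$ so that $\ker(\nu')^{\ast}=4\mathbb{Z}$. There is no meaningful difference between your argument and the paper's.
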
 

\begin{proof} 
The homotopy cofibration~(\ref{Acofib}) induces an exact sequence of groups 
\[\namedddright{[S^{7},\Omega^{4} S^{7}]}{q^{\ast}}{[A,\Omega^{4} S^{7}]}{i^{\ast}} 
       {[S^{3},\Omega^{4} S^{7}]}{(\nu')^{\ast}}{[S^{6},\Omega^{4} S^{7}]}.\] 
Identifying the first, third and fourth terms using Lemma~\ref{Toda} we obtain 
an exact sequence 
\[\namedddright{0}{q^{\ast}}{[A,\Omega^{4} S^{7}]}{i^{\ast}}{\mathbb{Z}} 
         {(\nu')^{\ast}}{\mathbb{Z}/8\mathbb{Z}}.\] 
Thus $[A,\Omega^{4} S^{7}]$ is isomorphic to the kernel of $(\nu')^{\ast}$. 
To determine this kernel, observe that the generator of 
$[S^{3},\Omega^{4} S^{7}]\cong\pi_{7}(S^{7})\cong\mathbb{Z}$ is $\iota_{7}$ 
while the generator of $[S^{6},\Omega^{4} S^{7}]\cong\mathbb{Z}/8\mathbb{Z}$ 
is $\nu_{7}$. By Lemma~\ref{Toda}, $2\nu_{7}\simeq\Sigma^{4}\nu'$ so we obtain 
$(\Sigma\nu')^{\ast}(\iota_{7})=\iota_{7}\circ\Sigma^{4}\nu'\simeq 
        \Sigma^{4}\nu'\simeq 2\nu_{7}$. 
Therefore the kernel of~$(\nu')^{\ast}$ is isomorphic to $\mathbb{Z}$ and 
is generated by $4\iota_{7}$. 
\end{proof} 

\begin{lemma} 
   \label{Seta3} 
   The map 
   \(\namedright{S^{3}}{j}{Sp(2)}\) 
   induces an isomorphism 
   \(\namedright{[A,\Omega^{2} S^{3}]}{j_{\ast}}{[A,\Omega^{2} Sp(2)]}\). 
\end{lemma}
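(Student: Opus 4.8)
\textbf{Proof proposal for Lemma~\ref{Seta3}.}

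The plan is to exploit the homotopy fibration~(\ref{Sp2fib}), namely
\(\namedddright{\Omega S^{7}}{\delta}{S^{3}}{j}{Sp(2)}{}{S^{7}}\),
after looping it twice, and apply the functor $[A,-]$. Looping twice gives the fibration
\(\nameddright{\Omega^{3} S^{7}}{\Omega^{2}\delta}{\Omega^{2} S^{3}}{\Omega^{2} j}{\Omega^{2} Sp(2)}\),
and since $[A,-]$ takes fibrations to exact sequences of pointed sets (groups, in this range of dimensions), we obtain an exact sequence
\[\namedddright{[A,\Omega^{3} S^{7}]}{(\Omega^{2}\delta)_{\ast}}{[A,\Omega^{2} S^{3}]}{j_{\ast}}{[A,\Omega^{2} Sp(2)]}{}{[A,\Omega^{2} S^{7}]}.\]
So the job reduces to two things: showing that $[A,\Omega^{2} S^{7}]$ vanishes (or at least that the image of $j_{\ast}$ is everything), and showing that $(\Omega^{2}\delta)_{\ast}$ is trivial, i.e.\ that the boundary map $[A,\Omega^{3}S^{7}]\to[A,\Omega^{2}S^{3}]$ is zero.

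For the first point, $[A,\Omega^{2} S^{7}]$ is computed from the cofibration~(\ref{Acofib}) exactly as in Lemmas~\ref{eta1}, \ref{eta2}, \ref{Seta1}: the cofibration $\nameddright{S^{6}}{\nu'}{S^{3}}{i}{A}$ with pinch $q\colon A\to S^{7}$ gives an exact sequence whose outer terms are $[S^{7},\Omega^{2}S^{7}]\cong\pi_{9}(S^{7})$, $[S^{3},\Omega^{2}S^{7}]\cong\pi_{5}(S^{7})$ and $[S^{6},\Omega^{2}S^{7}]\cong\pi_{8}(S^{7})$. Since $\pi_{5}(S^{7})=0$ (the space $S^{7}$ is $6$-connected), already $[S^{3},\Omega^{2}S^{7}]=0$, and $\pi_{9}(S^{7})\cong\mathbb{Z}/2\mathbb{Z}$ (it is $\eta_7^2$), but this only contributes via $q^{\ast}$. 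In fact the cleanest route is: $\Omega^{2}S^{7}$ is $4$-connected, so $[A,\Omega^{2}S^{7}]$ depends only on the $5$-type of $A$; since $A$ has cells in dimensions $3$ and $7$, and $\Omega^{2}S^{7}$ is $4$-connected, one still needs the $S^{7}$-cell contribution, so I would instead run the cofiber exact sequence honestly and observe $[A,\Omega^{2}S^{7}]$ sits in $0\to\operatorname{coker}(q^{\ast})\to 0$, forcing it to be $0$. (If instead it turns out to be $\mathbb{Z}/2$, one argues that the composite $[A,\Omega^{2}Sp(2)]\to[A,\Omega^{2}S^{7}]$ is already zero because $\Omega^{2}j$ has a section-after-$\pi_*$ obstruction coming from $\delta$; but I expect $[A,\Omega^{2}S^{7}]=0$ outright.)

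For the second point — and this is where the real work is — I must show the boundary map $[A,\Omega^{3}S^{7}]\to[A,\Omega^{2}S^{3}]$ is trivial. Note $[A,\Omega^{3}S^{7}]$ is computed, again via~(\ref{Acofib}), from $\pi_{10}(S^{7})\cong\mathbb{Z}/8\mathbb{Z}$, $\pi_{6}(S^{7})=0$ and $\pi_{9}(S^{7})\cong\mathbb{Z}/2\mathbb{Z}$; and $[A,\Omega^{2}S^{3}]\cong\mathbb{Z}/2\mathbb{Z}$ by Lemma~\ref{Seta1}. The boundary map is (up to looping) precomposition with the fibration connecting map $\Omega^{2}\delta$, whose adjoint is controlled by the attaching map of the top cell of $Sp(2)$ inside the fibre $S^{3}$: the connecting map $\delta\colon\Omega S^{7}\to S^{3}$ is, on bottom cells, the Samelson/Whitehead data of $Sp(2)$, and by James' result quoted before Lemma~\ref{Sp2cofib}, together with Lemma~\ref{Toda}, the relevant composite lands in a $2$-torsion group where everything of the form $\eta_{3}^{2}\circ(\text{something of order dividing }8\text{ in the }3\text{-stem})$ must vanish for the same reason $\eta_3^2\circ\Sigma^2\nu'$ is null. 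Concretely: the image of the boundary on the generator of $[A,\Omega^{3}S^{7}]$ restricts, via $i^{\ast}$, to an element of $[S^{3},\Omega^{2}S^{3}]=\pi_5(S^3)\cong\mathbb{Z}/2\mathbb{Z}$ generated by $\eta_3^2$, and by the compatibility of all these exact sequences with the $3$-cell inclusion $i$ (as used repeatedly in Lemmas~\ref{eta1}–\ref{Seta1}), $i^{\ast}$ is an isomorphism on $[A,\Omega^{2}S^{3}]$, so it suffices to see the boundary is zero after restriction to $S^3$; but the $S^3$-restriction of the boundary map is the honest fibration connecting homomorphism $\pi_*(\Omega S^7)\to\pi_*(S^3)$ for~(\ref{Sp2fib}) in degrees where Lemma~\ref{MT} says $j$ is an isomorphism on homotopy (degrees $4$ and $5$: $\pi_4(Sp(2))\cong\pi_4(S^3)$, $\pi_5(Sp(2))\cong\pi_5(S^3)$ via $\eta_4$), hence the connecting map is zero there, hence the boundary vanishes. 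Therefore $j_{\ast}$ is surjective, and combined with injectivity from $[A,\Omega^{2}S^{7}]=0$ (or the vanishing of the first boundary), $j_{\ast}$ is an isomorphism.

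The main obstacle I anticipate is pinning down that the fibration boundary $[A,\Omega^{3}S^{7}]\to[A,\Omega^{2}S^{3}]$ is genuinely zero and not merely zero on the $\mathbb{Z}/2$ subgroup; the generator of $[A,\Omega^{3}S^{7}]$ may not restrict to a generator of $[A,\Omega^{2}S^{3}]$ under the sequences in play, so I would be careful to track the $S^7$-cell contribution $\pi_{10}(S^7)=\mathbb{Z}/8$ as well, and argue that its image in $\pi_5(S^3)$ under the composite boundary vanishes by the stable fact that any class factoring as $\eta^2\circ\beta$ with $\beta$ in the image of a $3$-stem-to-$7$-stem connecting map must be null (mirroring the $\eta_3^2\circ\Sigma^2\nu'\simeq 0$ input of Lemma~\ref{Toda}). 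Once that vanishing is secured the rest is bookkeeping on the five-term exact sequences that the paper has already set up.
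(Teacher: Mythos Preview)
Your approach via the long exact sequence of the fibration $\Omega^{3}S^{7}\to\Omega^{2}S^{3}\to\Omega^{2}Sp(2)\to\Omega^{2}S^{7}$ is different from the paper's, and it contains a concrete error that needs fixing.

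The mistake is the claim that $[A,\Omega^{2}S^{7}]=0$. Running the cofibre sequence~(\ref{Acofib}) honestly gives
\[\namedddright{[S^{4},\Omega^{2}S^{7}]}{(\Sigma\nu')^{\ast}}{[S^{7},\Omega^{2}S^{7}]}{q^{\ast}}{[A,\Omega^{2}S^{7}]}{i^{\ast}}{[S^{3},\Omega^{2}S^{7}]}.\]
Here $[S^{4},\Omega^{2}S^{7}]=\pi_{6}(S^{7})=0$ and $[S^{3},\Omega^{2}S^{7}]=\pi_{5}(S^{7})=0$, so $q^{\ast}$ is an isomorphism and $[A,\Omega^{2}S^{7}]\cong\pi_{9}(S^{7})\cong\zmodtwo$, not $0$. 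Your ``$0\to\operatorname{coker}(q^{\ast})\to 0$'' line is a muddle: $q^{\ast}$ is surjective, not something whose cokernel matters. So surjectivity of $j_{\ast}$ does not come for free, and your fallback (``section-after-$\pi_{\ast}$ obstruction'') is too vague to count as a proof.

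Your injectivity argument, by contrast, is correct once cleaned up. The naturality square with respect to $i^{\ast}$ has $[S^{3},\Omega^{3}S^{7}]=\pi_{6}(S^{7})=0$ at the bottom left and an isomorphism $i^{\ast}$ on the right (Lemma~\ref{Seta1}); commutativity forces the top boundary $[A,\Omega^{3}S^{7}]\to[A,\Omega^{2}S^{3}]$ to vanish. That gives $j_{\ast}$ injective. Now simply observe that both $[A,\Omega^{2}S^{3}]$ and $[A,\Omega^{2}Sp(2)]$ are $\zmodtwo$ (Lemmas~\ref{Seta1} and~\ref{eta2}), so an injection between them is an isomorphism. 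This repairs your argument without ever touching $[A,\Omega^{2}S^{7}]$.

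For comparison, the paper's proof is a one-line three-out-of-four argument: the maps $i$ and $j$ induce a commutative square with corners $[A,\Omega^{2}S^{3}]$, $[A,\Omega^{2}Sp(2)]$, $[S^{3},\Omega^{2}S^{3}]$, $[S^{3},\Omega^{2}Sp(2)]$; the two $i^{\ast}$ maps are isomorphisms by Lemmas~\ref{Seta1} and~\ref{eta2}, and the bottom $j_{\ast}$ is the isomorphism $\pi_{5}(S^{3})\to\pi_{5}(Sp(2))$ from~\cite{MT}, so the top $j_{\ast}$ is an isomorphism too. This avoids the fibration sequence entirely and is considerably shorter.
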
 

\begin{proof} 
The maps 
\(\namedright{S^{3}}{i}{A}\) 
and 
\(\namedright{S^{3}}{j}{Sp(2)}\) 
induce a commutative diagram 
\begin{equation} 
  \label{Seta3dgrm} 
  \diagram 
      [A,\Omega^{2} S^{3}]\rto^-{a}\dto^{b} & [A,\Omega^{2} Sp(2)]\dto^{d} \\ 
      [S^{3},\Omega^{2} S^{3}]\rto^-{c} & [S^{3},\Omega^{2} Sp(2)]. 
  \enddiagram 
\end{equation} 
By Lemma~\ref{Seta1}, the map $b$ is an isomorphism. By~\cite{MT}, $j$ 
induces an isomorphism 
\(\namedright{\pi_{5}(S^{3})}{}{\pi_{5}(Sp(2))}\), 
so the map $c$ is an isomorphism. By Lemma~\ref{eta2}, $d$ is an 
isomorphism. Hence in~(\ref{Seta3dgrm}) the maps $b$, $c$ and $d$ are 
isomorphisms, implying that $a$ is as well. 
\end{proof} 

\begin{lemma} 
   \label{Seta4} 
   The fibration connecting map 
   \(\namedright{\Omega S^{7}}{\delta}{S^{3}}\) 
   induces the zero map on the homotopy sets  
   \(\namedright{[A,\Omega^{4} S^{7}]}{\delta_{\ast}}{[A,\Omega^{3} S^{3}]}\). 
\end{lemma}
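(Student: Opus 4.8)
The plan is to exploit the homotopy fibration sequence~(\ref{Sp2fib}), namely \(\nameddright{\Omega S^{7}}{\delta}{S^{3}}{j}{Sp(2)}\), together with the identifications already established. First I would apply the functor \([A,\Omega^{4}(\ \ )]\) to the fibration \(\nameddright{\Omega S^{7}}{\delta}{S^{3}}{j}{Sp(2)}\); the relevant portion of the induced exact sequence is \(\namedright{[A,\Omega^{4} S^{7}]}{\delta_{\ast}}{[A,\Omega^{3} S^{3}]}\stackrel{j_{\ast}}{\longrightarrow}[A,\Omega^{3} Sp(2)]\), where I write \(\Omega^{4} S^{7}\) in place of \(\Omega^{3}\Omega S^{7}\) and \(\Omega^{3} S^{3}\) in place of \(\Omega^{3} S^{3}\). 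Showing \(\delta_{\ast}=0\) is therefore equivalent, by exactness, to showing that \(j_{\ast}\colon\namedright{[A,\Omega^{3} S^{3}]}{}{[A,\Omega^{3} Sp(2)]}\) is \emph{injective}.

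To prove that injectivity I would set up a comparison square between the cofibration \(\namedright{S^{3}}{i}{A}\) and the fibration map \(\namedright{S^{3}}{j}{Sp(2)}\), exactly in the style of Lemma~\ref{Seta3}: the square
\[\diagram
      [A,\Omega^{3} S^{3}]\rto^-{j_{\ast}}\dto^{i^{\ast}} & [A,\Omega^{3} Sp(2)]\dto^{i^{\ast}} \\
      [S^{3},\Omega^{3} S^{3}]\rto^-{j_{\ast}} & [S^{3},\Omega^{3} Sp(2)].
  \enddiagram\]
The left-hand vertical map \(i^{\ast}\colon\namedright{[A,\Omega^{3} S^{3}]}{}{[S^{3},\Omega^{3} S^{3}]\cong\pi_{6}(S^{3})\cong\mathbb{Z}/4\mathbb{Z}}\) I can analyze via the cofibration~(\ref{Acofib}) as in the earlier lemmas, using \(\pi_{9}(S^{3})=0\) (Lemma~\ref{Toda}) to get \([S^{7},\Omega^{3} S^{3}]=0\), so that \(i^{\ast}\) is injective here. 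The bottom map is \(j_{\ast}\colon\namedright{\pi_{6}(S^{3})}{}{\pi_{6}(Sp(2))}\); by~(\ref{Sp2fib}) its kernel is the image of \(\delta_{\ast}\colon\pi_{6}(\Omega S^{7})\cong\pi_{7}(S^{7})\to\pi_{6}(S^{3})\), and Lemma~\ref{Toda}'s relation \(2\nu_{7}\simeq\Sigma^{4}\nu'\) together with the identification of the attaching map \(f_{1}=\nu'\) of \(A\) pins down this image: the connecting homomorphism \(\pi_{7}(S^{7})\to\pi_{6}(S^{3})\) sends \(\iota_{7}\) to (a unit multiple of) \(2\nu'\), so the bottom \(j_{\ast}\) has kernel \(\mathbb{Z}/2\mathbb{Z}\) and hence is \emph{not} injective — which means I cannot conclude injectivity of the top \(j_{\ast}\) from the bottom one alone.

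The fix, and the step I expect to be the main obstacle, is to use the \emph{right}-hand vertical map instead. By Lemma~\ref{CHMlemma}, \([A,\Omega^{3} Sp(2)]\cong\mathbb{Z}/8\mathbb{Z}\) with generator \(\partial_{1}\circ s\), and I would show that \(i^{\ast}\colon\namedright{[A,\Omega^{3} Sp(2)]}{}{[S^{3},\Omega^{3} Sp(2)]\cong\pi_{6}(Sp(2))=0}\) — so the right vertical is the zero map into a trivial group, which gives no information — forcing me instead to argue directly on \([A,\Omega^{3} S^{3}]\). Here I would compute \([A,\Omega^{3} S^{3}]\) outright from~(\ref{Acofib}): \(\pi_{9}(S^{3})=0\) gives \([S^{7},\Omega^{3}S^{3}]=0\) and \(\pi_{6}(S^{3})\cong\mathbb{Z}/4\mathbb{Z}\), \(\pi_{9}(S^{3})=0\) again controls the \((\nu')^{\ast}\) term, so \([A,\Omega^{3}S^{3}]\) is a subgroup of \(\mathbb{Z}/4\mathbb{Z}\); then I track the composite \(\namedright{A}{i}{A}\)-generator \(\nu'\in\pi_{6}(S^{3})\) under \(j_{\ast}\), and since \(j\circ\nu'\simeq\ast\) is precisely the attaching-map relation for \(A\) (indeed \(j\) kills \(\nu'=f_{1}\) by the cofibration \(\nameddright{S^{6}}{\nu'}{S^{3}}{i}{A}\)), the element \(\nu'\) does \emph{not} survive to \([A,\Omega^{3}S^{3}]\) except through classes that \(j_{\ast}\) treats compatibly — more carefully, I would observe that the bottom \(j_{\ast}\)'s kernel \(2\nu'\) does \emph{not} lie in the image of \(i^{\ast}\) from \([A,\Omega^{3}S^{3}]\), because \(i^{\ast}\) on \([A,\Omega^{3}S^{3}]\) lands in the subgroup generated by classes compatible with the \(A\)-structure, and a diagram chase then yields that the top \(j_{\ast}\) is injective after all. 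Consequently \(\delta_{\ast}=0\) by exactness of \(\namedright{[A,\Omega^{4} S^{7}]}{\delta_{\ast}}{[A,\Omega^{3} S^{3}]}\stackrel{j_{\ast}}{\longrightarrow}[A,\Omega^{3} Sp(2)]\). The delicate point throughout is keeping straight which \(i^{\ast}\) is injective and which is not, and correctly identifying the image of the fibration connecting map on \(\pi_{6}(S^{3})\) via \(2\nu_{7}\simeq\Sigma^{4}\nu'\).
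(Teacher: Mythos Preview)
Your plan to deduce $\delta_{\ast}=0$ from injectivity of $j_{\ast}\colon [A,\Omega^{3}S^{3}]\to[A,\Omega^{3}Sp(2)]$ is logically sound---it is exactly the content of Corollary~\ref{Seta5}, read backwards---but your argument for that injectivity does not go through.

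The central error is the identification of the connecting homomorphism $\pi_{7}(S^{7})\to\pi_{6}(S^{3})$. You assert that $\iota_{7}$ maps to a unit multiple of $2\nu'$, appealing to $2\nu_{7}\simeq\Sigma^{4}\nu'$; but that relation lives in $\pi_{10}(S^{7})$ and has nothing to do with the boundary map of the fibration $S^{3}\to Sp(2)\to S^{7}$. In fact, since $\pi_{6}(Sp(2))=0$ (Lemma~\ref{MT}) and $\pi_{6}(S^{7})=0$, the long exact sequence forces $\pi_{7}(S^{7})\to\pi_{6}(S^{3})$ to be \emph{surjective}, so $\delta_{\ast}(\iota_{7})=\pm\nu'$, a generator. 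Consequently the bottom $j_{\ast}\colon\pi_{6}(S^{3})\to\pi_{6}(Sp(2))$ is the zero map. Meanwhile your own computation (with $\pi_{9}(S^{3})=\pi_{10}(S^{3})=0$) shows the left vertical $i^{\ast}\colon[A,\Omega^{3}S^{3}]\to\pi_{6}(S^{3})$ is an \emph{isomorphism}. The square therefore collapses to $\mathbb{Z}/4\to\mathbb{Z}/8$ over $\mathbb{Z}/4\to 0$ and carries no information about the top $j_{\ast}$. Your subsequent ``fix''---that $2\nu'$ lies outside the image of $i^{\ast}$---is then also false, since $i^{\ast}$ is onto.

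The paper avoids this impasse entirely by working on the \emph{source} $[A,\Omega^{4}S^{7}]$ rather than the target. Lemma~\ref{Seta2} gives $[A,\Omega^{4}S^{7}]\cong\mathbb{Z}$ with $i^{\ast}$ equal to multiplication by~$4$; hence a generator $\epsilon$ satisfies $\epsilon\circ i\simeq 4\circ E^{4}$, so $\Omega^{3}\delta\circ\epsilon\circ i$ is $4$ times an element of $\pi_{6}(S^{3})\cong\mathbb{Z}/4\mathbb{Z}$ and is therefore null. Then $\Omega^{3}\delta\circ\epsilon$ factors through the cofibre $S^{7}$, and $\pi_{10}(S^{3})=0$ finishes the argument. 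The ingredient you are missing is precisely this ``multiplication by $4$'' coming from Lemma~\ref{Seta2}; without it there is no evident way to kill $\delta_{\ast}$ on the generator.
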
 

\begin{proof} 
By Lemma~\ref{Seta2}, $[A,\Omega^{4} S^{7}]\cong\mathbb{Z}$ and  
\(\namedright{\mathbb{Z}\cong [A,\Omega^{4} S^{7}]}{i^{\ast}} 
         {[S^{3},\Omega^{4} S^{7}]\cong\mathbb{Z}}\) 
is multiplication by $4$. Therefore, if $\epsilon$ is a generator 
of $[A,\Omega^{4} S^{7}]$, then there is a homotopy commutative diagram 
\begin{equation} 
  \label{relate2dgrm} 
  \diagram 
        S^{3}\rto^-{E^{4}}\dto^{i} & \Omega^{4} S^{7}\rto^-{4} & \Omega^{4} S^{7} \\ 
        A\urrto_{\epsilon} & & 
  \enddiagram 
\end{equation}  
where $E^{4}$ is the four-fold suspension. Consider $\delta_{\ast}(\epsilon)$, 
which is the composite 
\(\nameddright{A}{\epsilon}{\Omega^{4} S^{7}}{\Omega^{3}\delta}{\Omega^{3} S^{3}}\). 
By~(\ref{relate2dgrm}), $\Omega^{3}\delta\circ\epsilon\circ i$ is homotopic to 
the composite 
\(\namedddright{S^{3}}{E^{4}}{\Omega^{4} S^{7}}{4}{\Omega^{4} S^{7}} 
        {\Omega^{3}\delta}{\Omega^{3} S^{3}}\). 
But the latter composite is null homotopic since 
$\pi_{6}(S^{3})\cong\mathbb{Z}/4\mathbb{Z}$. Thus 
$\Omega^{3}\delta\circ\epsilon$ extends through the cofibre of $i$ to a map 
\(\namedright{S^{7}}{}{\Omega^{3} S^{3}}\). 
But by Lemma~\ref{Toda}, $\pi_{10}(S^{3})\cong 0$. Thus 
$\Omega^{3}\delta\circ\epsilon$ is null homotopic. That is, 
$\delta_{\ast}(\epsilon)=0$. As $\epsilon$ generates $[A,\Omega^{4} S^{7}]$, 
this implies that $\delta_{\ast}$ is the zero map. 
\end{proof} 
   
\begin{corollary} 
   \label{Seta5} 
   The map 
   \(\namedright{S^{3}}{j}{Sp(2)}\) 
   induces a monomorphism 
   \(\namedright{[A,\Omega^{3} S^{3}]}{j_{\ast}}{[A,\Omega^{3} Sp(2)]}\). 
\end{corollary}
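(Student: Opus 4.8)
The plan is to exploit the homotopy fibration sequence~(\ref{Sp2fib}), namely \(\namedddright{\Omega S^{7}}{\delta}{S^{3}}{j}{Sp(2)}{}{S^{7}}\), by applying the functor \([A,\ \ ]\) to it. This yields an exact sequence of pointed sets (in fact of groups in the relevant range, since we may loop once more)
\[\namedddright{[A,\Omega S^{7}]}{\delta_{\ast}}{[A,S^{3}]}{j_{\ast}}{[A,Sp(2)]}{}{[A,S^{7}]}.\]
However, to land in \(\Omega^{3}S^{3}\) and \(\Omega^{3}Sp(2)\) as in the statement, I would instead apply \([A,\ \ ]\) to the three-fold loops of~(\ref{Sp2fib}), i.e.\ to the fibration sequence \(\namedddright{\Omega^{4} S^{7}}{\Omega^{3}\delta}{\Omega^{3} S^{3}}{\Omega^{3} j}{\Omega^{3} Sp(2)}{}{\Omega^{3} S^{7}}\), obtaining the exact sequence
\[\namedddright{[A,\Omega^{4} S^{7}]}{\delta_{\ast}}{[A,\Omega^{3} S^{3}]}{j_{\ast}}{[A,\Omega^{3} Sp(2)]}{}{[A,\Omega^{3} S^{7}]}.\]
Exactness at \([A,\Omega^{3} S^{3}]\) tells us that the kernel of \(j_{\ast}\) is exactly the image of \(\delta_{\ast}\).

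The key step is then immediate from Lemma~\ref{Seta4}: the map \(\delta_{\ast}\colon [A,\Omega^{4} S^{7}]\to [A,\Omega^{3} S^{3}]\) is the zero map. Hence the image of \(\delta_{\ast}\) is trivial, so by exactness the kernel of \(j_{\ast}\) is trivial, i.e.\ \(j_{\ast}\colon [A,\Omega^{3} S^{3}]\to [A,\Omega^{3} Sp(2)]\) is a monomorphism. One small point to be careful about is that \([A,\Omega^{3} S^{3}]\) and \([A,\Omega^{3} Sp(2)]\) are genuine groups (since \(\Omega^{3}S^{3}\) and \(\Omega^{3}Sp(2)\) are at least double loop spaces, being triple loop spaces), so ``kernel'' makes sense and exactness is exactness of groups; the statement ``\(j_{\ast}\) is a monomorphism'' is then literally ``the kernel of \(j_{\ast}\) is trivial''. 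Strictly one should also note that the sequence displayed is the portion of the long exact sequence induced by the fibration \(\Omega^{3}S^{7}\to\cdots\), or equivalently that applying \([A,\ \ ]\) to a fibration \(F\to E\to B\) gives exactness \([A,F]\to[A,E]\to[A,B]\); this is standard.

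There is essentially no obstacle here — the work has all been done in Lemma~\ref{Seta4}, and the corollary is a one-line consequence of exactness. The only thing worth double-checking is the bookkeeping of which fibration sequence one applies \([A,\ \ ]\) to, to make sure the maps \(\delta_{\ast}\) and \(j_{\ast}\) appearing in the statement match those in the exact sequence; loosely, one takes the fibration \((\ref{Sp2fib})\), loops it three times, and reads off the relevant three-term portion. I would therefore present the proof in just two sentences: apply \([A,\ \ ]\) to the (triply looped) fibration sequence~(\ref{Sp2fib}) to get exactness of \([A,\Omega^{4} S^{7}]\xrightarrow{\delta_{\ast}}[A,\Omega^{3} S^{3}]\xrightarrow{j_{\ast}}[A,\Omega^{3} Sp(2)]\), and invoke Lemma~\ref{Seta4} to conclude that \(\mathrm{im}(\delta_{\ast})=0\), hence \(j_{\ast}\) is injective.
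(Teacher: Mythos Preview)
Your proof is correct and follows exactly the paper's approach: apply $[A,\ \ ]$ to the (triply looped) fibration sequence~(\ref{Sp2fib}) to obtain the exact sequence $[A,\Omega^{4}S^{7}]\xrightarrow{\delta_{\ast}}[A,\Omega^{3}S^{3}]\xrightarrow{j_{\ast}}[A,\Omega^{3}Sp(2)]$, then invoke Lemma~\ref{Seta4} to conclude $\delta_{\ast}=0$ and hence $j_{\ast}$ is a monomorphism. The paper's proof is essentially your proposed two-sentence version.
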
 

\begin{proof} 
The homotopy fibration~(\ref{Sp2fib}) induces an exact sequence 
\(\nameddright{[A,\Omega^{4} S^{7}]}{\delta_{\ast}}{[A,\Omega^{3} S^{3}]} 
       {j_{\ast}}{[A,\Omega^{3} Sp(2)]}\). 
By Lemma~\ref{Seta4}, $\delta_{\ast}$ is the zero map. Hence $j_{\ast}$ 
is a monomorphism. 
\end{proof} 

\begin{lemma} 
   \label{Seta6} 
   The map 
   \(\namedright{\Omega^{2} S^{3}}{(\Sigma\eta)^{\ast}}{\Omega^{3} S^{3}}\) 
   induces an injection 
   \(\namedright{[A,\Omega^{2} S^{3}]}{(\Sigma\eta)^{\ast}}{[A,\Omega^{3} S^{3}]}\). 
\end{lemma}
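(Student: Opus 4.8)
The plan is to pull the statement back along the skeletal inclusion
\(i\colon\namedright{S^{3}}{}{A}\)
to a computation with homotopy groups of $S^{3}$, where the answer is governed by the
relation $\eta_{3}^{3}=2\nu'$. First I would observe that $i$ induces an isomorphism
\(\namedright{[A,\Omega^{3} S^{3}]}{i^{\ast}}{[S^{3},\Omega^{3} S^{3}]}\).
Indeed, the homotopy cofibration~(\ref{Acofib}) induces an exact sequence
\[\namedddright{[S^{7},\Omega^{3} S^{3}]}{q^{\ast}}{[A,\Omega^{3} S^{3}]}{i^{\ast}}
     {[S^{3},\Omega^{3} S^{3}]}{(\nu')^{\ast}}{[S^{6},\Omega^{3} S^{3}]},\]
whose outer terms are $\pi_{10}(S^{3})\cong 0$ and $\pi_{9}(S^{3})\cong 0$ by
Lemma~\ref{Toda}, so $i^{\ast}$ is an isomorphism. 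Together with Lemma~\ref{Seta1}, which
gives the isomorphism
\(\namedright{[A,\Omega^{2} S^{3}]}{i^{\ast}}{[S^{3},\Omega^{2} S^{3}]\cong\mathbb{Z}/2\mathbb{Z}}\),
and the fact that $i^{\ast}$ commutes with $(\Sigma\eta)^{\ast}$, this reduces the lemma
to showing that
\(\namedright{[S^{3},\Omega^{2} S^{3}]}{(\Sigma\eta)^{\ast}}{[S^{3},\Omega^{3} S^{3}]}\)
is an injection.

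For this step I would pass to adjoints. By Lemma~\ref{Toda},
$[S^{3},\Omega^{2} S^{3}]\cong\pi_{5}(S^{3})\cong\mathbb{Z}/2\mathbb{Z}$ is generated by
$\eta_{3}^{2}$, and $[S^{3},\Omega^{3} S^{3}]\cong\pi_{6}(S^{3})\cong\mathbb{Z}/4\mathbb{Z}$
is generated by $\nu'$. Since $(\Sigma\eta)^{\ast}$ is, by construction, induced by
precomposition with $\Sigma\eta$, unwinding the adjunction (and using the relation
$\eta_{m}=\Sigma\eta_{m-1}$) identifies it on these groups with the homomorphism
\(\namedright{\pi_{5}(S^{3})}{(\eta_{5})^{\ast}}{\pi_{6}(S^{3})}\)
given by precomposition with $\eta_{5}$. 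Hence the generator $\eta_{3}^{2}$ is sent to
$\eta_{3}^{2}\eta_{5}=\eta_{3}^{3}$, and by Toda~\cite{To} we have $\eta_{3}^{3}=2\nu'$,
which is nonzero in $\mathbb{Z}/4\mathbb{Z}$ (this is the same $2$-primary computation
that appears in the proof of Lemma~\ref{pi3}). Thus $(\Sigma\eta)^{\ast}$ is injective on
$[S^{3},\Omega^{2} S^{3}]$, and by the first step it is injective on $[A,\Omega^{2} S^{3}]$.

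The one point requiring care, and it is purely bookkeeping, is keeping the suspension
indices straight in the adjunction so that $(\Sigma\eta)^{\ast}$ is matched with
$(\eta_{5})^{\ast}$ — equivalently, so that the generator lands on $\eta_{3}^{3}$ rather
than on an element of smaller order in $\pi_{6}(S^{3})$. Once that is settled, the
nonvanishing of $\eta_{3}^{3}=2\nu'$ finishes the proof.
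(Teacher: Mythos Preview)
Your proof is correct and follows essentially the same approach as the paper: both reduce to the nonvanishing of $\eta_{3}^{3}=2\nu'$ in $\pi_{6}(S^{3})$ by restricting along $i$. The only difference is organizational: you first establish the auxiliary isomorphism $i^{\ast}\colon [A,\Omega^{3}S^{3}]\to[S^{3},\Omega^{3}S^{3}]$ (using $\pi_{9}(S^{3})=\pi_{10}(S^{3})=0$) so as to reduce entirely to spheres, whereas the paper simply observes that if $(\Sigma\eta)^{\ast}\circ\epsilon$ becomes nontrivial after precomposing with $i$ then it was already nontrivial, bypassing the need for that isomorphism on the target.
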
 

\begin{proof} 
By Lemma~\ref{Seta3}, $[A,\Omega^{2} S^{3}]\cong\mathbb{Z}/2\mathbb{Z}$ 
and there is an isomorphism 
\(\namedright{[A,\Omega^{2} S^{3}]}{i^{\ast}}{[S^{3},\Omega^{2} S^{3}]}\). 
Therefore, if $\epsilon$ is a generator of $[A,\Omega^{2} S^{3}]$, then 
there is a homotopy commutative diagram 
\begin{equation} 
  \label{relate3dgrm} 
  \diagram 
      S^{3}\rto^-{\overline{\eta}_{3}^{2}}\dto^{i} & \Omega^{2} S^{3} \\ 
      A\urto_{\epsilon} & 
  \enddiagram 
\end{equation} 
where $\overline{\eta}_{3}^{2}$ is the adjoint of $\eta_{3}^{2}$. Consider 
the composite 
\(\nameddright{A}{\epsilon}{\Omega^{2} S^{3}}{(\Sigma\eta)^{\ast}}{\Omega^{3} S^{3}}\). 
By~(\ref{relate3dgrm}), $(\Sigma\eta)^{\ast}\circ\epsilon\circ i$ is homotopic to 
the composite 
\(\nameddright{S^{3}}{\overline{\eta}_{3}^{2}}{\Omega^{2} S^{3}}{(\Sigma\eta)^{\ast}} 
       {\Omega^{3} S^{3}}\), 
which in turn is homotopic to the adjoint of $\eta_{3}^{3}$. Therefore, as $\eta_{3}^{3}$ 
is nontrivial so is $(\Sigma\eta)^{\ast}\circ\epsilon$. Hence  
\(\namedright{[A,\Omega^{2} S^{3}]}{}{[A,\Omega^{3} S^{3}]}\) 
is an injection because it is nonzero on the generator of 
$[A,\Omega^{2} S^{3}]\cong\mathbb{Z}/2\mathbb{Z}$.  
\end{proof} 

\begin{lemma} 
   \label{Seta7} 
   The map 
   \(\namedright{\Omega^{2} S^{3}}{(\Sigma\eta)^{\ast}}{\Omega^{3} S^{3}}\) 
   induces an injection 
   \(\namedright{[A,\Omega^{2} Sp(2)]}{(\Sigma\eta)^{\ast}}{[A,\Omega^{3} Sp(2)]}\). 
\end{lemma}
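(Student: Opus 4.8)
The plan is to reduce the statement about $[A,\Omega^{2}Sp(2)]\to[A,\Omega^{3}Sp(2)]$ to the already-established sphere-level statement in Lemma~\ref{Seta6} by a naturality argument using the inclusion $j\colon S^{3}\to Sp(2)$. Specifically, I would set up the commutative square
\[\diagram
   [A,\Omega^{2} S^{3}]\rto^-{(\Sigma\eta)^{\ast}}\dto^{j_{\ast}} & [A,\Omega^{3} S^{3}]\dto^{j_{\ast}} \\
   [A,\Omega^{2} Sp(2)]\rto^-{(\Sigma\eta)^{\ast}} & [A,\Omega^{3} Sp(2)]
\enddiagram\]
which commutes because the horizontal maps are induced by precomposition with $\Sigma\eta$ and the vertical maps by postcomposition with $\Omega^{t}j$, and these two operations commute. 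The left vertical map $j_{\ast}$ is an isomorphism by Lemma~\ref{Seta3}; the right vertical map $j_{\ast}$ is a monomorphism by Corollary~\ref{Seta5}; and the top horizontal map $(\Sigma\eta)^{\ast}$ is an injection by Lemma~\ref{Seta6}. A straightforward diagram chase then forces the bottom map $(\Sigma\eta)^{\ast}$ to be an injection: if $x\in[A,\Omega^{2}Sp(2)]$ maps to zero, write $x=j_{\ast}(y)$ with $y\in[A,\Omega^{2}S^{3}]$; then $j_{\ast}((\Sigma\eta)^{\ast}(y))=(\Sigma\eta)^{\ast}(j_{\ast}(y))=0$, so $(\Sigma\eta)^{\ast}(y)=0$ since the right $j_{\ast}$ is mono, whence $y=0$ since the top $(\Sigma\eta)^{\ast}$ is injective, and therefore $x=j_{\ast}(y)=0$.

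The only real point requiring care is checking that the square genuinely commutes — that is, that precomposition with a map of the source commutes with postcomposition with a map of the target, which is the standard fact that $(\Omega^{3}j)\circ(-)\circ(\Sigma\eta)$ is unambiguous as a map on homotopy classes. Everything else is an immediate invocation of the three preceding results. I do not anticipate a genuine obstacle here; this lemma is a bookkeeping consequence, packaging the sphere-level calculation of Lemma~\ref{Seta6} together with the comparison maps $j_{\ast}$ of Lemma~\ref{Seta3} and Corollary~\ref{Seta5} so that it can be fed into the computation of $[A,N]$ via the exact sequence~(\ref{Aexact}). One could alternatively argue directly from the cofibration~(\ref{Acofib}) and the group structure on $[A,\Omega^{2}Sp(2)]\cong[S^{3},\Omega^{2}Sp(2)]\cong\mathbb{Z}/2\mathbb{Z}$ by tracking the generator explicitly through adjoints, but the naturality square is cleaner and reuses exactly the results just proved.
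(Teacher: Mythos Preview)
Your proposal is correct and follows essentially the same approach as the paper: both set up the naturality square with $(\Sigma\eta)^{\ast}$ horizontally and $j_{\ast}$ vertically, then invoke Lemma~\ref{Seta3} (left vertical is an isomorphism), Corollary~\ref{Seta5} (right vertical is a monomorphism), and Lemma~\ref{Seta6} (top is an injection) to force the bottom map to be an injection. Your explicit diagram chase is exactly the argument the paper leaves implicit.
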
 

\begin{proof} 
The map 
\(\namedright{\Omega^{2} X}{(\Sigma\eta)^{\ast}}{\Omega^{3} X}\) 
is natural with respect to maps 
\(\namedright{X}{}{Y}\), 
so applying this to the case  
\(\namedright{BS^{3}}{Bj}{BSp(2)}\) 
and taking homotopy sets $[A,\hspace{2mm}]$ gives a commutative diagram 
\begin{equation} 
   \label{Seta7dgrm} 
   \diagram 
        [A,\Omega^{2} S^{3}]\rto^-{a}\dto^{b} 
            & [A,\Omega^{3} S^{3}]\dto^{d} \\ 
        [A,\Omega^{2} Sp(2)]\rto^-{c} & [A,\Omega^{3} Sp(2)]. 
   \enddiagram 
\end{equation} 
Here, $a$ and $c$ are induced by $(\Sigma\eta)^{\ast}$ while $b$ and $c$ 
are induced by $j_{\ast}$. By Lemma~\ref{Seta3}, $b$ is an isomorphism 
and, by Corollary~\ref{Seta5}, $d$ is an injection. By Lemma~\ref{Seta6}, 
$a$ is an injection. Therefore the commutativity of~(\ref{Seta7dgrm}) implies 
that $c$ is also an injection. 
\end{proof} 

Now we identify the injection in Lemma~\ref{Seta7}. 

\begin{proposition} 
   \label{Setastar} 
   The injection 
   \(\namedright{[A,\Omega^{2} Sp(2)]}{(\Sigma\eta)^{\ast}}{[A,\Omega^{3} Sp(2)]}\) 
   in Lemma~\ref{Seta7} is the injection 
   \(\namedright{\mathbb{Z}/2\mathbb{Z}}{}{\mathbb{Z}/8\mathbb{Z}}\). 
\end{proposition}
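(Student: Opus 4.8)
The plan is to deduce the statement directly from identifications of the source and target of $(\Sigma\eta)^{\ast}$ that are already available, together with the injectivity established in Lemma~\ref{Seta7}. For the target, Lemma~\ref{CHMlemma} gives $[A,\Omega^{3} Sp(2)]\cong\zmodeight$, with the composite $\partial_{1}\circ s$ representing a generator. For the source, chaining the isomorphism of Lemma~\ref{Seta3} with that of Lemma~\ref{Seta1} yields $[A,\Omega^{2} Sp(2)]\cong[A,\Omega^{2} S^{3}]\cong[S^{3},\Omega^{2} S^{3}]\cong\pi_{5}(S^{3})$, and the last group is $\zmodtwo$ by Lemma~\ref{Toda}. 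So the map in question is a homomorphism $\zmodtwo\to\zmodeight$ of abelian groups, and by Lemma~\ref{Seta7} it is a monomorphism. Since $\zmodeight$ has a unique subgroup of order $2$ — the one generated by four times a generator, that is, by $4(\partial_{1}\circ s)$ — the map $(\Sigma\eta)^{\ast}$ is forced to be the inclusion of $\zmodtwo$ onto this subgroup, which is precisely the assertion; in particular a generator of $[A,\Omega^{2} Sp(2)]$ is carried to $4(\partial_{1}\circ s)$.

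I do not expect a genuine obstacle, as the substantive work has been done in establishing Lemmas~\ref{CHMlemma}, \ref{Seta3} and~\ref{Seta7}. If one wanted a computation that pins down the image without quoting the injectivity of Lemma~\ref{Seta7}, I would trace a generator through naturality. Taking $\bar\epsilon\in[A,\Omega^{2} S^{3}]$ which restricts along \(\namedright{S^{3}}{i}{A}\) to the adjoint of $\eta_{3}^{2}$, precomposition with $i$ commutes with the loop-space map $(\Sigma\eta)^{\ast}$, and since $j$ induces an isomorphism \(\namedright{\pi_{5}(S^{3})}{\cong}{\pi_{5}(Sp(2))}\) (as used in the proof of Lemma~\ref{Seta3}), the restriction $i^{\ast}(\Sigma\eta)^{\ast} j_{\ast}\bar\epsilon$ is adjoint to $j\circ\eta_{3}^{3}$. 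By Lemma~\ref{Toda}, $\eta_{3}^{3}=2\nu'$, so this equals $2(j\circ\nu')$, which is zero because $\pi_{6}(Sp(2))=0$ by Lemma~\ref{MT}. Hence the image of the generator of $[A,\Omega^{2} Sp(2)]$ lies in the image of the pinch-induced map \(\namedright{[S^{7},\Omega^{3} Sp(2)]}{q^{\ast}}{[A,\Omega^{3} Sp(2)]}\).

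The one remaining point in that alternative route, namely that this image is nonzero rather than $0$, is exactly what Lemma~\ref{Seta7} supplies; so the efficient path is the one in the first paragraph, and either way the conclusion is that $(\Sigma\eta)^{\ast}$ is the inclusion $\zmodtwo\hookrightarrow\zmodeight$.
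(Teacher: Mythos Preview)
Your proof is correct and follows essentially the same approach as the paper: identify the source as $\zmodtwo$ via Lemmas~\ref{Seta1} and~\ref{Seta3}, the target as $\zmodeight$ via Lemma~\ref{CHMlemma}, and then invoke the injectivity of Lemma~\ref{Seta7}. The paper's proof is just your first paragraph without the extra remark about the unique order-$2$ subgroup, and it does not pursue the alternative trace-through-naturality discussion you add.
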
 

\begin{proof} 
By Lemmas~\ref{Seta1} and~\ref{Seta3}, 
$[A,\Omega^{2} Sp(2)]\cong\mathbb{Z}/2\mathbb{Z}$,  
and by Lemma~\ref{CHMlemma}, $[A,\Omega^{3} Sp(2)]\cong\mathbb{Z}/8\mathbb{Z}$.  
Since 
\(\namedright{[A,\Omega^{2} Sp(2)]}{}{[A,\Omega^{3} Sp(2)]}\) 
is an injection, the lemma follows. 
\end{proof} 

Finally, we calculate $[A,N]$. 

\begin{proposition} 
   \label{AN} 
   There is an isomorphism of sets $[A,N]\cong\mathbb{Z}/4\mathbb{Z}$ 
   and the map 
   \(\namedright{\Omega^{3}_{0} Sp(2)}{}{N}\) 
   induces an epimorphism 
   \(\namedright{\mathbb{Z}/8\mathbb{Z}\cong [A,\Omega^{3} Sp(2)]}{} 
           {[A,N]\cong\mathbb{Z}/4\mathbb{Z}}\). 
\end{proposition}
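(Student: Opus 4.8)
The plan is to extract $[A,N]$ from the exact sequence~(\ref{Aexact}), whose two outer maps have just been determined, and then to count its elements by analysing the fibre action carried by the homotopy fibration~(\ref{etafib}).

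First I would examine the right-hand end of~(\ref{Aexact}). By Proposition~\ref{etastar} the map
\(\namedright{[A,\Omega Sp(2)]}{\eta^{\ast}}{[A,\Omega^{2} Sp(2)]}\)
is an isomorphism, in particular a monomorphism, so exactness of~(\ref{Aexact}) forces the map
\(\namedright{[A,N]}{}{[A,\Omega Sp(2)]}\)
to be constant. Exactness at $[A,N]$ then shows that the map
\(j_{\ast}\colon\namedright{[A,\Omega^{3} Sp(2)]}{}{[A,N]}\)
induced by
\(\namedright{\Omega^{3}_{0} Sp(2)}{}{N}\)
is onto, which gives the second assertion of the proposition and moreover identifies $[A,N]$ with the orbit set of the standard action of $[A,\Omega^{2} Sp(2)]$ on $[A,\Omega^{3} Sp(2)]$ attached to~(\ref{etafib}), the point preimages of $j_{\ast}$ being exactly the orbits.

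Next I would assemble the data of that action. By Lemma~\ref{CHMlemma}, $[A,\Omega^{3} Sp(2)]\cong\zmodeight$; by Lemmas~\ref{Seta1} and~\ref{Seta3}, $[A,\Omega^{2} Sp(2)]\cong\zmodtwo$; and by Proposition~\ref{Setastar} the connecting map
\(\namedright{[A,\Omega^{2} Sp(2)]}{(\Sigma\eta)^{\ast}}{[A,\Omega^{3} Sp(2)]}\)
is the inclusion of the order~$2$ subgroup, so its image is $\{0,4\}\subseteq\zmodeight$, and this image is the orbit of the basepoint. Since $[A,\Omega^{2} Sp(2)]$ has order $2$, every orbit has one or two elements, so $[A,N]$ has between $4$ and $7$ elements; the content of the proposition is that the action is free, so that all orbits have two elements, $[A,N]$ has exactly $4$ elements, and the coset decomposition of $\zmodeight$ by $\{0,4\}$ gives a bijection $[A,N]\cong\zmodfour$ under which $j_{\ast}$ is the projection, an epimorphism.

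The main obstacle is the freeness of the action; knowing only that the basepoint orbit has two elements is not enough to rule out fixed points. I would obtain freeness by observing that the connecting map $(\Sigma\eta)^{\ast}$ of~(\ref{etafib}) is a loop map: applying $\mapstar(-,BSp(2))$ to $\namedright{S^{4}}{\Sigma\eta}{S^{3}}$ and writing $\Sigma\eta=\mathrm{id}_{S^{1}}\wedge\eta$ exhibits
\(\namedright{\Omega^{2} Sp(2)}{(\Sigma\eta)^{\ast}}{\Omega^{3} Sp(2)}\)
as $\Omega$ of
\(\namedright{\Omega Sp(2)}{\eta^{\ast}}{\Omega^{2} Sp(2)}\).
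Since $\Omega^{3} Sp(2)$ is a loop space and the connecting map is an $H$-map, the action of $[A,\Omega^{2} Sp(2)]$ on the group $[A,\Omega^{3} Sp(2)]$ induced by the fibration is translation through $(\Sigma\eta)^{\ast}$; its nontrivial element therefore acts on $\zmodeight$ by adding $4$, which is fixed-point free. As a consistency check one can instead compute $\pi_{3}(N)=0$ directly from~(\ref{etafib}) and combine Lemma~\ref{gammanuprime} with the cofibration~(\ref{Acofib}) to bound $[A,N]$ from the other side. Altogether this gives $[A,N]\cong\zmodfour$ together with the asserted epimorphism
\(\namedright{\zmodeight\cong[A,\Omega^{3} Sp(2)]}{}{[A,N]\cong\zmodfour}\).
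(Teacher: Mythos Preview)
Your argument is correct and follows the same route as the paper, feeding Propositions~\ref{etastar} and~\ref{Setastar} into the exact sequence~(\ref{Aexact}). The paper simply reads off $[A,N]\cong\zmodfour$ as the cokernel in the resulting short exact sequence $0\to\zmodtwo\to\zmodeight\to[A,N]\to 0$, whereas you add the justification (left implicit in the paper) that~(\ref{etafib}) is a principal fibration, so the $[A,\Omega^{2}Sp(2)]$-action on $[A,\Omega^{3}Sp(2)]$ is by translation through $(\Sigma\eta)^{\ast}$ and hence free.
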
 

\begin{proof} 
By Propositions~\ref{etastar} and~\ref{Setastar} the exact sequence in~(\ref{Aexact}) 
simplifies to an exact sequence 
\[\nameddddright{0}{}{\mathbb{Z}/2\mathbb{Z}}{} 
       {\mathbb{Z}/8\mathbb{Z}\cong [A,\Omega^{3} Sp(2)]}{}{[A,N]}{}{0}.\] 
The proposition follows immediately. 
\end{proof}

\section{The proof of Theorem~\ref{count}~(b)} 
\label{sec:counting} 

In this section we prove Theorem~\ref{count}~(b). A key ingredient is  
Theorem~\ref{Acount},which To simplify notation, 
let $N_{k}=\mapstar_{k}(\cptwo,BSp(2))$.  Applying $[A,\ \ ]$ 
to~(\ref{natdgrm}) in the $G=Sp(2)$ case we obtain a commutative diagram 
\begin{equation} 
  \label{lowerbddgrm} 
  \diagram 
      [A,Sp(2)]\rto^-{(\partial_{k})_{\ast}}\ddouble 
           & [A,\Omega^{3}_{0} Sp(2)]\rto\dto  
           & [A,B\gk(S^{4})]\rto\dto & [A,BSp(2)]\ddouble \\ 
      [A,Sp(2)]\rto^-{(\overline{\partial}_{k})_{\ast}} 
           & [A,N_{k}]\rto & [A,B\gk(\cptwo)]\rto & [A,BSp(2)]. 
  \enddiagram 
\end{equation} 
Since $A$ is connected, there is an isomorphism 
$[A,N_{0}]\cong [A,N]$, where $N=\mapstar(\cptwo,BSp(2))$ and 
$[A,N]$ was calculated in Proposition~\ref{AN}. By~\cite{Su}, $N_{k}\simeq N_{0}$ 
for every $k\in\mathbb{Z}$, so $[A,N_{k}]\cong [A,N]$ as well. 

\begin{lemma} 
   \label{partialimage} 
   The image of $(\overline{\partial}_{k})_{\ast}$ is $\mathbb{Z}/(4/(4,k))\,\mathbb{Z}$. 
\end{lemma}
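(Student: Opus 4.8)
The plan is to compute the image of $(\overline{\partial}_k)_\ast$ by tracking how $\partial_k$ and the map $\Omega^3_0 Sp(2)\to N$ act on the relevant homotopy sets, using the linearity statement $\partial_k\simeq k\circ\partial_1$ from Lemma~\ref{Lang} together with the calculation of $[A,N]$ in Proposition~\ref{AN}. First I would recall from Lemma~\ref{CHMlemma} that $[A,\Omega^3_0 Sp(2)]\cong\mathbb{Z}/8\mathbb{Z}$ with the generator represented by $\partial_1\circ s$, where $s\colon A\to Sp(2)$ is the skeletal inclusion; equivalently, $(\partial_1)_\ast\colon [A,Sp(2)]\to[A,\Omega^3_0 Sp(2)]$ hits the generator of $\mathbb{Z}/8\mathbb{Z}$ (this is what makes $[A,Sp(2)]\to[A,\Omega^3_0 Sp(2)]$ surjective). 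By Lemma~\ref{Lang}, $\partial_k\simeq k\circ\partial_1$, so $(\overline{\partial}_k)_\ast = (\text{projection})_\ast\circ k\circ(\partial_1)_\ast$, and the image of $(\partial_k)_\ast$ in $[A,\Omega^3_0 Sp(2)]\cong\mathbb{Z}/8\mathbb{Z}$ is the subgroup generated by $k$, i.e. $(4,k)$... wait — more precisely the cyclic subgroup of $\mathbb{Z}/8\mathbb{Z}$ generated by $k$, which has order $8/(8,k)$.

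Then I would push this forward along the epimorphism $\rho\colon[A,\Omega^3_0 Sp(2)]\cong\mathbb{Z}/8\mathbb{Z}\twoheadrightarrow[A,N]\cong\mathbb{Z}/4\mathbb{Z}$ supplied by Proposition~\ref{AN}, whose kernel is the order-$2$ subgroup (the image of $[A,\Omega^2 Sp(2)]\cong\mathbb{Z}/2\mathbb{Z}$ under $(\Sigma\eta)^\ast$, by Proposition~\ref{Setastar}). The image of $(\overline{\partial}_k)_\ast$ is therefore $\rho$ applied to $\langle k\rangle\subseteq\mathbb{Z}/8\mathbb{Z}$. Identifying $\rho$ with reduction $\mathbb{Z}/8\mathbb{Z}\to\mathbb{Z}/4\mathbb{Z}$, the image is the subgroup of $\mathbb{Z}/4\mathbb{Z}$ generated by $k$, which is $(4,k)\cdot\mathbb{Z}/4\mathbb{Z}\cong\mathbb{Z}/(4/(4,k))\mathbb{Z}$. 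One should double-check the edge case where $k$ is even: if $8\mid$ something, the subgroup $\langle k\rangle\subseteq\mathbb{Z}/8\mathbb{Z}$ has order $8/(8,k)$, and after reducing mod $4$ one still gets a subgroup of order $4/(4,k)$ of $\mathbb{Z}/4\mathbb{Z}$, since the kernel of $\rho$ sits inside $\langle k\rangle$ exactly when $k$ is even — so the orders match up correctly in all cases.

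The main obstacle is making precise that $(\partial_1)_\ast\colon[A,Sp(2)]\to[A,\Omega^3_0 Sp(2)]$ is surjective onto the generator, so that the image of $(\partial_k)_\ast$ is genuinely the full cyclic subgroup $\langle k\rangle$ rather than something smaller; this follows from Lemma~\ref{CHMlemma}, since the generator of $[A,\Omega^3_0 Sp(2)]$ is $\partial_1\circ s$ and $s\in[A,Sp(2)]$, so $s$ maps to the generator and hence $k\cdot s$ maps to $k$ times the generator. Combined with the commutativity of the left square in~(\ref{lowerbddgrm}) and the identification $[A,N_k]\cong[A,N]$ noted before the lemma, this gives the image of $(\overline{\partial}_k)_\ast$ as $\mathbb{Z}/(4/(4,k))\mathbb{Z}$ inside $[A,N]\cong\mathbb{Z}/4\mathbb{Z}$, as claimed.
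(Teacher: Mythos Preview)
Your argument is correct and follows essentially the same route as the paper: use Lemma~\ref{CHMlemma} to see that $(\partial_{1})_{\ast}$ hits a generator of $[A,\Omega^{3}_{0}Sp(2)]\cong\mathbb{Z}/8\mathbb{Z}$, apply $\partial_{k}\simeq k\circ\partial_{1}$ from Lemma~\ref{Lang} to get image $\langle k\rangle\subset\mathbb{Z}/8\mathbb{Z}$, then reduce mod~$4$ via Proposition~\ref{AN} and the left square of~(\ref{lowerbddgrm}). Your extra checks on surjectivity and the even-$k$ edge case are sound but not strictly needed beyond what the paper records.
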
 

\begin{proof} 
By Lemma~\ref{CHMlemma}, $[A,\Omega^{3}_{0} Sp(2)]\cong\mathbb{Z}/8\mathbb{Z}$ 
and the composite 
\(\nameddright{A}{s}{Sp(2)}{\partial_{1}}{\Omega^{3}_{0} Sp(2)}\) 
represents a generator. By Lemma~\ref{Lang}, $\partial_{k}\simeq k\cdot\partial_{1}$, 
so the image of $(\partial_{k})^{\ast}$ in~(\ref{lowerbddgrm}) is 
$\mathbb{Z}/(8/(8,k))\,\mathbb{Z}$. By Proposition~\ref{AN}, the map 
\(\namedright{[A,\Omega^{3}_{0} Sp(2)]}{}{[A,N_{k}]}\) 
is reduction mod $4$. Thus the commutativity of the left square 
in~(\ref{lowerbddgrm}) implies that the image of $(\overline{\partial}_{k})^{\ast}$ 
is $\mathbb{Z}/(4/(4,k))\,\mathbb{Z}$. 
\end{proof} 

\begin{lemma} 
   \label{ABSp} 
   There is an isomorphism $[A,BSp(2)]\cong 0$. 
\end{lemma}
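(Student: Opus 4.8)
The plan is to compute $[A,BSp(2)]$ directly from the $CW$-structure on $A$, which has cells only in dimensions $3$ and $7$. Recall $A$ sits in the homotopy cofibration
\(\nameddright{S^{6}}{\nu'}{S^{3}}{i}{A}\),
so applying $[\ \ ,BSp(2)]$ yields an exact sequence of pointed sets (in fact of groups in the range where the source is a suspension)
\[\namedddright{[S^{7},BSp(2)]}{q^{\ast}}{[A,BSp(2)]}{i^{\ast}}{[S^{3},BSp(2)]}{(\nu')^{\ast}}{[S^{6},BSp(2)]}.\]
Since $[S^{n},BSp(2)]\cong\pi_{n-1}(Sp(2))$, I would identify the outer three terms using Lemma~\ref{MT}: $[S^{3},BSp(2)]\cong\pi_{2}(Sp(2))=0$ and $[S^{7},BSp(2)]\cong\pi_{6}(Sp(2))=0$ (the $2$-components vanish, and we are localized at $2$). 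The vanishing of the first term gives that $i^{\ast}$ is injective onto a subset of $0$, hence $[A,BSp(2)]$ maps to $0$; the vanishing of the first term $[S^{7},BSp(2)]=0$ shows $q^{\ast}$ is trivial, so every element of $[A,BSp(2)]$ lies in the image of $q^{\ast}$, forcing $[A,BSp(2)]\cong 0$.

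More carefully, the cleanest argument is: exactness at $[A,BSp(2)]$ says the kernel of $i^{\ast}$ equals the image of $q^{\ast}$. Since $[S^{7},BSp(2)]\cong\pi_{6}(Sp(2))\cong 0$, the image of $q^{\ast}$ is trivial, so $i^{\ast}$ is injective. Since $[S^{3},BSp(2)]\cong\pi_{2}(Sp(2))\cong 0$, an injection into the zero set forces $[A,BSp(2)]\cong 0$. This is the entire proof; it is a short exact-sequence sandwich and no genuine computation is needed beyond quoting the homotopy groups of $Sp(2)$ from Lemma~\ref{MT}.

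I do not anticipate any real obstacle here. The only small point of care is that $[A,BSp(2)]$ is a set rather than a group a priori (the source $A$ need not be a suspension, though in fact $A$ is a suspension of a $2$-cell complex so it does carry a cogroup structure), but the argument only uses exactness of the Puppe sequence as pointed sets, which is valid regardless. One should also confirm the localization at $2$ is harmless: the relevant groups $\pi_{2}(Sp(2))$ and $\pi_{6}(Sp(2))$ are in fact zero integrally (as $Sp(2)$ is $2$-connected and $\pi_{6}(Sp(2))=0$ by the long exact sequence of \(\namedright{S^{3}}{}{Sp(2)}{}{S^{7}\)), so the conclusion holds $2$-locally as needed in the surrounding argument. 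Thus the proof reduces to invoking Lemma~\ref{MT} together with the Puppe cofibre sequence for $A$.

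\begin{proof}
The homotopy cofibration \(\nameddright{S^{6}}{\nu'}{S^{3}}{i}{A}\) from~(\ref{Acofib}) induces an exact sequence of pointed sets
\[\namedddright{[S^{7},BSp(2)]}{q^{\ast}}{[A,BSp(2)]}{i^{\ast}}{[S^{3},BSp(2)]}{(\nu')^{\ast}}{[S^{6},BSp(2)]}.\]
Since $[S^{n},BSp(2)]\cong\pi_{n-1}(Sp(2))$, Lemma~\ref{MT} gives $[S^{3},BSp(2)]\cong\pi_{2}(Sp(2))\cong 0$ and $[S^{7},BSp(2)]\cong\pi_{6}(Sp(2))\cong 0$. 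As the image of $q^{\ast}$ is trivial, exactness at $[A,BSp(2)]$ implies that $i^{\ast}$ is injective. But its target $[S^{3},BSp(2)]$ is a single point, so $[A,BSp(2)]\cong 0$.
\end{proof}
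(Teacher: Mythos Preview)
Your proof is correct but takes a different route from the paper. The paper first stabilizes: since $\dim A=7$ and the map $BSp(2)\to BSp(\infty)$ is $10$-connected, it identifies $[A,BSp(2)]$ with $\widetilde{K}_{Sp}(A)$, and then uses the cofibration $S^{3}\to A\to S^{7}$ together with the Bott periodicity fact $\widetilde{K}_{Sp}(S^{4m-1})=0$ to conclude. You instead stay unstable and apply the Puppe sequence directly to $BSp(2)$, using $\pi_{2}(Sp(2))=0$ (connectivity) and $\pi_{6}(Sp(2))=0$ (Lemma~\ref{MT}, or the long exact sequence of $S^{3}\to Sp(2)\to S^{7}$). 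Your argument is more elementary in that it avoids $K$-theory and the stabilization step; the paper's version is cleaner in that it reduces both vanishing inputs to a single periodicity statement and is manifestly integral. One small nit: Lemma~\ref{MT}'s table begins at $i=4$, so strictly speaking $\pi_{2}(Sp(2))=0$ is not quoted from there---but you already note this follows from $Sp(2)$ being $2$-connected, which is fine.
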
 

\begin{proof} 
The dimension of $A$ is $7$ and the map 
\(\namedright{BSp(2)}{}{BSp(\infty)}\) 
induced by the standard inclusion of $Sp(2)$ into $Sp(\infty)$ is 
$10$-connected. Therefore $[A,BSp(2)]\cong [A,BSp(\infty)]$. 
But $[A,BSp(\infty)]$ is $\widetilde{K}_{Sp}(A)$, the reduced symplectic 
$K$-theory of $A$.  Since $\widetilde{K}_{Sp}(S^{4m-1})=0$ for 
every~$m\geq 1$, applying $\widetilde{K}_{Sp}$ to the homotopy cofibration 
\(\nameddright{S^{3}}{}{A}{}{S^{7}}\) 
shows that $\widetilde{K}_{Sp}(A)\cong 0$. 
\end{proof} 

\begin{proposition} 
   \label{ABgk} 
   There is an isomorphism of sets $[A,B\gk(\cptwo)]\cong\mathbb{Z}/(4,k)\,\mathbb{Z}$. 
\end{proposition}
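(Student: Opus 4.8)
The plan is to extract the needed information from the commutative diagram~(\ref{lowerbddgrm}) by a diagram chase. The bottom row of~(\ref{lowerbddgrm}) is an exact sequence of pointed sets (coming from the fibration sequence defining $B\gk(\cptwo)$), so the relevant portion reads
\[\namedddright{[A,Sp(2)]}{(\overline{\partial}_{k})_{\ast}}{[A,N_{k}]}{}{[A,B\gk(\cptwo)]}{}{[A,BSp(2)]}.\]
By Lemma~\ref{ABSp} the last term vanishes, so the map $[A,N_{k}]\to[A,B\gk(\cptwo)]$ is surjective, and by exactness its fibres are exactly the orbits of the action of $[A,Sp(2)]$ on $[A,N_{k}]$ coming from $(\overline{\partial}_{k})_{\ast}$. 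Thus $[A,B\gk(\cptwo)]$ is identified with the quotient of $[A,N_{k}]$ by the image of $(\overline{\partial}_{k})_{\ast}$.

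First I would record that $[A,N_{k}]\cong[A,N]\cong\mathbb{Z}/4\mathbb{Z}$ by Proposition~\ref{AN} together with the remarks preceding this proposition (using $N_{k}\simeq N_{0}$ from~\cite{Su} and connectivity of $A$). Next I would invoke Lemma~\ref{partialimage}, which says the image of $(\overline{\partial}_{k})_{\ast}$ inside $[A,N_{k}]\cong\mathbb{Z}/4\mathbb{Z}$ is the subgroup $\mathbb{Z}/(4/(4,k))\mathbb{Z}$, i.e. the subgroup of order $4/(4,k)$. Then the quotient $[A,N_{k}]/\mathrm{im}\,(\overline{\partial}_{k})_{\ast}$ has order $4/(4/(4,k))=(4,k)$. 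Since this quotient is a quotient of the cyclic group $\mathbb{Z}/4\mathbb{Z}$ it is itself cyclic, hence isomorphic as a set (indeed as a group) to $\mathbb{Z}/(4,k)\mathbb{Z}$. Identifying this quotient with $[A,B\gk(\cptwo)]$ via the exact sequence above completes the argument.

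There is one subtlety to be careful about: the bottom row of~(\ref{lowerbddgrm}) is a sequence of pointed sets and group actions, not of groups throughout, so I should justify that the map $[A,N_{k}]\to[A,B\gk(\cptwo)]$ induces a \emph{bijection} from the set of $(\overline{\partial}_{k})_{\ast}$-orbits onto $[A,B\gk(\cptwo)]$ — this is the standard exactness of the homotopy sequence of the fibration $N_{k}\to B\gk(\cptwo)\to BSp(2)$ applied with domain $A$, combined with the vanishing $[A,BSp(2)]=0$ which forces every element of $[A,B\gk(\cptwo)]$ to lift to $[A,N_{k}]$. Since $[A,N_{k}]$ is an abelian group and $(\overline{\partial}_{k})_{\ast}$ is a homomorphism, its orbits are the cosets of a subgroup and the orbit set is the honest quotient group; this is the only place where the set-versus-group distinction matters, and it causes no trouble. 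I do not expect any real obstacle here — the work has all been done in Lemmas~\ref{partialimage} and~\ref{ABSp} and Proposition~\ref{AN}; this proposition is simply the bookkeeping step that assembles them.
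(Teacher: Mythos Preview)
Your proof is correct and follows essentially the same approach as the paper's own proof. The paper is even terser: it simply combines Lemma~\ref{ABSp} with the bottom row of~(\ref{lowerbddgrm}) to reduce to the cokernel of $(\overline{\partial}_{k})_{\ast}$ and then quotes Lemma~\ref{partialimage}; your discussion of the set-versus-group subtlety and the identification of orbits with cosets makes explicit a point the paper leaves implicit.
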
 

\begin{proof} 
By Lemma~\ref{ABSp} the exact sequence in the bottom row of~(\ref{lowerbddgrm}) is 
\[\namedddright{[A,Sp(2)]}{(\overline{\partial}_{k})^{\ast}}{[A,\Omega^{3}_{0} Sp(2)]} 
       {}{[A,B\gk(\cptwo)]}{}{0}.\] 
Thus $[A,B\gk(\cptwo)]$ is the cokernel of $(\overline{\partial}_{k})^{\ast}$. 
By Lemma~\ref{partialimage}, this cokernel is $\mathbb{Z}/(4,k)\,\mathbb{Z}$. 
\end{proof} 

What we would like to say is that a homotopy equivalence $\gk(\cptwo)\simeq\g_{\ell}(\cptwo)$ 
implies that there is a bijection of sets $[A,B\gk(\cptwo)]\cong [A,B\g_{\ell}(\cptwo)]$. This 
would imply that $(4,k)=(4,\ell)$ and prove Theorem~\ref{count}~(b). However, since $A$ 
is not a suspension, and possibly not even a co-$H$-space, the desired implication is 
not immediate through adjunction. Instead, a different argument is needed. 

\begin{lemma} 
   \label{pi3Bgauge} 
   For any $k\in\mathbb{Z}$ we have $\pi_{3}(B\gk(\cptwo))\cong 0$. 
\end{lemma}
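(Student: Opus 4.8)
The plan is to compute $\pi_3(B\gk(\cptwo))$ directly from the evaluation fibration and the known low-dimensional homotopy of the mapping space. Recall from~(\ref{evfib}) that there is a homotopy fibration sequence $\namedright{\mapstar_k(\cptwo,BSp(2))}{}{B\gk(\cptwo)}{ev}{BSp(2)}$, and that $\mapstar_k(\cptwo,BSp(2))\simeq\mapstar_0(\cptwo,BSp(2)) = N$ by~\cite{Su}. Applying $\pi_3$ to this fibration gives an exact sequence
\[
  \pi_4(BSp(2))\longrightarrow\pi_3(N)\longrightarrow\pi_3(B\gk(\cptwo))\longrightarrow\pi_3(BSp(2)).
\]
Since $BSp(2)$ is $3$-connected, both $\pi_4(BSp(2))\cong\pi_3(Sp(2))\cong 0$ and $\pi_3(BSp(2))\cong 0$. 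Therefore the middle map is an isomorphism: $\pi_3(B\gk(\cptwo))\cong\pi_3(N)$.

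So the task reduces to showing $\pi_3(N)\cong 0$, and this has essentially already been done. Using the pointed exponential law, $\Omega N = \Omega\mapstar(\cptwo,BSp(2))\simeq\mapstar(\cptwo,Sp(2))$, so $\pi_3(N)\cong\pi_2(\mapstar(\cptwo,Sp(2)))$. Unwinding one degree lower: from the fibration $\namedright{\Omega^4 Sp(2)}{}{\mapstar(\cptwo,Sp(2))}{}{\Omega^2 Sp(2)}$ coming from the middle row of~(\ref{bigfibdgrm}), the relevant exact sequence on $\pi_2$ is sandwiched between $\pi_2(\Omega^4 Sp(2))\cong\pi_6(Sp(2))$ and $\pi_2(\Omega^2 Sp(2))\cong\pi_4(Sp(2))$, together with $\pi_3(\Omega^3 S^3)$-type error terms from the vertical fibrations; by Lemma~\ref{MT} we have $\pi_6(Sp(2))\cong 0$, and a short diagram chase through~(\ref{bigfibdgrm}) analogous to the proof of Lemma~\ref{pi3} but one degree down pins the group to zero. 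Alternatively, one can simply note that $\mapstar(\cptwo,Sp(2))$ is $2$-connected: both $\Omega^4 Sp(2)$ (since $Sp(2)$ is $2$-connected, $\Omega^4 Sp(2)$ has bottom nonzero homotopy in degree $3$, wait---$\pi_i(\Omega^4 Sp(2))=\pi_{i+4}(Sp(2))$, which vanishes for $i\le 2$ as $\pi_3(Sp(2))\cong\mathbb{Z}$ sits in degree $i=-1$; more carefully $\pi_2(\Omega^4 Sp(2))\cong\pi_6(Sp(2))\cong 0$ and $\pi_1,\pi_0$ likewise vanish) and $\Omega^2 Sp(2)$ (bottom homotopy $\pi_0(\Omega^2 Sp(2))\cong\pi_2(Sp(2))\cong 0$, $\pi_1\cong\pi_3(Sp(2))\cong\mathbb{Z}$ --- so this is \emph{not} $2$-connected) need care. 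The cleanest route is the diagram chase.

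The main obstacle, such as it is, is bookkeeping: one must verify that the error terms in the relevant column of~(\ref{bigfibdgrm}) --- namely $\pi_3(\Omega^3 S^3)$, $\pi_3(\Omega^3 S^7)$ and neighbours --- do not contribute, i.e. that the comparison with the sphere fibrations forces $\pi_2(\mapstar(\cptwo,Sp(2)))\cong 0$. Since $\pi_2(\Omega^3 S^7)\cong\pi_5(S^7)\cong 0$ and $\pi_2(\Omega^2 S^7)\cong\pi_4(S^7)\cong 0$, the bottom row of~(\ref{bigfibdgrm}) gives $\pi_2(\mapstar(\cptwo,S^7))\cong 0$; and $\pi_2(\mapstar(\cptwo,S^3))$ is controlled by low-degree homotopy of $S^3$, all of which vanishes in the needed range ($\pi_4(S^3)\cong\mathbb{Z}/2$, $\pi_5(S^3)\cong\mathbb{Z}/2$, but the relevant exact sequence $\pi_2(\Omega^4 S^3)\to\pi_2(\mapstar(\cptwo,S^3))\to\pi_2(\Omega^2 S^3)$ reads $\pi_6(S^3)\to\pi_2(\mapstar(\cptwo,S^3))\to\pi_4(S^3)$ with a correction from $\pi_2(\Omega^3 S^3)\cong\pi_5(S^3)$, and tracing through $\eta^{\ast}$ kills the $\pi_4(S^3)$ contribution exactly as in Lemma~\ref{pi3}). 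Feeding $\pi_2(\mapstar(\cptwo,S^3))$ and $\pi_2(\mapstar(\cptwo,S^7))\cong 0$ into the middle column fibration $\namedright{\mapstar(\cptwo,S^3)}{}{\mapstar(\cptwo,Sp(2))}{}{\mapstar(\cptwo,S^7)}$ then yields $\pi_2(\mapstar(\cptwo,Sp(2)))\cong 0$, hence $\pi_3(B\gk(\cptwo))\cong 0$.

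I would present this economically: invoke~(\ref{evfib}) and the connectivity of $BSp(2)$ to reduce to $\pi_3(N)\cong 0$, rewrite via the exponential law as $\pi_2(\mapstar(\cptwo,Sp(2)))$, and then run the degree-$2$ analogue of the argument in Lemma~\ref{pi3} on diagram~(\ref{bigfibdgrm}), citing Lemmas~\ref{Toda} and~\ref{MT} (or~\cite{To} and~\cite{MT}) for the sphere and $Sp(2)$ homotopy groups. Since everything in the relevant range is either zero or killed by the $\eta$-precomposition maps already analysed, no genuinely new input is required --- the proof is a few lines of exact-sequence chasing.
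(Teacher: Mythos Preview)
Your approach is essentially the paper's: reduce via the evaluation fibration to showing $\pi_{3}(N)\cong 0$, then use the fibration sequence induced by the $\cptwo$ cofibration together with the fact that $\eta^{\ast}\colon\pi_{4}(Sp(2))\to\pi_{5}(Sp(2))$ is an isomorphism and $\pi_{6}(Sp(2))\cong 0$. The paper does exactly this in two lines, working directly with the middle row (your $\Omega^{4}Sp(2)\to\mapstar(\cptwo,Sp(2))\to\Omega^{2}Sp(2)$, shifted once), without touching the $S^{3}$ or $S^{7}$ rows of~(\ref{bigfibdgrm}) at all.

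Two points to clean up. First, you write $\pi_{4}(BSp(2))\cong\pi_{3}(Sp(2))\cong 0$; this is false, since $\pi_{3}(Sp(2))\cong\mathbb{Z}$. Fortunately it does not matter: once $\pi_{3}(N)\cong 0$ and $\pi_{3}(BSp(2))\cong 0$, the exact sequence already forces $\pi_{3}(B\gk(\cptwo))\cong 0$ regardless of $\pi_{4}(BSp(2))$. Second, the excursion through $\pi_{2}(\mapstar(\cptwo,S^{3}))$ and $\pi_{2}(\mapstar(\cptwo,S^{7}))$ is unnecessary: the exact sequence from the $Sp(2)$ row alone, namely
\[
\pi_{6}(Sp(2))\longrightarrow\pi_{3}(N)\longrightarrow\pi_{4}(Sp(2))\stackrel{\eta^{\ast}}{\longrightarrow}\pi_{5}(Sp(2)),
\]
reads $0\to\pi_{3}(N)\to\zmodtwo\stackrel{\cong}{\to}\zmodtwo$ by Lemma~\ref{MT}, and you are done. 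Strip out the digressions and the internal ``wait'' commentary and you have the paper's proof.
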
 

\begin{proof} 
There is a homotopy fibration 
\begin{equation} 
    \label{pi3eq1} 
    \nameddright{\mapstar_{k}(\cptwo,BSp(2))}{}{B\gk(\cptwo)}{}{BSp(2)} 
\end{equation} 
and the homotopy cofibration sequence 
\(\namedddright{S^{3}}{\eta}{S^{2}}{}{\cptwo}{}{S^{4}}\) 
induces a homotopy fibration sequence 
\begin{equation} 
   \label{pi3eq2} 
   \namedddright{\Omega^{3}_{0} Sp(2)}{}{\mapstar_{k}(\cptwo,BSp(2))}{} 
         {\Omega Sp(2)}{\eta^{\ast}}{\Omega^{2} Sp(2)}. 
\end{equation} 
Apply $\pi_{3}$ to~(\ref{pi3eq2}). By Lemma~\ref{MT}, 
$\pi_{3}(\Omega^{3}_{0} Sp(2))\cong\pi_{6}(Sp(2))\cong 0$, and 
$\pi_{3}(\eta^{\ast})$ is the same as the map 
\(\namedright{\pi_{4}(Sp(2))}{(\Sigma^{2}\eta)^{\ast}}{\pi_{5}(Sp(2))}\), 
which is an isomorphism. Hence $\pi_{3}(\mapstar_{k}(\cptwo,BSp(2)))\cong 0$. 
Now apply $\pi_{3}$ to~(\ref{pi3eq1}). By connectivity, $\pi_{3}(BSp(2))\cong 0$ 
so as $\pi_{3}(\mapstar_{k}(\cptwo,BSp(2)))\cong 0$ we obtain $\pi_{3}(B\gk(\cptwo))\cong 0$. 
\end{proof} 

Consider the homotopy cofibration sequence 
\(\namedddright{S^{3}}{}{A}{q}{S^{7}}{\Sigma\nu'}{S^{4}}\). 
Applying $[\ \ ,B\gk(\cptwo) ]$ gives an exact sequence of pointed sets 
\[\namedddright{[S^{4},B\gk(\cptwo)]}{(\Sigma\nu)^{\ast}}{[S^{7},B\gk(\cptwo)]}{q^{\ast}} 
        {[A,B\gk(\cptwo)]}{}{[S^{3},B\gk(\cptwo)]}.\]        
By Lemma~\ref{pi3Bgauge}, $\pi_{3}(B\gk(\cptwo))\cong 0$, so we really have 
an exact sequence of pointed sets
\begin{equation} 
   \label{Anuexact} 
   \namedddright{[S^{4},B\gk(\cptwo)]}{(\Sigma\nu')^{\ast}}{[S^{7},B\gk(\cptwo)]}{q^{\ast}} 
        {[A,B\gk(\cptwo)]}{}{0}. 
\end{equation} 
Note that $[S^{4},B\gk(\cptwo)]$ and $[S^{7},B\gk(\cptwo)]$ are groups and 
$(\Sigma\nu')^{\ast}$ is a group homomorphism, but $[A,B\gk(\cptwo)]$ is only 
a set since $A$ may not be a co-$H$-space, and $q^{\ast}$ is therefore only a 
map of sets. So exactness and the fact that $q^{\ast}$ is an epimorphism implies 
that there is a bijection between the set $[A,B\gk(\cptwo)]$ and the group 
$\mbox{coker}\,(\Sigma\nu')^{\ast}$. 

Now we prove Theorems~\ref{Acount} and~\ref{count}~(b), which are stated integrally. 

\begin{theorem} 
   \label{Acount} 
   If $\gk(\cptwo)\simeq\g_{\ell}(\cptwo)$ then $(20,k)=(20,\ell)$. 
\end{theorem}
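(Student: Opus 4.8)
The plan is to combine the local computation $[A,B\gk(\cptwo)]\cong\zmodfour/(4,k)\,\mathbb{Z}$ (Proposition~\ref{ABgk}, refined via the bijection with $\mbox{coker}\,(\Sigma\nu')^{\ast}$ coming from~(\ref{Anuexact})) with the $S^{4}$-side information already available from the analogous analysis over $S^{4}$, and to play the two cofibre sequences against each other. A homotopy equivalence $\gk(\cptwo)\simeq\g_{\ell}(\cptwo)$ induces a homotopy equivalence $B\gk(\cptwo)\simeq B\g_{\ell}(\cptwo)$, hence bijections $[Y,B\gk(\cptwo)]\cong[Y,B\g_{\ell}(\cptwo)]$ for \emph{every} space $Y$, compatibly in $Y$. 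Applying this with $Y=S^{7}$, $Y=S^{4}$ and $Y=A$ and using the naturality of~(\ref{Anuexact}) will force the cokernels of the two copies of $(\Sigma\nu')^{\ast}$ to have the same order, giving $(4,k)=(4,\ell)$. To upgrade $4$ to $20$ one must separately extract the odd-primary (i.e.\ the prime $5$) information: by Theorems~\ref{oddpdecomp} and~\ref{Sp2gauge}, or directly by the order of $\partial_{1}$ localized at odd primes, a homotopy equivalence $\gk(\cptwo)\simeq\g_{\ell}(\cptwo)$ forces $(5,k)=(5,\ell)$. Since $20=4\cdot 5$ with $4$ and $5$ coprime, combining the $2$-local conclusion $(4,k)=(4,\ell)$ with $(5,k)=(5,\ell)$ yields $(20,k)=(20,\ell)$.

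More concretely, the first step is to record that the equivalence $B\gk(\cptwo)\simeq B\g_{\ell}(\cptwo)$ gives, for each sphere $S^{n}$, a bijection $\pi_{n}(B\gk(\cptwo))\cong\pi_{n}(B\g_{\ell}(\cptwo))$, and in particular $\pi_{3}$ vanishes on both sides (Lemma~\ref{pi3Bgauge}), so~(\ref{Anuexact}) holds for both $k$ and $\ell$. The second step is to understand $[S^{7},B\gk(\cptwo)]$ and the map $(\Sigma\nu')^{\ast}\colon[S^{4},B\gk(\cptwo)]\to[S^{7},B\gk(\cptwo)]$: from the fibration $\mapstar_{k}(\cptwo,BSp(2))\to B\gk(\cptwo)\to BSp(2)$ and the fibration~(\ref{pi3eq2}), one computes $\pi_{4}(B\gk(\cptwo))$ and $\pi_{7}(B\gk(\cptwo))$ $2$-locally using Lemmas~\ref{pi3}, \ref{pi6}, \ref{Toda} and~\ref{MT}, and identifies the effect of precomposition with $\Sigma\nu'$ (which on the $\mapstar_{0}$ summand amounts to the class $\gamma\circ\Sigma\nu'$ analysed in Lemma~\ref{gammanuprime}). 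The third step is the bookkeeping: the equivalence is natural in $Y$, so it carries the exact sequence~(\ref{Anuexact}) for $k$ to that for $\ell$; hence $|\mbox{coker}\,(\Sigma\nu')^{\ast}_{k}|=|\mbox{coker}\,(\Sigma\nu')^{\ast}_{\ell}|$, and by Proposition~\ref{ABgk} this common order is $(4,k)=(4,\ell)$.

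The fourth step handles the odd-primary part. Localized at an odd prime $p$, Theorem~\ref{oddpdecomp} gives $\gk(\cptwo)\simeq\gk(S^{4})\times\Omega^{2}Sp(2)$, so a homotopy equivalence $\gk(\cptwo)\simeq\g_{\ell}(\cptwo)$ yields $\gk(S^{4})\simeq\g_{\ell}(S^{4})$ $p$-locally after cancelling the common factor $\Omega^{2}Sp(2)$ (using that $\Omega^{2}Sp(2)$ is an $H$-space and a suitable cancellation argument, or by reading off $p$-local homotopy groups directly). By Theorem~\ref{Sp2gauge}, together with the fact that the only odd prime dividing $40$ is $5$, this forces $(5,k)=(5,\ell)$. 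Assembling $(4,k)=(4,\ell)$ and $(5,k)=(5,\ell)$ by the Chinese Remainder Theorem gives $(20,k)=(20,\ell)$, completing the proof.

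The main obstacle I expect is the second step: since $A$ is not known to be a co-$H$-space, one cannot argue purely through adjunction, and the passage from the equivalence $B\gk(\cptwo)\simeq B\g_{\ell}(\cptwo)$ to an equality of the orders of $[A,B\gk(\cptwo)]$ and $[A,B\g_{\ell}(\cptwo)]$ must be routed through the spheres $S^{4}$ and $S^{7}$ via~(\ref{Anuexact}) and its naturality. Getting the identification of $(\Sigma\nu')^{\ast}$ on $[S^{4},B\gk(\cptwo)]$ precise enough to pin down the cokernel order — rather than just bounding it — is the delicate point, and this is where the earlier $2$-local homotopy-group computations (Lemmas~\ref{pi3}, \ref{pi6}, \ref{pi9} and the order computations of Section~5) do the real work.
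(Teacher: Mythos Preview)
There is a genuine gap in your argument, and it occurs right at the outset. You assert that a homotopy equivalence $\gk(\cptwo)\simeq\g_{\ell}(\cptwo)$ induces a homotopy equivalence $B\gk(\cptwo)\simeq B\g_{\ell}(\cptwo)$. This is not justified: delooping a homotopy equivalence requires that it be a loop map (equivalently, an $A_{\infty}$-map), and the hypothesis gives you only an equivalence of underlying spaces. Without $B\gk(\cptwo)\simeq B\g_{\ell}(\cptwo)$ you cannot conclude that $[Y,B\gk(\cptwo)]\cong[Y,B\g_{\ell}(\cptwo)]$ naturally in $Y$, and the rest of your argument collapses.

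The paper's proof routes around exactly this problem, and in fact the fix is the point you nearly reach in your final paragraph but misidentify. What the equivalence $e\colon\gk(\cptwo)\to\g_{\ell}(\cptwo)$ \emph{does} give you, via the adjunction $[\Sigma X,B\gk(\cptwo)]\cong[X,\gk(\cptwo)]$, is an isomorphism $[\Sigma X,B\gk(\cptwo)]\cong[\Sigma X,B\g_{\ell}(\cptwo)]$ for every $X$, natural with respect to suspended maps $\Sigma f\colon\Sigma X\to\Sigma X'$. Since $S^{4}=\Sigma S^{3}$, $S^{7}=\Sigma S^{6}$, and $(\Sigma\nu')^{\ast}$ is induced by the suspended map $\Sigma\nu'$, the left two columns of the comparison diagram for~(\ref{Anuexact}) are legitimate, and hence the two cokernels agree. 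Your confusion is that you thought the obstacle was passing from $B\gk$ to $[A,B\gk]$ because $A$ is not a co-$H$-space; in fact the obstacle is passing from $\gk$ to $B\gk$ at all, and the adjunction on suspensions is precisely what lets you bypass $B\gk$ while still computing the cokernel that bijects with $[A,B\gk(\cptwo)]$. Once this is repaired, your outline and the paper's proof coincide; no computation of $\pi_{4}(B\gk(\cptwo))$, $\pi_{7}(B\gk(\cptwo))$, or the explicit effect of $(\Sigma\nu')^{\ast}$ is needed, since Proposition~\ref{ABgk} already supplies the order of the cokernel.
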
 

\begin{proof} 
By Theorems~\ref{oddpdecomp} and~\ref{Sp2gauge}~(a) it suffices to prove 
the $2$-components of the g.c.d. conditions. That is, it suffices to prove that 
if there is a $2$-local homotopy equivalence $\gk(\cptwo)\simeq\g_{\ell}(\cptwo)$ 
then $(4,k)=(4,\ell)$. 

Let 
\(e\colon\namedright{\gk(\cptwo)}{}{\g_{\ell}(\cptwo)}\) 
be a homotopy equivalence. Then for any path-connected space~$X$ 
there is an isomorphism $[\Sigma X,B\gk(\cptwo)]\cong [X,\gk(\cptwo)]$ 
which is natural for maps 
\(\namedright{\Sigma X}{\Sigma f}{\Sigma X'}\). 
Applying this to~(\ref{Anuexact}), together with the map $e$, gives a commutative diagram 
\begin{equation} 
  \label{Aquotient} 
  \diagram 
      [S^{4},B\gk(\cptwo)]\rto^-{(\Sigma\nu')^{\ast}}\dto^{\cong} 
           & [S^{7},B\gk(\cptwo)]\rto^-{q^{\ast}}\dto^{\cong} 
           & [A,B\gk(\cptwo)]\rto & 0 \\ 
      [S^{4},B\g_{\ell}(\cptwo)]\rto^-{(\Sigma\nu')^{\ast}} 
           & [S^{7},B\g_{\ell}(\cptwo)]\rto^-{q^{\ast}} & [A,B\g_{\ell}(\cptwo)]\rto & 0    
  \enddiagram 
\end{equation}  
where the top and bottom row are exact and the vertical isomorphisms are 
induced by adjunction and applying $e_{\ast}$. In the top row,  
$[A,B\gk(\cptwo)]$ bijects with $\mbox{coker}\,(\Sigma\nu')^{\ast}$, and in the bottom row,  
$[A,B\gk(\cptwo)]$ bijects with $\mbox{coker}\,(\Sigma\nu')^{\ast}$. Therefore there is a bijection 
between $[A,B\gk(\cptwo)]$ and $[A,B\g_{\ell}(\cptwo)]$. 

By Proposition~\ref{ABgk}, $[A,B\gk(\cptwo)]\cong\mathbb{Z}/(4,k)\,\mathbb{Z}$ 
and similarly $[A,B\g_{\ell}(\cptwo)]\cong\mathbb{Z}/(4,\ell)\,\mathbb{Z}$. Therefore 
the bijection between $[A,B\gk(\cptwo)]$ and $[A,B\g_{\ell}(\cptwo)]$ implies that $(4,k)=(4,\ell)$. 
\end{proof} 

\begin{proof}[Proof of Theorem~\ref{count}~(b)] 
By Theorem~\ref{Mgaugedecomp}, it suffices to prove the statement 
for the special cases $M=S^{4}$ and $M=\cptwo$. The $M=S^{4}$ 
case is the statement of Theorem~\ref{Sp2gauge}. Thus we are reduced to proving 
that if $\gk(\cptwo)$ is fibre homotopy equivalent to $\g_{\ell}(\cptwo)$ then $(40,k)=(40,\ell)$. 

Let $u_{k}$ and $v_{k}$ be the $2$-component of the integers $(20,k)$ and $(40,k)$ 
respectively. Observe that~$u_{k}$ can be $0, 1, 2$ or $4$. If $u_{k}<4$, then 
$v_{k}=u_{k}$ and so $(40,k)=(20,k)$; otherwise $v_{k}$ is either $4$ or $8$.

Since $\gk(\cptwo)$ is fibre homotopy equivalent to $\g_{\ell}(\cptwo)$ the two 
are also homotopy equivalent. Theorem~\ref{Acount} therefore implies that $(20, k)=(20, \ell)$. 
In particular, this implies that $u_{k}=u_{\ell}$. If $u_{k}<4$ then $(20,k)=(40,k)$ and 
similarly $(20,\ell)=(40,\ell)$. Therefore $(40,k)=(40,\ell)$, as asserted. If $u_{k}=4$ then 
$v_{k}$ is either $4$ or $8$; further $u_{k}=4$ implies that $u_{\ell}=4$ and so $v_{\ell}$ 
is either $4$ or $8$. If $v_{k}=v_{\ell}$ then we obtain $(40,k)=(40,\ell)$, as asserted. 
It remains, then, to deal with the case when $v_{k}=8$ and $u_{\ell}=4$ or vice-versa. 

Assume that $v_{k}=8$ and $v_{\ell}=4$ (the other case is similar). We will show 
that $\gk(\cptwo)$ cannot be fiber homotopy equivalent to $\g_{\ell}(\cptwo)$. Note 
that $k=8K$ and $\ell=4L$ where $K$ is some number and~$L$ is an odd number. 
Write $L=2L'+1$, giving $\ell=4L=8L'+4$. The definition of $\overline{\partial}_{k}$ 
as $\pi^{\ast}\circ\partial_{k}$ and Lemma~\ref{Lang} imply that 
\[\overline{\partial}_k =\pi^{\ast}\circ\partial_k\simeq\pi^{\ast}\circ 8K\circ\partial_1\qquad 
     \overline{\partial}_{\ell} =\pi^{\ast}\circ\partial_{\ell}\simeq\pi^{\ast}\circ (8L'+4)\circ\partial_1.\] 
Localize at $2$. By~\cite{Th2} the map $\partial_{1}$ has order~$8$. Therefore 
$\overline{\partial}_{k}$ is null homotopic and 
$\overline{\partial}_{\ell}\simeq\pi^{\ast}\circ 4\partial_{1}\simeq\pi^{\ast}\circ\partial_{4} 
    \simeq\overline{\partial}_{4}$. 
Let 
\(h\colon\namedright{\gk(\cptwo)}{}{\g_{\ell}(\cptwo)}\) 
be a fibre homotopy equivalence. Then there is a homotopy commutative diagram 
\[
  \diagram 
      \gk(\cptwo)\rto^-{r_k}\dto^{h} 
           & Sp(2)\rto^-{\overline{\partial}_k}\ddouble 
           & \mapstar(\cptwo,BSp(2))\\ 
      \g_{\ell}(\cptwo)\rto^-{r_{\ell}} 
           & Sp(2)\rto^-{\overline{\partial}_{\ell}} & \mapstar(\cptwo,BSp(2))
  \enddiagram 
\] 
where both rows are fibrations. 
Since $\overline{\partial}_k$ is null homotopic, the map $r_{k}$ has a right homotopy inverse 
$s:Sp(2)\to\gk(\cptwo)$. The homotopy commutativity of the square therefore implies that 
$r_{\ell}\circ h\circ s$ is a right homotopy inverse for $r_{\ell}$. This implies that 
$\overline{\partial}_{\ell}$ is null homotopic. However, $\overline{\partial}_{\ell}\simeq\overline{\partial}_{4}$ 
and Theorem~\ref{maporder} implies that $\overline{\partial}_{4}$ is nontrivial, a contradiction. 
Therefore the case $v_{k}=8$ and $v_{\ell}=4$ cannot occur. 

Hence in all cases, the fibre homotopy equivalence between $\gk(\cptwo)$ 
and $\g_{\ell}(\cptwo)$ implies that $(40,k)=(40,\ell)$. 
\end{proof}

\bibliographystyle{amsplain}

\end{document}